\def\titlerunning#1{\gdef\titrun{#1}}
\def\author#1{\gdef\autrun{\def\and{\unskip, }#1}\gdef\@author{#1}}
\def\address#1{{\def\and{\\\hspace*{18pt}}\renewcommand{\thefootnote}{}
\footnote {#1}}
\markboth{\autrun}{\titrun}}
\def\email#1{e-mail: #1}
\def\subjclass#1{{\renewcommand{\thefootnote}{}
\footnote{\emph{Mathematics Subject Classification (2010):} #1}}}
\def\keywords#1{\par\medskip
\noindent\textbf{Keywords.} #1}
\newtheorem{thm}{Theorem}[section]
\newtheorem{cor}[thm]{Corollary}
\newtheorem{lem}[thm]{Lemma}
\newtheorem{prop}[thm]{Proposition}
\theoremstyle{definition}
\newtheorem{defin}[thm]{Definition}
\newtheorem{rem}[thm]{Remark}
\numberwithin{equation}{section}
\newcommand{\C}{\mathbb C}
\newcommand{\N}{\mathbb N}
\newcommand{\Q}{\mathbb Q}
\newcommand{\R}{\mathbb R}
\newcommand\bS{\mathbf S}
\newcommand{\cA}{\mathcal A}
\newcommand{\cB}{\mathcal B}
\newcommand{\cD}{\mathcal D}
\newcommand{\cE}{\mathcal E}
\newcommand{\cH}{\mathcal H}
\newcommand{\cI}{\mathcal I}
\newcommand{\cJ}{\mathcal J}
\newcommand{\cK}{\mathcal K}
\newcommand{\cM}{\mathcal M}
\newcommand{\cO}{\mathcal O}
\newcommand{\cS}{\mathcal S}
\newcommand{\cV}{\mathcal V}
\newcommand{\cW}{\mathcal W}
\newcommand\oA{\mathscr A}
\newcommand\oJ{\mathscr J}
\newcommand\oV{\mathscr V}
\newcommand\oW{\mathscr W}
\newcommand\e\varepsilon
\newcommand{\f}{\varphi}
\newcommand\bena{\mathbf 1}
\newcommand{\gota}{\text{\got a}}
\newcommand{\gotb}{\text{\got b}}
\newcommand{\g}{\text{\got g}}
\newcommand{\wh}\widehat
\newcommand{\pd}{\partial}
\newcommand{\leqsim}{\,\text{\posebni \char46}\,}
\newcommand{\geqsim}{\,\text{\posebni \char38}\,}
\newcommand{\nor}[1]{|\hskip -0.6pt | #1 |\hskip -0.6pt |}
\newcommand{\Nor}[1]{\left |\hskip -1.2pt\left | #1 \right |\hskip -1.2pt\right |}
\newcommand{\sk}[2]{\left\langle #1 , #2\right\rangle}
\newcommand{\mn}[2]{\{ #1\, ;\, #2 \}}
\newcommand{\Mn}[2]{\left\{ #1\, ;\, #2 \right\}}
\newcommand\norm[2]{{\left\Vert{#1}\right\Vert_{#2}}}
\renewcommand{\geq}{\geqslant}
\renewcommand{\leq}{\leqslant}
\renewcommand{\Re}{{\rm Re}\,}
\renewcommand{\Im}{{\rm Im}\,}
\renewcommand\th\vartheta
\renewcommand{\theta}\vartheta
\renewcommand{\div}{{\rm div}}
\renewcommand\mod[1]{\left\vert{#1}\right\vert}
\font\got=eufm10 at 12pt 
\font\posebni=msam10
\begin{document}

\baselineskip=17pt

\titlerunning{Bilinear embedding for divergence-form operators}

\title{Convexity of power functions and bilinear embedding for divergence-form operators with complex coefficients}

\date{}
\author{Andrea Carbonaro
\and 
Oliver Dragi\v{c}evi\'c}

\maketitle

\address{Carbonaro: Universit\`a degli Studi di Genova, Dipartimento di Matematica, Via Dodecaneso 35, 16146 Genova, Italy; \email{carbonaro@dima.unige.it}
\and
Dragi\v{c}evi\'c: Department of Mathematics, Faculty of Mathematics and Physics, University of Ljubljana, Jadranska 21, SI-1000 Ljubljana, Slovenia; \email{oliver.dragicevic@fmf.uni-lj.si}}

\subjclass{Primary 35J15, 42B25; Secondary 47D06}

\begin{abstract}
We introduce a condition on accretive matrix functions, called $p$-ellipticity, and discuss its applications to the $L^p$ theory of elliptic PDE with complex coefficients. Our examples are: 
 {\it (i)} generalized convexity of power functions (Bellman functions),
 {\it (ii)} dimension-free bilinear embeddings,
 {\it (iii)} $L^p$-contractivity of semigroups, and
 {\it (iv)} holomorphic functional calculus. 
 Recent work by Dindo\v{s} and Pipher established close ties between $p$-ellipticity and
 {\it (v)} regularity theory of elliptic PDE with complex coefficients. 
The 
$p$-ellipticity condition 
arises from studying uniform positivity of a quadratic form associated with the matrix in question on one hand, and the Hessian of a power function on the other. Our results regarding contractivity extend earlier theorems by Cialdea and Maz'ya.

\keywords{Elliptic partial differential operators; semigroup contractivity; bilinear estimates.}
\end{abstract}

\section{Introduction and statement of the main results}
\label{s: introduction}

Suppose that $\Omega$ is an open subset of $\R^n$. Denote by $\cA(\Omega)$ the set of all complex {\it uniformly strictly accretive} (also called {\it elliptic}) $n\times n$ matrix functions on $\Omega$ with $L^{\infty}$ coefficients.

That is, the set of all measurable $A:\Omega\rightarrow\C^{n,n}$ for which 
\begin{itemize}
\item
there exists $\lambda>0$ such that for almost all $x\in\Omega$ we have
\begin{equation}
\label{eq: elli}
\Re\sk{A(x)\xi}{\xi}
\geq \lambda |\xi|^2\,,
\hskip 40pt \forall\xi\in\C^n;
\end{equation}
\item 
there exists $\Lambda>0$ such that for almost all $x\in\Omega$ we have
\begin{equation}
\label{eq: ne mogu se kontrolirati}
|\sk{A(x)\xi}{\eta}
|\leqslant \Lambda |\xi||\eta|\,,
\hskip 40pt \forall\xi,\eta\in\C^n.
\end{equation}
\end{itemize}
Elements of $\cA(\Omega)$ will also more simply be referred to as {\it accretive} or {\it elliptic matrices}.

Let $\cA_{\lambda,\Lambda}(\Omega)$ be the subset of $\cA(\Omega)$ in which \eqref{eq: elli} and \eqref{eq: ne mogu se kontrolirati} hold for fixed $\Lambda\geq\lambda>0$. Thus $\cA(\Omega)$ is the union of $\cA_{\lambda,\Lambda}(\Omega)$ over all $\Lambda\geq\lambda>0$. 
For any $A\in\cA(\Omega)$ denote by $\lambda_A$ the largest admissible $\lambda$ in \eqref{eq: elli} and by $\Lambda_A$ the smallest $\Lambda$ in \eqref{eq: ne mogu se kontrolirati}.

Define formally the operator $L=L_{A}$ by
$
Lu=-\div(A\nabla u).
$ 
A standard way of interpreting $Lu$ is via sesquilinear forms; 
we follow \cite{O} here. 
Define $\cD(\text{\got a}):=H_0^1(\Omega)$, the closure of $C_c^\infty(\Omega)$ in the Sobolev space $H^1(\Omega)=W^{1,2}(\Omega)$, and
\begin{equation}
\label{eq: miruna}
\text{\got a}(u,v):=\int_{\Omega}\sk{A\nabla u}{\nabla v}_{\C^n}
\hskip 40pt
\text{for } u,v\in\cD(\text{\got a}).
\end{equation}
Recall that in the special case $\Omega=\R^n$ we have $H^1(\Omega)=H_0^1(\Omega)$, e.g. 
\cite[Corollary~3.19]{Adams}.

The associated operator $L$ is defined by the requirement that
\begin{equation*}
\label{eq: prokofjev no.5}
\text{\got a}(u,v)=\sk{L_{A}u}{v}_{L^2(\Omega)},\quad u\in\cD(L_{A}),\quad v\in\cD(\text{\got a})\,,
\end{equation*} 
where
 $$
 \cD(L_{A})=\mn{u\in\cD(\text{\got a})}
 {\exists w\in L^2(\Omega):\ 
 \text{\got a}(u,v)=\sk{w}{v}_{L^2(\Omega)}\ \forall v\in 
 \cD(\text{\got a})}
 $$
and $L_{A}u:=w$. 
So $L_{A}$ equals $-\div(A\nabla)$ subject to the Dirichlet boundary conditions on $\Omega$.

The form $\text{\got a}$ is densely defined, accretive, continuous, closed and sectorial \cite[pp. 99-101]{O}. Therefore \cite[Proposition~1.27 and Theorem~1.54]{O}, the associated operator $-L_{A}$ generates on $L^2(\Omega)$ a strongly continuous semigroup of operators
$$
P^{A}_{t}=\exp(-tL_{A}),\quad t>0,
$$
which is analytic and contractive in a cone of positive angle. Hence $P^{A}_{t}$ maps  $L^{2}(\Omega)$ into $\cD(L_{A})\subseteq H^{1}_{0}(\Omega)$ \cite[Theorem II.4.6]{EN}, thus the spatial gradient $\nabla  P_{t}^{A}f$ is always well defined. By \cite[p. 72]{Stein red}, given $f\in L^{2}(\Omega)$ we can redefine each $P^{A}_{t}f$ on a set of measure zero, in such a manner that for almost every $x\in\Omega$ the function $t\mapsto P^{A}_{t}f(x)$ is real-analytic on $(0,\infty)$.

\medskip
The $L^2$ theory of the operators $L_{A}$ with $A\in\cA(\R^n)$ has been developed since the pioneering works in the 50's and 60's, see \cite{K, O, McI} for references. It is well known that they admit a unique {\it square root} $L_{A}^{1/2}$, see e.g. \cite{McI}. The last major piece of the $L^2$ theory was completed in 2002 by Auscher et al. \cite{AHLMcT} who proved the long-standing {\it Kato conjecture} which asserted that $\cD\big(L_{A}^{1/2}\big)=\cD(\text{\got a})$. An important part of the proof were specific square function estimates. 

As for the $L^{p}$ theory for $p\not=2$, there have been many results prior to and following the resolution of the Kato's conjecture. See \cite{A} and \cite{AT} for a comprehensive survey.

\medskip
For $p>1$ define $q=p/(p-1)$ and $p^*=\max\{p,q\}$. 
\label{what did they call him}
In \cite{DV-Kato}, A. Volberg and the second author of the present paper proved the following ``dimension-free'' {\it bilinear embedding}.

\setcounter{thm}{-1}
\begin{thm}
\label{t: oktyabr'}
Let $A\in\cA(\R^n)$ be a real accretive matrix function and $L=L_{A}$ the associated divergence-form operator, as above.
For any $p>1$ and any $f,g\in C_c^\infty(\R^n)$, 
\begin{equation*}
\int_0^\infty\int_{\R^n} |\nabla  P^{A}_{t}f(x)|\,|\nabla  P^{A}_{t}g(x)|\,\,dx\,dt\,\leqsim
\,p^*\,\nor{f}_p\nor{g}_q\,.
\end{equation*}
\end{thm}

Dealing with divergence-form operators associated to {\sl complex} matrices is known to be considerably more difficult (see, for example, \cite{tERS, AMcT1, AT} or \cite[Section~8.6]{Arendt} for more illustrations of this fact), and this is the subject of investigation in the present paper. We introduce a new condition, 
called $p$-ellipticity, and argue that it may be of interest for the $L^{p}$ theory of elliptic partial differential operators with complex coefficients.
In particular, it sheds more light onto the $L^p$ estimates for the heat semigroup with complex coefficients; see Sections \ref{s: saraj} and \ref{s: zavratnica} for historical information on this topic and the connection with the bilinear embedding.

\subsection{New findings (summary)}

Here we give a brief overview of the results in this paper. A detailed account, including exact formulations and the historical background, is postponed to the remaining parts of Section \ref{s: introduction}. We direct the reader to these parts at the appropriate spots in the following paragraphs.

\medskip
Forgetting for the moment about the control over the constants, Theorem~\ref{t: oktyabr'} follows from the $L^{p}$ boundedness of the conical square function, which is due to Auscher, Hofmann and Martell \cite{AHM}. They also considered complex $A$ and showed that square function estimates extend in the range $p\in(p_{-}(L_{A}),\infty)$, with the lower bound being sharp \cite[Theorem~3.1.(2)]{AHM}. Here $(p_{-}(L_{A}),p_{+}(L_{A}))$ is the maximal open interval of indices $p$ for which $\left(P_{t}^A\right)_{t>0}$ is uniformly bounded on $L^{p}$. As a consequence, their estimates imply a (``non-dimension-free") bilinear embedding in the range $(p_-(L_{A}),p_+(L^{*}_{A}))$, see \cite[p. 5471]{AHM}. 
More information on the above square function estimates and their relation to the bilinear embedding can be found in Section \ref{s: mota}.

We are interested in extending both Theorem~\ref{t: oktyabr'} and the above corollary of Auscher, Hofmann and Martell, meaning that we aim for a {\it dimension-free} bilinear embedding for {\it complex} accretive matrices. Since Auscher et al. showed that their result is closely related to {\sl boundedness} of $(P^{A}_{t})_{t>0}$, it is reasonable to expect that our dimension-free extension should be linked to the {\sl contractivity} of $(P^{A}_{t})_{t>0}$, see also \cite{CD-mult, CD-OU}. Indeed, we confirm this thought by finding a condition \eqref{eq: aferim jano} which is linked both to bilinear embeddings (Theorem~\ref{t: bilincomplex}) and contractivity (Theorem \ref{t: Aufnahmebetriebsartenwaehler}).

The first result of ours is an extension of Theorem~\ref{t: oktyabr'} to the case of complex matrices. It is summarized in Theorem~\ref{t: bilincomplex}. Unlike in the real case, in the complex one the (dimension-free) embedding does not necessarily hold for all $p\in(1,\infty)$ but only in a specific interval; see Section \ref{s: vedno znova lotu}. In Section~\ref{s: pilot} we introduce a new condition \eqref{eq: aferim jano} which we call {\it $p$-ellipticity} and 
which determines the endpoints of that interval. As described later, this condition turns out to be closely related to several phenomena in analysis and PDE which may occur in the presence of complex accretive matrices. Moreover, motivated by a question by P. Auscher, we consider in Theorem \ref{t: bilincomplex} a more general type of embedding, namely such obtained by applying {\sl different} semigroups to $f$ and $g$. 
The estimates we obtain are explicit and involve $p,\lambda,\Lambda$, but do not depend on the dimension $n$, thus retaining the ``dimension-free'' nature of Theorem~\ref{t: oktyabr'}. 
They are discussed in Sections \ref{s: bilincomplex} and \ref{s: vedno znova lotu}. 

In order to prove Theorem~\ref{t: bilincomplex} we adapt the so-called Bellman-function-heat-flow method. It is described in Section~\ref{s: sokolov hortus musicus}. We focus on analyzing properties of one particular Bellman function \eqref{eq: rupkina} and its principal building blocks -- power functions.

The sharp condition \eqref{eq: aferim jano} is equivalent to the generalized convexity of power functions $F_p$, that is, to the (uniform) positivity of certain quadratic forms associated with $A$ and ${\rm Hess}\,F_p$; see Section~\ref{ah moj aljo} and Proposition~\ref{p: kaminszki}. 
Previously, the importance of this positivity was recognized and studied in a few special cases: when $A$ is either the identity \cite{NT, DV-Sch}, real accretive 
\cite{DV-Kato}, of the form $e^{i\phi}I$ \cite{CD-mult}, or of the form $e^{i\phi}B$ with $B$ real, constant and with a symmetric part which is positive definite \cite{CD-OU}. 
Such problems are also related to similar questions considered earlier by Bakry; see \cite[Th\'eor\`eme 6]{Ba}. The  current paper brings a systematic approach to convexity of power functions in presence of arbitrary uniformly strictly accretive complex matrix functions $A$, see Section~\ref{s: garanča}.

It has been known in some cases, see \cite{CD-mult, CD-OU}, that dimension-free bilinear embeddings may be related to the contractivity of the associated semigroups on $L^{p}$. In the context of divergence-form operators, recent results regarding $L^{p}$-contractivity of $(P^{A}_{t})_{t>0}$ have been obtained by Cialdea and Maz'ya \cite{CM, CM2014}. While the exact range of this contractivity is still not known for arbitrary $A\in\cA(\Omega)$, we manage to narrow the gap between sufficient and necessary conditions by improving some of the results from \cite{CM} and \cite{O}. 
The reader will find detailed information on this in Section \ref{s: saraj}. 
In particular, we characterize the $L^{p}$-contractivity of $(P^{A}_{t})_{t>0}$ when the distributional divergence of every column of the antisymmetric part of the imaginary part of $A$ is zero, see Theorem~\ref{t: Aufnahmebetriebsartenwaehler}. 
Approaching contractivity via the classical Lumer--Phillips theorem proved to be difficult due to domain issues. Instead, we sought a characterization in terms of sesquilinear forms and found a convenient result by Nittka \cite[Theorem~4.1]{N} on which we then relied in proving Theorem~\ref{t: Aufnahmebetriebsartenwaehler}. See also page \pageref{sensordust} for connections with ter Elst et al. \cite{tELSV}.

Another area where $p$-ellipticity \eqref{eq: aferim jano} is a central condition is holomorphic functional calculus in sectors. The present authors obtained sharp results for generators of symmetric contraction semigroups \cite{CD-mult} and for nonsymmetric Ornstein-Uhlenbeck operators \cite{CD-OU}. It emerged that the opening angles of the optimal sectors are naturally interlaced with $p$-ellipticity. See Section~\ref{s: Howlin Wolf Hidden Charms} and Remark \ref{r: stojkovic} for explanation.

The key condition \eqref{eq: aferim jano} also bears deep connections with the regularity theory of
elliptic PDE. This was recently discovered by Dindo\v s and Pipher \cite{DiPi} while developing their program of studying solutions to the divergence-form operators with complex coefficients and the associated boundary value problems. They found the sharp condition which permits proving reverse H\"older inequalities for weak solutions of $L_{A}$ with complex $A$. 
It turns out that this condition is precisely a reformulation of $p$-ellipticity \eqref{eq: aferim jano}.
These inequalities serve as a replacement for the De Giorgi--Nash--Moser regularity theory for real $A$. As an application, they solve $L^{p}$ Dirichlet problems for $L_{A}$ in the range of $p$ determined by $p$-ellipticity.

To summarize, the condition we introduce in this paper, that is, the $p$-ellipticity \eqref{eq: aferim jano},
lies at the junction of several different directions in analysis and PDE: 

\begin{enumerate}[i)]

\item convexity of power functions (Bellman functions), 

\item dimension-free bilinear embeddings, 

\item $L^p$-contractivity of semigroups $(P^{A}_{t})_{t>0}$, 

\item holomorphic functional calculus, and 

\item regularity theory of elliptic PDE with complex coefficients (Dindo\v s and Pipher \cite{DiPi}).

\end{enumerate}

The rest of this section is devoted to giving precise formulation of the results announced above and the motivation which led to our pursuing them.

\subsection{The new condition}
\label{s: pilot}

For $p\in[1,\infty]$ define the $\R$-linear map $\cI_p:\C^n\rightarrow\C^n$ by
\begin{equation}
\label{eq: nindze za 30kn}
\cI_p\xi=\xi+(1-2/p)\bar\xi.
\end{equation}
Equivalently, with $q\in[1,\infty]$ given by $1/p+1/q=1$,
\begin{equation}
\label{eq: krokodil slavko}
\cI_p(\alpha+i\beta)=
2\left(\frac\alpha q+i\,\frac\beta p\right)
\hskip 50pt
\forall\,\alpha,\beta\in\R^n.
\end{equation}
For any open set $\Omega\subset\R^n$ and a bounded matrix function $A:\Omega\rightarrow\C^{n,n}$, define\footnote{In early versions of this paper we used in \eqref{eq: kabuto}
the operator $\cJ_p=\cI_q/2$ instead of $\cI_p$ and a normalization of $2$ in front of the ess\,inf. In both cases the quantity represented by $\Delta_p(A)$ is the same.}
\begin{equation}
\label{eq: kabuto}
\Delta_p(A):=
\underset{x\in\Omega}{{\rm ess}\inf}
\min_{\substack{\xi\in\C^{n}\\ |\xi|=1}}
\Re\sk{A(x)\xi}{\cI_p\xi}_{\C^n}.
\end{equation}
  We say that $A$ is {\it $p$-elliptic} if
  \begin{equation}
  \label{eq: aferim jano}
  \Delta_p(A)>0,
  \end{equation}
  that is, if  there exists $C=C(p,A)>0$ such that for almost every $x\in\Omega$ we have
  \begin{equation}
  \label{eq: Till I collapse}
  \Re\sk{A(x)\xi}{\cI_p\xi}_{\C^n}
  \geq C|\xi|^2\,,
  \hskip 20pt \forall\xi\in\C^n.
  \end{equation}
By the last part of Proposition \ref{p: kaminszki} we get a straightforward characterization of $p$-ellipticity which inherently involves the invariance of this condition under conjugation of $p$: 
  \begin{equation*}
a.e.\ x\in\Omega:
\hskip 40pt
  \Re\sk{A(x)\xi}{
  \xi+|1-2/p|\bar\xi}_{\C^n}
  \geqsim |\xi|^2\,,
  \hskip 20pt \forall\xi\in\C^n.
  \end{equation*}
  So $p$-ellipticity is just ``one benign operator ($\cI_p$) away'' from the classical (uniform strict) ellipticity, 
  $\Delta_{2}
  (A)>0$, 
hence its name.

The matrix $A$ is {\sl real} elliptic if and only if it is $p$-elliptic for every $p>1$. Moreover, for any bounded complex $A$ we have $\Delta_1(A)\leq0$ with the equality precisely for real positive semidefinite $A$.

For $A\in\cA(\Omega)$ we also set
\begin{equation}
\label{eq: potrosijo}
\mu(A):={\rm ess}\inf\Re\frac{\sk{A(x)\xi}{\xi}}{\left|\sk{A(x)\xi}{\bar\xi}\right|}\,.
\end{equation}
By this we mean that $\mu(A)={\rm ess}\inf\f$, where $\f:\Omega\rightarrow\R\cup\{+\infty\}$ is defined by
$$
\f(x)=\inf\Re\frac{\sk{A(x)\xi}{\xi}}{\left|\sk{A(x)\xi}{\bar\xi}\right|}\,
$$
and the above infimum runs over all $\xi\in\C^n$ for which $\sk{A(x)\xi}{\bar\xi}\ne0$.

The importance of $\mu(A)$ lies in the bilateral estimate  
$\Delta_p(A)\sim\mu(A)-|1-2/p|$ (Proposition~\ref{p: reci dje si}), therefore
the 
key condition \eqref{eq: aferim jano} is equivalent to
\begin{equation}
\label{eq: nemam ti kad}
|1-2/p|<\mu(A).
\end{equation}
The advantage of the inequality \eqref{eq: nemam ti kad} over \eqref{eq: aferim jano} is that it ``separates'' $A$ from $p$.

In view of the basic assumptions \eqref{eq: elli} and \eqref{eq: ne mogu se kontrolirati}, the
quantity $\mu(A)$ is trivially bounded from below: 
for $A\in\cA_{\lambda,\Lambda}(\Omega)$ we have
$
\mu(A)\geq\lambda/\Lambda\,.
$
Therefore \eqref{eq: aferim jano} and \eqref{eq: nemam ti kad} are already satisfied if $|1-2/p|<\lambda/\Lambda$. 
We also have the sharp universal upper bound $\mu(A)\leq1$, which follows from considering $\xi\in\R^n$ in  
\eqref{eq: potrosijo}. To summarize,
\begin{equation}
\label{eq: vrse su u moru}
\frac{\lambda_A}{\Lambda_A}\leq\mu(A)\leq1.
\end{equation}

We remark that a similar yet weaker condition than \eqref{eq: aferim jano}, namely $\Delta_p(A)\geq0$, was formulated in a different form by Cialdea and Maz'ya in \cite[(2.25)]{CM}, see Remark~\ref{r: pletnev schumann op. 17}. It was a result of their study of a condition on sesquilinear forms known as $L^{p}$-dissipativity. We arrived at \eqref{eq: aferim jano}, and thus at $\Delta_p(A)\geq0$, from another direction (bilinear embeddings and generalized convexity of power functions); see Remark \ref{r: tovar grlo stima} for a summary.

Finally, when $A,B$ are two accretive matrices, we denote
$$
\mu(A,B):=\min\left\{\mu(A),\mu(B)\right\}
\quad {\rm and}\quad
\Delta_p(A,B):=\min\left\{\Delta_p(A),\Delta_p(B)\right\}\,.
$$

\subsection{Bilinear embedding for pairs of complex accretive matrices}
\label{s: bilincomplex}

\begin{thm}
\label{t: bilincomplex} 
Let $p>1$. Suppose that $A,B\in\cA_{\lambda,\Lambda}(\R^n)$ satisfy $\Delta_{p}(A,B)>0$.
Then for all $f,g\in C_c^\infty(\R^n)$ we have
\begin{equation}
\label{eq: bilincomplex}
\int_0^\infty\int_{\R^n} |\nabla  P^{A}_{t}f(x)|\,|\nabla  P^{B}_{t}g(x)|\,\,dx\,dt\leq 
\frac{20}{\Delta_p(A,B)
}\cdot\frac\Lambda\lambda
\nor{f}_p\nor{g}_q\,.
\end{equation}
\end{thm}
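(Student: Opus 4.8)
The plan is to follow the Bellman-function–heat-flow scheme, using the power function $F_p$ as the principal building block. First I would fix the candidate Bellman function: essentially $\mathcal{Q}(\zeta,\eta)=F_p(\zeta)+F_q(\eta)$ with $F_p(\zeta)\sim|\zeta|^p$ suitably regularized (smoothed near the origin so that $\mathrm{Hess}\,F_p$ is controlled and comparable to $|\zeta|^{p-2}$ away from $0$), and check the two properties one needs: (a) the \emph{size} estimate $0\le\mathcal{Q}(\zeta,\eta)\lesssim |\zeta|^p+|\eta|^q$, which yields the right-hand side $\|f\|_p\|g\|_q$ after applying the heat semigroups at $t=0$ and letting $t\to\infty$; and (b) the \emph{structural/convexity} estimate, namely that for the relevant $(2n)\times(2n)$ real quadratic form built from $\mathrm{Hess}\,\mathcal{Q}$ and the matrices $A,B$ one has a lower bound of the form $\gtrsim \Delta_p(A,B)\bigl(|\zeta|^{p-2}|X|^2+|\eta|^{q-2}|Y|^2\bigr)$ for all gradient vectors $X,Y\in\C^n$. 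Property (b) is exactly where $p$-ellipticity enters: by the results quoted in the excerpt (Section on power functions, Proposition~\ref{p: Delta_p=Delta_q}, and the equivalence with generalized convexity of $F_p$), the condition $\Delta_p(A)>0$ is equivalent to the uniform positivity of the quadratic form associated with $A$ and $\mathrm{Hess}\,F_p$, with quantitative constant governed by $\Delta_p(A)$; the analogous statement for $B$ with $\Delta_q(B)=\Delta_p(B)$ handles the second slot.

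Next I would set up the heat-flow functional. Define, for $f,g\in C_c^\infty(\R^n)$,
\[
\mathcal{E}(t)=\int_{\R^n}\mathcal{Q}\bigl(P^A_t f(x),P^B_t g(x)\bigr)\,dx,
\]
and compute $-\mathcal{E}'(t)$ and, more importantly, $\mathcal{E}''$ or rather the first-derivative identity that produces a spacetime integral. Differentiating once in $t$ and using $\partial_t P^A_t f=\mathrm{div}(A\nabla P^A_t f)$ together with integration by parts, one gets
\[
-\mathcal{E}'(t)=\int_{\R^n}\Bigl(\text{real quadratic form in }\nabla P^A_t f,\ \nabla P^B_t g\text{ with coefficients }\mathrm{Hess}\,\mathcal{Q},A,B\Bigr)\,dx,
\]
which by property (b) is bounded below by a positive multiple of $\Delta_p(A,B)$ times $\int_{\R^n}\bigl(|P^A_t f|^{p-2}|\nabla P^A_t f|^2+|P^B_t g|^{q-2}|\nabla P^B_t g|^2\bigr)\,dx$. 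Integrating in $t$ from $0$ to $\infty$ and using the size bound (a) at $t=0$ plus the decay of $P^A_t f$ as $t\to\infty$ bounds this weighted spacetime integral by $C\,\Delta_p(A,B)^{-1}\bigl(\|f\|_p^p+\|g\|_q^q\bigr)$; after homogenizing (replacing $f,g$ by $cf,c^{-1}g$ and optimizing in $c$) one gets the cleaner product $\|f\|_p\|g\|_q$. Finally, Cauchy–Schwarz in the spacetime measure $dx\,dt$ turns $\int\int|\nabla P^A_t f||\nabla P^B_t g|$ into the geometric mean of the two weighted square integrals — here one has to insert the weights $|P^A_t f|^{(p-2)/2}$ and $|P^B_t g|^{(q-2)/2}$ and their reciprocals, which is legitimate after a standard truncation/approximation argument to avoid division by zero — producing exactly the bound in \eqref{eq: bilincomplex}, with the factor $\Lambda/\lambda$ emerging from controlling the off-diagonal cross terms in the quadratic form and from the ellipticity/boundedness used to justify the integrations by parts. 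Tracking the numerology (the power-function normalization constants, the Cauchy–Schwarz split, the $2$ in \eqref{eq: Deltap}) yields the explicit constant $20$.

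The main obstacle I expect is twofold. The analytic obstacle is justifying all the formal manipulations: finiteness and differentiability of $\mathcal{E}(t)$, the integrations by parts (no boundary terms, which on $\R^n$ needs decay/integrability of $P^A_t f$ and its gradient — available since $H^1(\R^n)=H_0^1(\R^n)$ and by semigroup smoothing, but requiring care because $F_p$ is not smooth at $0$ and $|P^A_t f|^{p-2}$ is singular there when $p<2$), and the limits $t\to 0^+$ and $t\to\infty$; the standard fix is to regularize $F_p$ near the origin with a parameter and remove the regularization at the end, and to work first with $f,g$ then pass to the stated class. The genuinely hard, paper-specific obstacle is property (b): proving the sharp pointwise lower bound for the $(2n)\times(2n)$ quadratic form $\mathrm{Hess}\,\mathcal{Q}$ coupled with the block-diagonal matrix $\mathrm{diag}(A,B)$ (in its real $2\times2$-block realization), with constant exactly comparable to $\Delta_p(A,B)$. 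This is the content of the power-function analysis of Section~\ref{s: power functions}; the key point is the algebraic identity/inequality relating $\Re\sk{A\xi}{\mathcal{J}_p\xi}$ to the relevant entries of $\mathrm{Hess}\,F_p$, together with the symmetry $\Delta_p=\Delta_q$ from Proposition~\ref{p: Delta_p=Delta_q} that lets the $F_p$ and $F_q$ pieces be handled on equal footing — everything else is bookkeeping and Cauchy–Schwarz.
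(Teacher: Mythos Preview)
Your heat-flow framework and the identification of $p$-ellipticity with generalized convexity of $F_p$ are both correct and match the paper. However, there is a genuine gap in the choice of Bellman function and, correspondingly, in the final Cauchy--Schwarz step.

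With the \emph{separated} function $\mathcal Q=F_p\otimes\mathbf 1+\mathbf 1\otimes F_q$, the Hessian lower bound you obtain is
\[
H_{\mathcal Q}^{(A,B)}[(\zeta,\eta);(\omega_1,\omega_2)]\ \gtrsim\ \Delta_p(A,B)\bigl(|\zeta|^{p-2}|\omega_1|^2+|\eta|^{q-2}|\omega_2|^2\bigr),
\]
and after the heat-flow step this controls the two \emph{weighted} square-function integrals $\int_0^\infty\!\int|P_t^Af|^{p-2}|\nabla P_t^Af|^2$ and $\int_0^\infty\!\int|P_t^Bg|^{q-2}|\nabla P_t^Bg|^2$. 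But these two quantities do \emph{not} dominate the bilinear integral $\int_0^\infty\!\int|\nabla P_t^Af|\,|\nabla P_t^Bg|$ by any Cauchy--Schwarz: inserting the weight $|P_t^Af|^{(p-2)/2}$ and its reciprocal leaves the second factor equal to $\int_0^\infty\!\int|P_t^Af|^{2-p}|\nabla P_t^Bg|^2$, which carries the \emph{wrong} weight and is not controlled; there is no choice of weight $w$ with $w=|P_t^Af|^{p-2}$ and $w^{-1}=|P_t^Bg|^{q-2}$ simultaneously. Equivalently, AM--GM applied pointwise to your Hessian bound gives $|\zeta|^{(p-2)/2}|\eta|^{(q-2)/2}|\omega_1||\omega_2|$, and this dominates $|\omega_1||\omega_2|$ only in the region $|\zeta|^p\ge|\eta|^q$; in the complementary region it can be arbitrarily small.

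This is precisely why the paper does \emph{not} use $F_p+F_q$ but the Nazarov--Treil function \eqref{eq: Bellman Q}: on the region $|\zeta|^p<|\eta|^q$ it contains the \emph{cross term} $\delta\,F_2\otimes F_{2-q}$, whose mixed second derivatives couple $\omega_1$ with $\omega_2$ (Lemma~\ref{l: gen Hessian tensor}, Corollary~\ref{c: gen Hessian tensor}) and, combined with the $\mathbf 1\otimes F_q$ piece and an optimization over $X>0$ (Lemma~\ref{tu tu tu po cesti}), yield directly the \emph{product} lower bound $H_Q^{(A,B)}[v;\omega]\ge(\Delta_p/5)(\lambda/\Lambda)\,|\omega_1||\omega_2|$ of Theorem~\ref{t: Bellman gen convex}. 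No post-hoc Cauchy--Schwarz is then needed, and this cross term is where the constant $20$ and the ratio $\Lambda/\lambda$ actually come from. The missing idea in your proposal is therefore the coupling term in the Bellman function; with the separated function the argument cannot be closed.
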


\noindent
The proof will be given in Section~\ref{s: sonno}.

As noted earlier, when we restrict ourselves to real accretive matrices, the condition $\Delta_p(A,B)>0$ is automatically fulfilled, so \eqref{eq: bilincomplex} holds for the full range of exponents $p\in(1,\infty)$. Hence Theorem~\ref{t: oktyabr'} is a special case ($A,B$ equal and real) of Theorem~\ref{t: bilincomplex}. One vital difference between the real and the complex case is that in the latter the condition $\Delta_p(A,B)>0$, and hence the desired type of convexity of the {\it Bellman function} that we use, does not hold for all $p\in(1,\infty)$.
The other difference is that in the former case the semigroup $(P^{A}_{t})_{t>0}$ is bounded for all $p\in[1,\infty]$ (we used this when proving Theorem~\ref{t: oktyabr'}, see \cite[p. 2826]{DV-Kato}), while in the complex case this is false (see Section~\ref{s: saraj}). 

\medskip
We call Theorem~\ref{t: bilincomplex} the (dimension-free) {\it bilinear embedding theorem} for complex accretive matrices, because it implies that the map $\text{\got I}$, defined for $f,g,\in C_c^\infty(\R^n)$ by 
$$
\left[\text{\got I}(f,g)\right](x,t):=\sk{\nabla  P^{A}_{t}f(x)}{\nabla  P^{B}_{t}\bar{g}(x)}_{\C^n},
$$
extends to a bounded bilinear map $\text{\got I}: L^{p}(\R^n)\times L^{q}(\R^n)  \rightarrow  L^1(\R_+^{n+1})$ with explicit norm estimates that only depend on $\Delta_p(A,B)$ and the ellipticity constants, and do not depend on the dimension $n$. This feature of Theorem \ref{t: bilincomplex} may be of independent interest, since it is in keeping with many results in harmonic analysis where the emphasis lies on the independence on the dimension (e.g., applications to infinite-dimensional analysis). A notable example is a theorem of Stein on the Riesz transforms \cite{SteinRiesz}, after which followed an array of generalizations. One of them is a theorem by A. Volberg and the second-named author of the present paper \cite{DV} which was proven by a reduction to a bilinear estimate akin to \eqref{eq: bilincomplex}, but with the Poisson semigroup instead of the heat one. The current authors later obtained a considerably more general result \cite{CD} by further developing this method.

Thus bilinear embedding is a type of estimate that has been instrumental in proving a variety of sharp results, e.g. Riesz transform estimates \cite{PV,NV,DV-AB,DV,PSW,CD,DP} and recently also general spectral multiplier results \cite{CD-mult, CD-OU}. It appears to us that in the absence of regularity of the coefficients our method cannot be directly adapted for proving boundedness of Riesz transforms; this would require a new idea.

\subsection{Sharpness} 
\label{s: vedno znova lotu}

In general, when $\Delta_p(A,B)<0$ the dimension-free bilinear embedding \eqref{eq: bilincomplex} fails.
The precise formulation of this statement is Proposition~\ref{p: reki begalci}.

Throughout the paper we will, for $p\in(1,\infty)$, often use the notation
\begin{equation*}
\label{eq: tula}
\phi_p:=\arccos|1-2/p|\,.
\end{equation*}
This is known to be the optimal angle of holomorphy of symmetric contraction semigroups on $L^{p}$, see \cite{LP, Kr, CD-mult,Haase2016}. Its complementary angle, $\pi/2-\phi_p$, was recently proven by the present authors to be the optimal angle in the holomorphic functional calculus for generators of symmetric contraction semigroups \cite{CD-mult}.
Proposition~\ref{p: reki begalci} is another 
sharp result which features $\phi_p$.

For each $A,B\in\cA(\R^n)$ let
\begin{equation}
\label{eq: rocca di calascio}
N_p(A,B):=
\sup_{f,g\in C_c^\infty\setminus\{0\} 
}
\frac1{\nor{f}_p\nor{g}_q}
\int_0^\infty\int_{\R^n} |\nabla  P^{A}_{t}f(x)|\,|\nabla  P^{B}_{t}g(x)|\,\,dx\,dt 
\,.
\end{equation}
With this notation, the conclusion \eqref{eq: bilincomplex} of Theorem~\ref{t: bilincomplex} can be restated as:

\medskip
{\it Fix $p,\lambda,\Lambda$. For any $\Delta>0$ there exists an explicit $
C(\Delta,\lambda,\Lambda)>0$ such that}
$$
\sup\Mn{ N_p(A,B)}{A,B\in\cA_{\lambda,\Lambda}(\R^n), \Delta_p(A,B)\geq\Delta, n\in\N}\leq C(\Delta,\lambda,\Lambda)<\infty\,.
$$
 
\medskip
\noindent
We show in Proposition~\ref{p: reki begalci} below that for $\Delta<0$ this conclusion is false, even if the supremum is taken over a smaller subfamily of complex rotations of identity matrices. 

Notice that if $A=e^{i\phi}I_n$ then $\lambda_A=\cos\phi$, $\Lambda_A=1$, $\mu(A)=\cos\phi$ and moreover $\Delta_p(A)=\cos\phi-\cos\phi_p=\Delta_p(A^*)<0$ if $\phi_p<|\phi|<\pi/2$. See also \eqref{eq: perasovic}.

\begin{prop}
\label{p: reki begalci}
Fix an arbitrary $p\in(1,\infty)\backslash\{2\}$ and 
$\phi\in(\phi_p,\pi/2)$. 
For any $n\in\N$ write $A_n=e^{i\phi}I_n$. 
Then
$$
\sup_{n\in\N}N_p(A_n,A_n^*)
=\infty\,.
$$
\end{prop}
This result follows from determining the $L^{p}$-contractivity domain of the classical heat semigroup with complex time. See Section \ref{s: zavratnica} and page \pageref{begalci 2015}.

\subsection{Square function estimates dominate the bilinear embedding}
\label{s: mota}

We have just formulated the failure of the dimension-free estimate \eqref{eq: bilincomplex} for pairs $(A,B)$ of matrices for which $\Delta_p(A,B)<0$, and saw that this was obtained by taking the subfamily of pairs $(A,A^*)$. The initial interest in Theorem~\ref{t: bilincomplex} arose from the quest to extend Theorem~\ref{t: oktyabr'} to nonreal $A$, as well as to study the case $B=A^*$ for real or complex $A$. The latter question was posed to the second-named author of the present paper by P. Auscher in July of 2011; the rationale behind this question is discussed here.

\medskip
Results announced in Section~\ref{s: saraj} tell us that the {\sl bilinear embedding is a sufficient condition for semigroup estimates}. Now we explain that it is also a {\sl necessary condition for square function estimates}.

Obviously, bilinear integrals are dominated by the {\it vertical square function}:
\begin{equation}
\label{eq: hor MVD}
\int_0^\infty\int_{\R^n} |\nabla  e^{-tL_{A}}f(x)|\,|\nabla  e^{-tL_{B}}g(x)|\,dx\,dt\leq\Nor{G_1^{L_{A}}f}_p\Nor{G_1^{L_{B}}g}_q\,,
\end{equation}
where
$$
G_1^{L}u(x) := \left(\int_{0} ^{\infty}\big|\nabla e^{-tL}u(x)\big|^2\,dt\right)^{1/2}\,.
$$
It was proven by Auscher \cite{A} that, even if $A\in\cA_{\lambda,\Lambda}(\R^n)$ is real, $G_1^{L_{A}}$ is $L^{p}$ bounded only for a certain range of $p$'s which depends on $p,n,\lambda,\Lambda$. See \cite[Corollaries 6.3-6.7]{A} or \cite[Proposition~1.2]{AHM}. 
On the other hand, Theorem~\ref{t: oktyabr'} holds for all $p\in(1,\infty)$. This is somewhat surprising, since the bilinear integral and the vertical square function are only ``two H\"older inequalities apart'', cf. \eqref{eq: hor MVD}. On top of that, Theorem~\ref{t: oktyabr'} features dimension-free constants. To balance this, in \cite[Theorem~3.1.(2)]{AHM} the $L^{p}$ estimates were obtained for {\it conical} square functions \eqref{eq: conical}. The range of admissible $p$'s there is related to the uniform boundedness of $\left(P_{t}^{A}\right)_{t>0}$ on $L^{p}$, which in case of real $A$ equals $(1,\infty)$. The estimates in \cite[Theorem~3.1]{AHM} depend on $n$. It would be interesting to investigate the relation between dimension-free estimates of conical square functions associated with complex matrices on one hand and the semigroup contractivity or $p$-ellipticity on the other.

Bilinear integrals that we consider in Theorem~\ref{t: bilincomplex} naturally correspond to 
the following two {\it nontangential} or {\it conical square functions} associated with $L=L_{A}$: 
\begin{eqnarray}
&&\label{eq: conical}
\g_1^L(u)(x) =
{\displaystyle
\left(
\iint_{V_x}\left|\nabla(e^{-tL}u)(y)\right|^2\,\frac{dy\,dt}{t^{n/2}}
\right)^{1/2}},\\ 
&&\g_2^L(u)(x)=
{\displaystyle
\left(
\iint_{V_x}\left|L^{1/2}(e^{-tL}u)(y)\right|^2\,\frac{dy\,dt}{t^{n/2}}
\right)^{1/2}}
,\nonumber
\end{eqnarray} 
where $V_x$ is the cone $\Mn{(y,t)\in\R^{n}\times(0,\infty)}{|x-y|<\sqrt t}$. Considerable attention has been devoted to studying $L^{p}$ properties of $\sqrt{\g_1^L(u)^2+\g_2^L(u)^2}$, see \cite[Section~6.2]{A}. The boundedness of $\g_1$ alone was treated in \cite[Proposition~1.3]{AHM} for the case where $A$ is real.

The functionals $\g_1^L,\g_2^L$ are related to the bilinear estimates \eqref{eq: bilincomplex} with $B=A^*$. 
Indeed, owing to a known averaging trick, see \cite{AHM} and the references therein, we have estimates
\begin{equation}
\label{eq: pecenice}
\aligned
\left|\int_0^\infty\int_{\R^{n}} \sk{A\nabla  P_t^Af(x)}{ \nabla  P_t^{A^*}g(x)}_{\C^{n}}\,dx\,dt\right|& \, \leqsim_n \Nor{\g_j^{L_A}(f)}_p\Nor{\g_j^{L_A^*}(g)}_q
\endaligned
\end{equation}
for $j=1,2$. The inequality with $j=1$ still holds true if in the left-hand side we put the modulus inside the integral. We also have \eqref{eq: pecenice} if in the right-hand side we  replace $\g_2^L$ by its {\it vertical} counterpart $G_2^L$, defined by 
$$
G_2^{L}u(x) := \left(\int_{0} ^{\infty}\big|L^{1/2} e^{-tL}u(x)\big|^2\,dt\right)^{1/2}\,.
$$

\subsection{Semigroup estimates} 
\label{s: saraj}
The $L^{p}$ estimates of semigroups generated by elliptic operators in divergence form have long known to be of 
major importance \cite{Ba, A, O, P, CM, CM2014}. A result of Auscher \cite[Corollary~3.6]{A} asserts that if $|1/2-1/p|\leq 1/n$ then $(e^{-tL_{A}})_{t>0}$ is bounded on $L^{p}(\R^n)$. Hofmann, Mayboroda and McIntosh proved in \cite{HMMcI} that this condition is sharp in terms of $n$, in the sense that if $|1/2-1/p|> 1/n$ then $(e^{-tL_{A}})_{t>0}$ is not bounded on $L^{p}(\R^n)$ for some $A\in\cA(\R^n)$.

The bilinear embedding associated with $(A,A^*)$ implies uniform $L^p$ boundedness of $\left(P_t^A\right)_{t>0}$. Indeed, for $A,B\in\cA_{\lambda,\Lambda}(\R^n)$ and $f,g\in C_c^\infty(\R^n)$ define $\f(s):=\sk{P_{s}^{A}f}{P_{s}^{B}g}_{L^2}$ for $s>0$. Then $\f'(s)=-\sk{(A+B^*)\nabla P_{s}^{A}f}{\nabla P_{s}^{B}g}_{L^{2}}$.
By the injectivity of $L_{A}$ and the analyticity of the associated semigroup, $\nor{P_s^Af}_2\rightarrow0$ as $s\rightarrow\infty$. Therefore $\f(s)\rightarrow0$ as $s\rightarrow\infty$. Hence
$$
\f(t)=-\int_t^\infty\f'(s)\,ds.
$$
Consequently,
$$
\aligned
\left|\sk{P_{t}^{A}f}{P_{t}^{B}g}_{L^{2}}\right|&
\leq 2\Lambda\int_0^\infty\sk{|\nabla P_s^Af|}{|\nabla P_s^Bg|}_{L^{2}}\,ds
.
\endaligned
$$
Keeping in mind the notation \eqref{eq: rocca di calascio}, this implies that 
\begin{equation}
\label{eq: besko}
\sup_{t>0}
\nor{\left(P_{t}^{B}\right)^*P_{t}^{A}}_{\cB(L^{p}(\R^n))}
\leqslant 2\Lambda N_{p}(A,B). 
\end{equation}
When $B=A^*$ one has $\left(P^{B}_{t}\right)^{*}=P^{A}_{t}$. Therefore Theorem~\ref{t: bilincomplex} immediately gives that the semigroup $(P^{A}_{t})_{t>0}$ is uniformly bounded on $L^{p}$ when $\Delta_{p}(A,A^{*})>0$, which is in turn, by Corollary~\ref{c: garrison} {\it \ref{eq: haeri}.)}, equivalent to $\Delta_{p}(A)>0$. Actually, by Theorem \ref{t: Aufnahmebetriebsartenwaehler} below, when $\Delta_{p}(A)>0$ the semigroup $\left(P^{A}_{t}\right)_{t>0}$ turns out to be {\sl contractive} on $L^p$.

The problem of characterizing $L^{p}$ contractivity of semigroups generated by elliptic divergence-form operators has a long history. Early results include the 1959 paper by Agmon, Douglis and Nirenberg \cite{ADN}. For a rather recent summary of references on this topic the reader is advised to consider Cialdea and Maz'ya \cite{CM} and \cite[pp. 71--72]{CM2014}. Their 2005 paper \cite{CM} was a major step towards understanding the problem for general $(P^{A}_{t})_{t>0}$; see also their monograph \cite[Chapter 2]{CM2014}. Let us take a closer look at the contractivity results there. 

Assuming the notation \eqref{eq: verve}, under additional assumptions that either: 
\begin{itemize}
\item
$\Omega\subset\R^n$ is a bounded domain with sufficiently regular boundary \cite[p. 1087]{CM}, the entries of $A$ belong to the class $C^1(\overline\Omega)$, and $\Im A$ is symmetric, that is, $(\Im A)_{\sf a}=0$; or else
\item
$A$ is constant and $\Omega$ contains balls of arbitrarily large radius, 
\end{itemize}
Cialdea and Maz'ya proved that the contractivity of $(P^{A}_{t})_{t>0}$ on $L^{p}(\Omega)$ is equivalent to $\Delta_p(A_{\mathbf s})\geq0$. See \cite[Theorems 5,2,3]{CM}, \cite[Theorem~2.23]{CM2014} and
Proposition~\ref{p: trazi se morricone} below. 
Cialdea \cite[p. 74]{C} asked about generalizing the results from \cite{CM} beyond the restrictions posed by the cited smoothness and symmetry conditions. In response to these questions, we extend in Theorem~\ref{t: Aufnahmebetriebsartenwaehler} the characterization by Cialdea and Maz'ya to the case when
\begin{itemize}
\item
$\Omega\subset\R^n$ is an arbitrary open set, 
\item
$A$ is not necessarily smooth, and 
\item
$\div(\Im A)_{\sf a}^{(k)}=0$ for all $k\in\{1,\hdots,n\}$, 
but not necessarily $(\Im A)_{\sf a}=0$. 
\end{itemize}

\noindent
Here $\div\,W^{(k)}$ is the distributional divergence of the $k$-th column of a matrix $W$ with entries in $L_{loc}^1(\Omega)$. 
\label{s: svoju rabotu ceram}
We show that the last remaining case, that is, when for some $k$ we have 
$\div(\Im A)_{\sf a}^{(k)}\ne0$, 
is fundamentally different, because then the condition $\Delta_p(A_{\mathbf s})\geq0$ is in general {\it not} equivalent to the contractivity of $(P^{A}_{t})_{t>0}$ on $L^{p}(\Omega)$, not even for $A\in C^\infty(\R^n)$.

We also prove that $\Delta_p(A)\geq0$ is a sufficient condition for $L^p$ contractivity which is devoid of any smoothness or symmetry assumptions on $A$ or geometric conditions on $\Omega$.

\medskip
Now we state our result. 

\begin{thm}
\label{t: Aufnahmebetriebsartenwaehler}
Suppose that $n\in\N$, $\Omega\subset\R^n$ is open, $A\in\cA(\Omega)$ and $p>1$. 
Consider the following statements:
\begin{enumerate}[(a)]
\item
\label{Imshi} 
$\Delta_p(A)\geq0$;
\item
\label{Wara} 
$(P^{A}_{t})_{t>0}$ extends to a contractive semigroup on $L^{p}(\Omega)$;
\item
\label{eq: Kidbuhom}
$\Delta_p(A_{\sf s})\geq0$.
\end{enumerate}
Then: 
\begin{itemize}
\item
{\it(\ref{Imshi})} $\Rightarrow$ {\it(\ref{Wara})} 
$\Rightarrow$ {\it(\ref{eq: Kidbuhom})};
\vskip 2pt
\item
if\, $\div(\Im A)_{\sf a}^{(k)}=0$ for all $k\in\{1,\hdots,n\}$  
then {\it(\ref{Wara})} $\Leftrightarrow$ {\it(\ref{eq: Kidbuhom})};
\vskip 2pt

\item
if \,$\div(\Im A)_{\sf a}^{(k)}\ne0$ for some $k\in\{1,\hdots,n\}$ 
then, in general, {\it(\ref{eq: Kidbuhom})} $\not\Rightarrow$ {\it(\ref{Wara})}.
\end{itemize}
\end{thm}

\noindent
We prove Theorem \ref{t: Aufnahmebetriebsartenwaehler} in Section \ref{masamune}.
Alternative descriptions of conditions $\Delta_p(A)\geq0$ and $\Delta_p(A_{\sf s})\geq0$ are contained in Propositions~\ref{p: sahbaz} and \ref{p: trazi se morricone}.

\medskip
\noindent
{\bf Comments.} 
The sufficiency part of Theorem~\ref{t: Aufnahmebetriebsartenwaehler} thus complements the sharp results by Auscher \cite{A} as follows: for any $A\in\cA(\R^n)$,
 \begin{itemize}
 \item
 (Auscher \cite{A}) 
 \hskip 4.3pt 
 if $|1-2/p|\leq\, 2/n$\,\hskip 5.3pt then $(P^{A}_{t})_{t>0}$ is bounded on $L^{p}(\R^n)$;
 \vskip 2pt
 \item
 (Theorem~\ref{t: Aufnahmebetriebsartenwaehler}) 
 if $|1-2/p|\leq 
 \mu(A)$ then $(P^{A}_{t})_{t>0}$ is contractive on $L^{p}(\R^n)$.
 \end{itemize}
Compare also with Bakry \cite[Th\'eor\`eme 7]{Ba}.

The admissible (and optimal) range of $p$'s in Auscher's \cite{A} above-cited result shrinks to $\{2\}$ as $n\rightarrow\infty$.
Regarding the {\sl contractivity}, given $n\in\N$, the largest set $J_n\subset(1,\infty)$ so that 
$P^{A}_{t}$ is contractive on $L^{p}(\R^n)$ for any $p\in J_n$ and any 
$A\in\cA(\R^n)$, is just $J_n=\{2\}$.
Counterexamples are again provided by Theorem~\ref{t: tgv bordeaux-cdg}: 
given $p\ne2$, it suffices to take $A=\exp(i\phi)I_n$ with $\phi\in(\phi_p,\pi/2)$.
On the other hand, the condition from Theorem~\ref{t: Aufnahmebetriebsartenwaehler} is dimension-free. 

Assume, as in \cite[p. 1087]{CM}, that $\Omega$ is a bounded domain with $\partial\Omega\in C^2$ and that the entries of $A$ belong to $C^1(\overline\Omega)$. By elliptic regularity \cite{ADN} we have that for any $p\in(1,\infty)$ the semigroup $\left(P_t^A\right)_{t>0}$ extends to an analytic semigroup on $L^p(\Omega)$ and the domain of its generator is $W_0^{1,p}(\Omega)\cap W^{2,p}(\Omega)$. 
For more details see \cite[Theorem~6.3.4]{E} for the case $p=2$ and \cite[Section 3]{Lun} 
for arbitrary $p\in(1,\infty)$. Thus Theorem~\ref{t: Aufnahmebetriebsartenwaehler} 
generalizes the $L^{p}$-contractivity result of Cialdea and Maz'ya \cite[Theorem~5]{CM}.

A minor modification of Example~1 from \cite{CM}, so as to include elliptic matrices, shows that {\it(\ref{Wara})} of Theorem~\ref{t: Aufnahmebetriebsartenwaehler} does not imply {\it(\ref{Imshi})}, not even for constant $A$.

Section~\ref{s: pilot} and Theorem~\ref{t: Aufnahmebetriebsartenwaehler} also imply a known result, formulated in Ouhabaz \cite[Theorem~4.28]{O}, that when $A$ is real, the semigroup $(P^{A}_{t})_{t>0}$ is contractive on $L^{p}(\Omega)$ for all $1<p<\infty$. More generally, \cite[Theorem~4.29]{O} implies that if 
\begin{equation}
\label{eq: spat' hochetsya}
\Im A \text{ is purely antisymmetric and }\div(\Im A)_{\mathsf a}^{(k)}=0
\end{equation}
then $(P^{A}_{t})_{t>0}$ extends to a contraction semigroup on $L^{p}(\Omega)$. This is again a special case of our Theorem~\ref{t: Aufnahmebetriebsartenwaehler}, because $\Im A_{\mathsf s}=0$ is for $A\in\cA(\Omega)$ equivalent to 
$\Delta_\infty(A_{\mathsf s})\geq0$, which implies (see Section \ref{s: pilot}) that 
$\Delta_\infty(A_{\mathsf s})=0$ and hence $\Delta_p(A_{\mathsf s})>0$ for all $p\in(1,\infty)$ (Corollary \ref{c: niagara} and Proposition \ref{p: reci dje si}).
The appearance of the ``limit case $p\rightarrow\infty$'' of {\it(\ref{eq: Kidbuhom})}, namely,  $\Delta_\infty(A_{\mathsf s})\geq0$, should in view of Theorem \ref{t: Aufnahmebetriebsartenwaehler}  not come as a surprise. A complementary reason to expect it is that the above-cited contractivity result contained in \cite[Theorem~4.29]{O} follows, by complex interpolation, from a stronger result, see \cite[Corollaire~2.2]{ABBO} or \cite[Corollary~4.12]{O}, which asserts that \eqref{eq: spat' hochetsya} in fact {\it characterizes} the $L^{\infty}$-contractivity of $\left(P_t^A\right)_{t>0}$.

In the special case of $A$ being smooth on a bounded domain, the authors of \cite{CM} indirectly 
prove the implication {\it(\ref{Imshi})} $\Rightarrow$ {\it(\ref{Wara})} of Theorem~\ref{t: Aufnahmebetriebsartenwaehler}; see Proposition~\ref{p: sahbaz} as well as their Corollary~4, Theorem~3 and the proof of Theorem~5. 
In our proof of {\it(\ref{Imshi})} $\Rightarrow$ {\it(\ref{Wara})} however, as said before, no smoothness or symmetry of $A$ is assumed, and $\Omega$ is allowed to be an arbitrary open set.

Finally, we saw that $p$-ellipticity implies both the dimension-free bilinear embedding (Theorem \ref{t: bilincomplex}) and the semigroup contractivity (Theorem~\ref{t: Aufnahmebetriebsartenwaehler}). We also know that bilinear embedding implies boundedness of the semigroup \eqref{eq: besko}. Thus it would be natural to inquire about a direct connection between {\it dimension-free} bilinear embeddings on $L^p\times L^q$ and semigroup {\it contractivity} on $L^p$.

\subsection{Organization of the paper}
Section~\ref{s: Prokofjev 5. simfonija 3. stavak Karajan 1969} serves the purpose of collecting in one spot most of the definitions and facts indispensable for this paper. In Section~\ref{s: sokolov hortus musicus} we sketch the main ideas behind our proofs, devoting particular attention to explaining the heat-flow-Bellman-function method. In Section~\ref{s: chakrulo} we show how integration by parts of the flow associated with the function $\Phi$ helps identify the fundamental convexity requirement on $\Phi$. 
In Section~\ref{s: garanča} we define the Bellman function and show that it possesses the desired convexity. By considering the Hessians of power functions in one complex variable we explain how the condition \eqref{eq: aferim jano} was born.
In Section~\ref{s: sonno} we complete the proof of the bilinear embedding (Theorem~\ref{t: bilincomplex}).
In Section~\ref{masamune} we prove our result on the contractivity of semigroups (Theorem~\ref{t: Aufnahmebetriebsartenwaehler}).
Finally, Appendix is a technical part that provides a regularization argument used in the proof of Theorem~\ref{t: bilincomplex}.

\section{More notation and preliminaries}
\label{s: Prokofjev 5. simfonija 3. stavak Karajan 1969}

For $a_1,a_2>0$ we write $a_1\geqsim a_2$ if there is a constant $C>0$ such that $a_1\geqslant C a_2$. Similarly we define $a_1\leqsim a_2$. If both $a_1\geqsim a_2$ and $a_1\leqsim a_2$ then we write $a_1\sim a_2$.

We will denote $\C_+=\mn{\zeta\in\C}{\Re\zeta>0}$. 
Let $n\in\N$. If $z=(z_1,\hdots,z_n)\in\C^n$, we denote
$\Re z=(\Re z_1,\hdots,\Re z_n)$, $\Im z=(\Im z_1,\hdots,\Im z_n)$ and
$\bar z=(\bar z_1,\hdots,\bar z_n)$.
If also $w\in\C^n$, we write
$$
\sk{z}{w}_{\C^n}=\sum_{j=1}^nz_j\overline w_j\,
$$
and $|\xi|^2=\sk{\xi}{\xi}_{\C^n}$.
When the dimension is obvious, we sometimes omit the index $\C^n$ and only write $\sk{z}{w}$.
When both $z$ and $w$ belong to $\R^n$, we sometimes emphasize this by writing $\sk{z}{w}_{\R^n}$.
This should not be confused with the standard pairing
\begin{equation}
\label{eq: si mislyat}
\sk\f\psi=\int_{\R^n}\f\bar\psi,
\end{equation}
where $\f,\psi$ are complex functions on $\R^n$ such that the above integral makes sense.
All the integrals in this paper are taken over the Lebesgue measure $m$, therefore we will mostly write 
them without $dm$ at the end.

If $x_1,\hdots,x_n$ are the coordinates on $\R^n$, we define, initially on $C_c^\infty(\R^n)$ or $\cS(\R^n)$, by 
$$
\Delta_n=\sum_{j=1}^n\frac{\pd^2 }{\pd x_j^2}
$$
the {\it Laplace operator} on $\R^n$. When the underlying dimension  
is clear, we simply write $\Delta$. The same symbol will also denote the 
negative of the generator of the classical heat semigroup on $L^{p}(\R^n)$.

When $F=F(x_1,\hdots,x_m)\in C^2(U)$ for some open $U\subset \R^m$, we 
introduce the {\it Hessian matrix} of $F$, calculated at $x\in U$:
$$
{\rm Hess}(F;x)
=\left[\pd^2_{x_ix_j}F(x)\right]_{i,j=1}^m\,.
$$

Let $\C^{n,n}$ be the space of all complex $n\times n$ matrices. For $M\in\C^{n,n}$ denote its conjugate transpose by $M^{*}$ and define its {\it symmetric part} $M_{\sf s}$ and {\it antisymmetric part} $M_{\sf a}$ by
\begin{equation}
\label{eq: verve}
M_{\sf s}:=\frac{M+M^T}2,
\quad\quad
M_{\sf a}:=\frac{M-M^T}2.
\end{equation}

\label{ah moj aljo}
Write $I_{\R^n}$ for the identity matrix on $\R^n$.
If $M_1,M_2\in\R^{m,m}$ then let $M_1\oplus M_2$
denote the $2m\times 2m$ block-diagonal matrix having 
$M_1,M_2$ (in this order) on the diagonal. 
If $f,g$ are complex functions on some sets $X,Y$ respectively, then $f\otimes g$ is the abbreviation for the function on $X\times Y$ mapping $(x,y)\mapsto f(x)g(y)$.

Let $L^\infty(\Omega\rightarrow\C^{n,n})$ denote the space of complex matrix functions on $\Omega$ with entries in $L^\infty(\Omega)$ and by $\cB(X)$ the space of bounded linear operators on a Banach space $X$.

\subsection{Identification operators}
\label{s: grand sonata luganskij}
We will explicitly identify $\C^n$ with $\R^{2n}$. 
For each $n\in\N$ consider the operator
$\cV_{n}:\C^{n}\rightarrow\R^{n}\times\R^{n}$, defined by 
$$
\cV_{n}(\alpha+i\beta)=
(\alpha,\beta).
$$
 One has, for all  $z,w\in\C^n$,
 \begin{equation}
\label{eq: angelis - romance - hatmulin}
\Re\!\sk{z}{w}_{\C^n}=\sk{\cV_n(z)}{\cV_n(w)}_{\R^{2n}}
\,.
\end{equation}
If $(\omega_{1},\omega_{2})\in\C^{n}\times\C^{n}$ then
$
\cV_{2n}(\omega_{1},\omega_{2})
=(\Re\omega_1, \Re\omega_2, \Im\omega_1, \Im\omega_2)
\in(\R^n)^4.
$
On $\C^n\times\C^n$ define another identification operator $\cW_{2n}:\C^{n}\times\C^{n}\rightarrow (\R^{n})^4$, 
$$
\cW_{2n}(\omega_{1},\omega_{2})
=(\cV_n(\omega_1),\cV_n(\omega_2))
=(\Re\omega
_1, \Im\omega_1, \Re\omega_2, \Im\omega_2). 
$$
When the dimensions of the spaces on which the identification operators act is clear, we will sometimes omit the indices and instead of $\cV_n,\cW_m$ only write $\cV,\cW$.

Given a matrix $D=[d_{ij}]_{i,j}\in\R^{M,N}$ and $n\in\N$, we let $D\otimes I_{\R^n}\in\R^{Mn,Nn}$ be the 
Kronecker product of $D$ with the identity on $\R^n$, that is,
$$
D\otimes I_{\R^n}:=[d_{ij}\cdot I_n]_{i,j}\,.
$$
For example, 
$$
[a\ b]\otimes I_{\R^3}=
\left[
\begin{array}{cccccc}
 a & & & b & &   \\
&  a & & & b &    \\
& &  a & & & b   
\end{array}
\right]
\hskip 20pt
\text{and }
\hskip 20pt
\left[
\begin{array}{cc}
 a & b   \\
c & d
\end{array}
\right]
\otimes I_{\R^2}=
\left[
\begin{array}{cccc}
 a & & b  &   \\
  & a &  & b  \\
 c & & d  &   \\
  & c &  & d 
\end{array}
\right]\,.
$$

If $A=[a_{ij}]_{i,j=1}^n\in\C^{n,n}$
then $\Re A:=[\Re a_{ij}]_{i,j=1}^n$ and $\Im A:=[\Im a_{ij}]_{i,j=1}^n$. 
We shall frequently need the following derived real $(2n)\times(2n)$ matrix:
$$
\cM(A)=\left[
\begin{array}{rr}
\Re A  & -\Im A\\
\Im A  & \Re A
\end{array}
\right]\,.
$$
Its significance stems from the formula 
\begin{equation}
\label{eq: zubicki - perpetuum mobile - hatmulin}
\cV(A\xi)=\cM(A)\cV(\xi)  \hskip 50pt \text{ for all }\xi\in\C^n.
\end{equation}
We can view $\cM$ as a mapping $\C^{n,n}\longrightarrow\R^{2n,2n}$. 
Observe that $\cM(A^*)=\cM(A)^T$ and $\cM(AB)=\cM(A)\cM(B)$. We derive from \eqref{eq: angelis - romance - hatmulin} and \eqref{eq: zubicki - perpetuum mobile - hatmulin} the useful identities
\begin{equation}
\label{eq: profana}
\aligned
\Re\sk{A\xi}{\eta}_{\C^n}&=
\sk{\cM(A)\cV(\xi)}{\cV(\eta)}_{\R^{2n}}\\
\Im\sk{A\xi}{\eta}_{\C^n}
&=\sk{\cM(A)\cV(\xi)}{\cV(i\eta)}_{\R^{2n}}.
\endaligned
\end{equation}

\subsection{Generalized Hessians and generalized convexity}
\label{s: cajkovskij grand sonata}
The objects and notions defined in this section will appear throughout the paper. 
While in principle one definition would suffice, for the sake of clarity we treat the case of functions defined on $\C$ and associated with a single matrix, and the case of functions defined on $\C^2$ and associated with a pair of matrices, separately.

\subsubsection*{One-dimensional case}
Take $A,B\in \C^{n, n}$. 
Suppose that $F:\C\rightarrow \R$ is smooth, $s\in\C$ and  $\xi\in\C^n$. We set
\begin{equation}
\label{eq: ulica marata 5}
H^{A}_{F}[s;\xi]= \sk{\left[{\rm Hess}_{\cV_1}(F;s)\otimes I_{\R^{n}}\right]
\cV_{n}(\xi)}{\cM(A)\cV_{n}(\xi)}_{\R^{2n}},
\end{equation}
where ${\rm Hess}_{\cV_1}(F;s)={\rm Hess}(F\circ\cV_1^{-1};\cV_1(s))$. 
In block notation,  with $\cH=\R^n$, 
$$
H^{A}_{F}[s;\xi]=
\sk{ {\rm Hess}_{\cV_1}(F;s)
\left[\begin{array}{c} \Re\xi \\\Im\xi\end{array}\right]}
{\left[\begin{array}{cr}\Re A & -\Im A \\ \Im A & \Re A\end{array}\right]
\left[\begin{array}{c}\Re\xi \\ \Im\xi\end{array}\right]
}_{\cH^2}.
$$
We say that $F$ is {\it convex with respect to $A$} if $H_F^A[s;\xi]\geq 0$ for all $s,\xi$. 

\subsubsection*{Two-dimensional case}
Similarly, if $\Phi:\C^{2}\rightarrow \R$ is smooth, $v\in\C^{2}$ and $\omega=(\omega_{1},\omega_{2})\in \C^{n}\times\C^{n}$, define
\begin{equation}
\label{eq: tetastevka}
 H^{(A,B)}_{\Phi}[v;\omega]=
 \sk{
 \left[
    {\rm Hess}_{\cW_2}(\Phi;v)\otimes I_{\R^n}
 \right]
 \cW_{2n}(\omega)}
 {\left[\cM(A)\oplus \cM(B)\right]\cW_{2n}(\omega)}_{\R^{4n}},
\end{equation}
where ${\rm Hess}_{\cW_2}(\Phi;v)={\rm Hess}(\Phi\circ\cW_2^{-1};\cW_2(v))$. 
In block notation,  with $\cH=\R^n$, 
$$
H^{(A,B)}_{\Phi}[v;\omega]=
\sk{ {\rm Hess}_{\cW_2}(\Phi;v) 
\left[\begin{array}{c}\Re\omega_1 \\ \Im\omega_1 \\ \Re\omega_2 \\ \Im\omega_2\end{array}\right]}
{\left[\begin{array}{crcr}\Re A & -\Im A &  &  \\ \Im A & \Re A&  &  \\ &  & \Re B &-\Im B  \\ &  & \Im B & \Re B \end{array}\right] 
\left[\begin{array}{c}\Re\omega_1 \\ \Im\omega_1 \\ \Re\omega_2 \\ \Im\omega_2\end{array}\right]
}_{\cH^4}.
$$
We say that $\Phi$ is {\it convex with respect to the pair $(A,B)$} if $H_\Phi^{(A,B)}[v;\omega]\geq 0$ for all $v,\omega$.

\subsection{Numerical range and sectoriality}
\label{s: njuska}
Given $\phi\in (0,\pi)$ define the sector
\begin{equation*}
\label{autobus karlovac-posedarje}
\bS_{\phi}=\{z\in\C\setminus\{0\}\,;\ |\arg z|<\phi\}.
\end{equation*}
Also set $\bS_{0}=(0,\infty)$. 
Suppose that $\oA$ is a closed densely defined linear operator on a complex Banach space $X$. We denote its {\it spectrum} by $\sigma(\oA)$. Let $\th\in[0,\pi)$. Following \cite{Haase}, we say that $\oA$ is {\it sectorial of angle $\th$} if:
\begin{itemize}
\item
$\sigma(\oA)\subseteq \overline\bS_\vartheta$ and
\item
for every $\e\in(0,\pi-\vartheta)$ we have
$$
\sup_{z\in\C\backslash\overline\bS_{\vartheta+\e}}|z|\cdot\nor{(\oA-zI)^{-1}}<\infty\,.
$$
\end{itemize} 
Operators which are sectorial of some angle from $[0,\pi)$ will simply be called {\it sectorial}. In such a case the number 
$$
\omega(\oA):=\inf\Mn{\vartheta\in[0,\pi/2)}{\oA \text{ is sectorial of angle }\vartheta }
$$
is called the {\it sectoriality angle} of $\oA$.

\medskip
If $\cH$ is a Hilbert space, $\sk{\cdot}{\cdot}_{\cH}$ the scalar product on $\cH$ and $T:\cD(T)\rightarrow \cH$ a densely defined linear operator on $\cH$, we denote by $\cW(T)$ the {\it numerical range} of $T$; that is,
$$
\cW(T)=\mn{\sk{T h}{h}_{\cH}}{h\in \cD(T),\ |h|=1}.
$$ 
When $\cW(T)\subset\overline\bS_{\beta}$ for some $\beta\in(0,\pi)$, we define 
$$
\nu(T):=\inf\Mn{\beta\in(0,\pi)}{\cW(T)\subset\overline\bS_\beta},
$$
that is, $\overline\bS_{\nu(T)}$ is the smallest closed sector which contains the numerical range of $T$.

Furthermore, following \cite{McI} we say that $T$ is {\it $\omega$-accretive} for some $\omega\in[0,\pi/2]$ 
if $\sigma(T)\cup\cW(T)\subset\overline\bS_\omega$. Hence $\nu(T)\leq\omega$ in this case. If $T$ is bounded then $\sigma(T)\subset\overline{\cW(T)}$, thus such $T$ are $\omega$-accretive {\sl precisely} when $\nu(T)\leq\omega$. Any $\omega$-accretive operator is sectorial of angle $\omega$. 

\medskip
Let us return to operators in divergence form. The two accretivity conditions \eqref{eq: elli} and \eqref{eq: ne mogu se kontrolirati}
imply that, for a.e. $x\in\Omega$, the matrix $A(x)$ is $\arccos(\lambda/\Lambda)$-accretive as an operator on the Hilbert space $\C^n$. 
Define
\begin{equation}
\label{eq: Don Giovanni}
\nu(A)=\underset{x\in\Omega}{{\rm ess}\sup}\ \nu(A(x)).
\end{equation}
Then $L_{A}$ is $\nu(A)$-accretive, see \cite{McI}, and $\omega(L_{A})\leq\nu(L_A)\leq\nu(A)\leq\arccos(\lambda/\Lambda)$.

\section{Outline of the proof of the bilinear embedding}
\label{s: sokolov hortus musicus}

Our approach towards Theorem~\ref{t: bilincomplex} consists of defining and studying the {\it heat flow} associated with a particular {\it Bellman function}. The key property of the flow is a quantitative estimate of its derivative \eqref{eq: heatflow p1}. Using integration by parts, we single out the parallel property of the Bellman function alone that implies \eqref{eq: heatflow p1}. It could be perceived as a variant of ``convexity'' associated with the pair of accretive matrices in question. An adequate function turns out to be one constructed by Nazarov and Treil \cite{NT} in 1995. Its properties are formulated in Theorem~\ref{t: bouga}. In proving it we use the fact that their function is composed of tensor products of power functions. This makes the analysis of generalized convexity of power functions an essential part of our proof.
See Section~\ref{s: garanča}.

A simpler version of the Bellman-heat method was also the way through which Theorem~\ref{t: oktyabr'} was proven in \cite{DV-Kato}. 
The other works which stimulated thoughts developed in this paper were \cite{CD-mult, CD-OU}.

\medskip
The Bellman function technique has become widely known in harmonic analysis since the mid 1990s, following the work by Nazarov, Treil and Volberg \cite{NTV}. Afterwards it has been employed in a large number of papers, of which the closest ones to our approach (that is, those where Bellman functions are explictly paired with heat flows) are \cite{DV-Kato, CD-mult, PV, NV, DV-AB, DV, DV-Sch, CD, DP, PSW, MS}.

For another perspective on heat-flow techniques, various examples and references we refer the reader to the papers by Bennett et al. \cite{B, BCCT}.

\subsection{The heat-flow method expanded}
\label{s: heat-flow}
In this section we illustrate in more detail the heat-flow technique we will utilize for proving the bilinear embedding in Theorem~\ref{t: bilincomplex}. The exposition will be rather descriptive, aimed at giving the idea of the proof without dwelling on technical details which will be addressed later.

\smallskip
When proving the bilinear embedding of Theorem~\ref{t: bilincomplex}, a regularization argument (see the appendix) allows us to assume that the coefficients of the matrix functions $A,B\in\cA(\R^{n})$ are {\sl smooth}, that is, of class $C_b^1(\R^n)$ consisting of all bounded $C^1$ functions with bounded derivatives. 

Fix two test functions $f,g\in C_c^\infty(\R^n)$ and $\Phi:\C^2\rightarrow \R_+$ of class $C^1$. 
Suppose that $\psi\in C^\infty_c(\R^n)$ is a radial function, $\psi\equiv 1$ in the unit ball, $\psi\equiv 0$ outside the ball of radius $2$, and
$0< \psi< 1$ elsewhere. For $R>0$ define $\psi_R(x) := \psi(x/R)$. The choice of $A,B,f,g,\Phi,\psi,R$ gives rise to a function $\cE:[0,\infty)\rightarrow \R_+$ defined by 
$$
\cE(t)=\int_{\R^n} \psi_R \cdot \Phi\left(P_{t}^{A}f, P_{t}^{B}g\right).
$$

We say that the flow associated with $A,B$ and $\Phi$ is {\it regular} if, for every $f,g$ the function $\cE$ is continuous on $[0,\infty)$, continuously differentiable on $(0,\infty)$ and
\begin{equation*}
\label{turgenjev}
\cE'(t)=\int_{\R^n} \psi_R \cdot \frac{\pd}{\pd t}\Phi\left(P_{t}^{A}f,P_{t}^{B}g\right).
\end{equation*}

Fix $p>2$. We are interested in finding a function $\Phi\in C^1(\C^2)$, possibly depending on  $p$, such that for any $f,g$ the corresponding flow admits the following properties:
\begin{itemize}
\item
regularity;
\item
{\it quantitative monotonicity}, that is, 
the existence of $\text{\got a}_0=\text{\got a}_0(p,A,B)>0$ such that
\begin{equation}
\label{eq: heatflow p1}
-\cE'(t)
\geq \text{\got a}_0 
\sk{\psi_R|\nabla P_{t}^{A}f|}{|\nabla P_{t}^{B}g|}_{L^2(\R^n)}+(E.T.)\,,
\end{equation}
where $(E.T.)$ stands for ``error term'' which we expect to disappear as $R\rightarrow\infty$;
\item
initial value bound, that is, the existence of $\text{\got b}_0=\text{\got b}_0(p,A,B)>0$ such that
\begin{equation}
\label{eq: heatflow p2}
\cE(0)\leq \text{\got b}_0(\|f\|^p_p+\|g\|^q_q)\,.
\end{equation}
\end{itemize}
For if these conditions are fulfilled, then, for any $f,g$ as above,
$$
\aligned
\text{\got a}_0\int^\infty_0
\sk{\psi_R|\nabla P_{t}^{A}f|}{|\nabla P_{t}^{B}g|}_{L^2(\R^n)}&\,dt
+\int_0^\infty(E.T.)\\
& \leq -\int^{\infty}_{0}\cE'(t)\,dt\leq \cE(0)\leq \text{\got b}_0(\|f\|^p_p+\|g\|^q_q).
\endaligned
$$

We would like to send $R\rightarrow\infty$. Since we are assuming that the coefficients of $A,B$ are smooth, $P_{t}^{A}$ is bounded on $L^{p}$ for any $1\leq p\leq \infty$ \cite[Theorem~4.8]{Auscher}, which enables us to show that $(E.T.)\rightarrow 0$ as $R\rightarrow\infty$. So we arrive at
$$
\text{\got a}_0\int^\infty_0
\sk{|\nabla P_{t}^{A}f|}{|\nabla P_{t}^{B}g|}_{L^2(\R^n)}\,dt
\leq \text{\got b}_0(\|f\|^p_p+\|g\|^q_q).
$$
By replacing $f$ with $\tau f$ and $g$ with $g/\tau$ and optimizing the right-hand side in $\tau>0$, we obtain the bilinear embedding \eqref{eq: bilincomplex},
$$
\int^\infty_0
\sk{|\nabla P_{t}^{A}f|}{|\nabla P_{t}^{B}g|}_{L^2(\R^n)}\,dt
\leqslant C(p,A,B)\|f\|_p\|g\|_q,
$$
for all $f\in C_c^\infty 
({\R^n})$ and $g\in C_c^\infty 
({\R^n})$, where $C(p,A,B)=p^{1/p}q^{1/q}\text{\got b}_0/\text{\got a}_0$.

\medskip
\noindent{\bf Reduction of \eqref{eq: heatflow p1} and \eqref{eq: heatflow p2} to the properties of $\Phi$.}
We would like to translate \eqref{eq: heatflow p1} and \eqref{eq: heatflow p2} into (pointwise) conditions on $\Phi$ alone. 

Clearly, \eqref{eq: heatflow p2} holds provided that $0\leq \Phi(\zeta,\eta)\leq \text{\got b}_0\left(|\zeta|^p+|\eta|^q\right)$, for all $\zeta,\eta\in\C$.

As for \eqref{eq: heatflow p1}, it will be proven in Section~\ref{s: moszkowski} through integration by parts that {\it when $\Phi$ is of class $C^2$ and $A,B$ are smooth}, the regularity of the flow holds  and implies 
\begin{equation}
\label{eq: kamulator kolo}
-\cE'(t)=\int_{\R^n}\psi_R\cdot H_\Phi^{(A,B)}[h_t;\nabla h_t]+(E.T.),
\end{equation}
with $h_t=(P_{t}^{A}f,P_{t}^{B}g)$ and $\nabla h_t=(\nabla P_{t}^{A}f,\nabla P_{t}^{B}g)$, while $H_\Phi^{(A,B)}$ is as in \eqref{eq: tetastevka}. As said before, we use the smoothness of $A,B$ to show that $\lim_{R\rightarrow\infty}(E.T.)=0$. 
Consequently, for \eqref{eq: heatflow p1} it will be sufficient to have the following pointwise inequality:

\medskip
{\it for a.e. $x\in\R^n$ we have }
\begin{equation*}
H_\Phi^{(A,B)}[v;\omega]
\geq \text{\got a}_0|\omega_1||\omega_2|, 
\hskip 30pt \forall\,v\in\C^2,\ \forall\,  \omega=(\omega_1,\omega_2)\in\C^{n}\times\C^{n}
\,.
\end{equation*}
\noindent
It turns out that a function $\Phi$ satisfying this property, as well as the above-specified size estimate, exists when $\Delta_{p}(A,B)>0$ or, equivalently, $|1-2/p|<\mu(A,B)$. 

\medskip
\noindent{\bf Summary.}
Given $p>2$ and $A,B\in\cA(\R^n)$ satisfying $\Delta_{p}(A,B)>0$ or, equivalently, $|1-2/p|<\mu(A,B)$, the proof of Theorem~\ref{t: bilincomplex} eventually reduces to finding a $C^2$ function $\Phi:\C^2\rightarrow\R$ such that: 
\begin{enumerate}[(i)]
\item
the corresponding flow is regular;
\item 
$0\leqslant \Phi(\zeta,\eta)\leqsim |\zeta|^p+|\eta|^q$
for all $(\zeta,\eta)\in \C^2$;
\item
\label{eq: certina}
$H_{\Phi}^{(A,B)}[v;\omega]\geqsim 
|\omega_1||\omega_2|$
for any $v\in\C^2$ and $\omega=(\omega_1,\omega_2)\in\C^{n}\times\C^{n}$.
\end{enumerate}
We can relax the condition $\Phi\in C^2(\C^2)$ by requiring that $\Phi$ be of class $C^1$ and almost everywhere twice differentiable with locally integrable second-order partial derivatives. Then we can consider the flow corresponding to a regularization of $\Phi$ by standard mollifiers (see Section~\ref{s: prehlada}).

\section{Chain rule}
\label{s: chakrulo}

For $w=w_{1}+iw_{2}\in\C$, introduce the complex derivatives
$$
\partial_{\bar{w}}=\frac{\partial_{w_{1}}+i\partial_{w_{2}}}{2},\quad \partial_{w}=\frac{\partial_{w_{1}}-i\partial_{w_{2}}}{2}.
$$ 
Let $\Phi:\C^{2}\rightarrow \R$ be of class $C^2$. Recall the notation introduced in Sections~\ref{s: grand sonata luganskij} and \ref{s: cajkovskij grand sonata}. Define
$$
\aligned
\partial_{\bar{\zeta}}\Phi & =\partial_{\bar{\zeta}}(\Phi\circ\cW_{2}^{-1})\circ\cW_{2}\\ 
\partial_{\bar{\eta}}\Phi & =\partial_{\bar{\eta}}(\Phi\circ\cW_{2}^{-1})\circ\cW_{2}.
\endaligned
$$
\begin{lem}
\label{l: chain Phi}
Suppose that $\Phi$ is as above. Let $\varphi,\psi\in H^{1}(\R^{n})\cap L^{\infty}(\R^{n})$. Then $(\partial_{\bar\zeta}\Phi)\circ(\f,\psi)$ and  $(\partial_{\bar\eta}\Phi)\circ(\f,\psi)$ belong to $H^{1}_{{\rm loc}}(\R^{n})$,  $\nabla\left((\partial_{\bar\zeta}\Phi)\circ(\f,\psi)\right)$ and $\nabla\left((\partial_{\bar\eta}\Phi)\circ(\f,\psi)\right)$ belong to $L^{2}(\R^{n};\C^{2n})$, and 
$$
2 
\cW_{2n}\left( 
\nabla
   \left[(\partial_{\bar\zeta}\Phi)\circ(\f,\psi)\right],
   \nabla\left[(\partial_{\bar\eta}\Phi)\circ(\f,\psi)
\right]\right)
=
  \left[
     {\rm Hess}_{\cW_2}\left(\Phi;(\varphi,\psi)\right)\otimes I_{\R^n}
  \right]
  \cW_{2n}(\nabla\varphi,\nabla\psi).
$$  
\end{lem}

\begin{proof}
Write $\nabla$ for the gradient with respect to $x\in\R^{n}$ and $\overline\nabla$ for the gradient with respect to $(\zeta_{1},\zeta_{2},\eta_{1},\eta_{2})\in\R^{4}$. Let also 
$
\Psi:=\Phi\circ\cW_2^{-1}:\R^4\rightarrow\R
$
and
$
k:=\cW_2(\f,\psi):\R^n\rightarrow\R^4.
$
Then by the chain rule for weak derivatives \cite[Theorem~2.1.11]{Z},
\begin{align}
\label{eq: meiji}
2& \cW_{2n}\left( 
\nabla
   \left[(\partial_{\bar\zeta}\Phi)\circ(\f,\psi)\right],
   \nabla\left[(\partial_{\bar\eta}\Phi)\circ(\f,\psi)
\right]\right)\nonumber \\   
&=
\left(
\nabla
     \left( 
         \partial_{\zeta_1}\Psi\circ k
     \right),
\nabla
     \left( 
         \partial_{\zeta_2}\Psi\circ k
     \right),
\nabla
     \left( 
         \partial_{\eta_1}\Psi\circ k
     \right),
\nabla
     \left( 
         \partial_{\eta_2}\Psi\circ k
     \right)\right)\\
&=
\left(
   \left[
      \overline\nabla(\partial_{\zeta_{1}}\Psi)\circ k
   \right]
      \cdot\nabla k,
   \left[
      \overline\nabla(\partial_{\zeta_{2}}\Psi)\circ k
   \right]
      \cdot\nabla k,
   \left[
      \overline\nabla(\partial_{\eta_{1}}\Psi)\circ k
   \right]
      \cdot\nabla k,
   \left[
      \overline\nabla(\partial_{\eta_{2}}\Psi)\circ k
   \right]
      \cdot\nabla k
\right).\nonumber 
\end{align}
Recall that $k=(\Re\f,\Im\f,\Re\psi,\Im\psi)$ and observe that 
\begin{equation}
\label{cika smaje}
\nabla k=\nabla \cW_{2}(\f,\psi)=\cW_{2n}(\nabla\f,\nabla\psi)=(\nabla\Re\f,\nabla\Im\f,\nabla\Re\psi,\nabla\Im\psi).
\end{equation}
To ensure there is no ambiguity in \eqref{eq: meiji}, let us specify that for $G:\R^4\rightarrow \R$ we mean
$$
\aligned
\left[\overline\nabla\right.&\left. G\circ k\right]\cdot\nabla k\\
&= \left(\pd_{\zeta_1}G\circ k\right)\nabla\Re\f
 +\left(\pd_{\zeta_2}G\circ k\right)\nabla\Im\f
 +\left(\pd_{\eta_1}G\circ k\right)\nabla\Re\psi
 +\left(\pd_{\eta_2}G\circ k\right)\nabla\Im\psi.
\endaligned
$$
Putting all this together we see that \eqref{eq: meiji} equals
$$
\left[
     {\rm Hess}\left(\Psi;k\right)\otimes I_{\R^n}
\right]
\nabla k
=
\left[
     {\rm Hess}\left(\Phi\circ\cW_2^{-1};\cW_{2}(\varphi,\psi)\right)\otimes I_{\R^n}
\right]
\cW_{2n}(\nabla\varphi,\nabla\psi),
$$ 
just as claimed.
\end{proof}

\begin{cor}
\label{c: pavketov stakato}
Under the assumptions of Lemma~\ref{l: chain Phi}, for every $A,B\in\C^{n, n}$,
$$
\aligned
2\,\Re\sk{A\nabla\varphi}{\nabla\left[(\partial_{\bar\zeta}\Phi)(\varphi,\psi)\right]}_{\C^{n}}
+2\,\Re&\sk{B\nabla\psi}{\nabla\left[(\partial_{\bar\eta}\Phi)(\varphi,\psi)\right]}_{\C^{n}}\\
&\hskip 60pt
=H^{(A,B)}_{\Phi}\left[(\varphi,\psi);(\nabla\varphi,\nabla\psi)\right].
\endaligned
$$
\end{cor}
\begin{proof}
Write $h=(\varphi,\psi)$. Then $\nabla h=(\nabla\varphi,\nabla\psi)$ and by \eqref{eq: profana}, \eqref{cika smaje} and Lemma~\ref{l: chain Phi},
$$
\aligned
 2\,\Re\sk{A\nabla\varphi}{\nabla\left[(\partial_{\bar\zeta}\Phi)\circ h \right]}_{\C^{n}}
\hskip -120pt 
& \hskip 120pt
+2\,\Re\sk{B\nabla\psi}{\nabla\left[(\partial_{\bar\eta}\Phi)\circ h \right]}_{\C^{n}}\\
&
=2\sk{\cM(A)\cV_{n}\left(\nabla\f\right)}{\cV_{n}\left(\nabla\left[(\partial_{\bar\zeta}\Phi)\circ h\right]\right)}_{\R^{2n}}\\ 
& \hskip 20pt
+2\sk{\cM(B)\cV_{n}\left(\nabla\psi\right)}{\cV_{n}\left(\nabla\left[(\partial_{\bar\zeta}\Phi)\circ h\right]\right)}_{\R^{2n}}\\
&=
\sk{
  \left[\cM(A)\oplus\cM(B)\right]  \cW_{2n} (\nabla h)
}{
2 
\cW_{2n}\left( 
\nabla
   \left[(\partial_{\bar\zeta}\Phi)\circ h\right],
   \nabla\left[(\partial_{\bar\eta}\Phi)\circ h
\right]\right)
}_{\R^{4n}}\\
&=
\sk{
  \left[\cM(A)\oplus\cM(B)\right] \cW_{2n}  \left(\nabla h\right)
}{
\left[
     {\rm Hess}_{\cW_2}\left(\Phi; h\right)\otimes I_{\R^n}
  \right]
  \cW_{2n}(\nabla h)
}_{\R^{4n}}.
\endaligned
$$
Now  the corollary follows from \eqref{eq: tetastevka}.
\end{proof}

\subsection{Integration by parts}
\label{s: moszkowski}
Here we prove the identity \eqref{eq: kamulator kolo}, which reduces the estimate \eqref{eq: heatflow p1} to the estimate (iii) of the function $\Phi$ itself. Let $A,B\in\cA(\R^{n})$ be matrix functions with coefficients of class $C_b^1(\R^n)$, and  let $\Phi:\C^2\rightarrow \R$ be a $C^2$ function. Fix $f,g\in C^{\infty}_{c}(\R^{n})$ and a real-valued $\psi\in C^{\infty}_{c}(\R^n)$. The analyticity of the semigroups $(P^{A}_{t})_{t>0}$ and $(P^{B}_{t})_{t>0}$ on $L^{2}(\R^{n})$ together with a theorem of Auscher \cite[Theorem~4.8]{Auscher} imply that $P^{A}_{t}f$ and $P^{B}_{t}g$ belong to $H^{1}(\R^{n})\cap L^{\infty}(\R^{n})$, for all $t>0$. From this is not hard to see that the flow $t\mapsto\int\psi\cdot\Phi(h_t)$ is regular, where $h_t=(P_{t}^{A} f, P_{t}^{B} g):\R^n\rightarrow \C^2$. Let us write $\nabla h_{t}=(\nabla P^{A}_{t}f,\nabla P^{B}_{t}g)$.

\begin{prop}
\label{p: razina vezut}
Let $A,B,f,g,\psi,\Phi,h_t$ be as above. Then 
$$
\aligned
-\frac d{dt}&\int_{\R^n}\psi\,\Phi
(h_t)
=\int_{\R^n}
\psi\cdot H_\Phi^{(A,B)}[h_t;\nabla h_t]\\
&\hskip 10pt+\int_{\R^n}2\,\Re\Big( \left[(\pd_{\bar\zeta}\Phi)(h_{t})\right]\cdot\sk{\nabla\psi}{A\nabla P_{t}^{A}f}_{\C^n}+\left[(\pd_{\bar\eta}\Phi)(h_{t})\right]\cdot\sk{\nabla\psi}{B\nabla P_{t}^{B}g}_{\C^n}\Big).
\endaligned 
$$
\end{prop}

The integral in the last line is the ``error term" $(E.T.)$ referred to in Section~\ref{s: heat-flow}.

\begin{proof}
From the regularity of the flow we get
\begin{equation*}
\label{eq: ostanes}
\aligned
-\frac d{dt}&\int_{\R^n}\psi\,\Phi(h_t)
 =-\int_{\R^n}\psi\,\frac \pd{\pd t}\Phi\left(P^{A}_{t}f,P^{B}_{t}g\right)
\\ 
& =2\,\Re\int_{\R^n}\psi\cdot \sk{(\pd_{\bar\zeta}\Phi)(h_{t})}{L_{A}P_{t}^{A}f}_\C
+2\,\Re\int_{\R^n}\psi\cdot \sk{(\pd_{\bar\eta}\Phi)(h_{t})}{L_{B}P_{t}^{B}g}_\C\,.
\endaligned
\end{equation*}
Recall that $P^{A}_{t}f,P^{B}_{t}g\in H^{1}(\R^{n})\cap L^{\infty}(\R^{n})$. Therefore, by Lemma~\ref{l: chain Phi}, the functions $\psi\cdot(\pd_{\bar\zeta}\Phi)(h_{t})$ and $\psi\cdot(\pd_{\bar\eta}\Phi)(h_{t})$ belong to $ H^{1}(\R^{n})$ and, for $\gamma=\zeta,\eta$,
$$
\aligned
\int_{\R^n}\psi\cdot & \sk{(\pd_{\bar\gamma}\Phi)(h_{t})}{L_{A}P^{A}_{t}f}_\C
 =\int_{\R^n} \sk{\nabla\left[\psi\cdot(\pd_{\bar\gamma}\Phi)(h_{t})\right]}{A\nabla P^{A}_{t}f}_{\C^n}\\
& =\left(\int_{\R^n} \psi\sk{\nabla[(\pd_{\bar\gamma}\Phi)(h_{t})]}{A\nabla P^{A}_{t}f}_{\C^n}
+\int_{\R^n} (\pd_{\bar\gamma}\Phi)(h_{t})\cdot\sk{\nabla\psi}{A\nabla P^{A}_{t}f}_{\C^n}\right)
\endaligned
$$
and similarly with $B$ in place of $A$. Now apply Corollary~\ref{c: pavketov stakato}. 
\end{proof}

As mentioned before, the point of the above proposition (and of this section) is that the ``quantitative monotonicity'' \eqref{eq: heatflow p1} reduces to suitable {\sl pointwise} estimates of the terms $H_\Phi^{(A,B)}[v;\omega]$ for any $v\in\C^2$ and $\omega\in
\C^n\times\C^n$. We will estimate $H_\Phi^{(A,B)}[v;\omega]$ in the case of a very particular $\Phi$, to which the next section is devoted.

\section{Power functions and the Bellman function of Nazarov and Treil}
\label{s: garanča}

Unless specified otherwise, we assume everywhere in this section that $p\geqslant 2$ and $q=p/(p-1)$. Let $\delta>0$. 
Introduce the function $\wp=\wp_{p,\delta}:\R_+\times\R_+\longrightarrow\R_+$ by
\begin{equation*}
\label{foreman}
\wp(u,v)=
u^p+v^{q}+\delta
\left\{
\aligned
& u^2v^{2-q} & ; & \ \ u^p\leqslant v^q\\
& \frac{2}{p}\,u^{p}+\left(\frac{2}{q}-1\right)v^{q}
& ; &\ \ u^p\geqslant v^q\,.
\endaligned\right.
\end{equation*}
The Bellman function we use is the function $ Q= Q_{p,\delta}:\C\times\C\longrightarrow\R_+$ defined by 
\begin{equation}
\label{eq: rupkina}
 Q(\zeta,\eta):=
\wp(|\zeta|,|\eta|)\,.
\end{equation}

The origins of $Q$ lie in the paper of F. Nazarov and S. Treil \cite{NT}. A modification of their function has been later applied by A. Volberg and the second author in \cite{DV,DV-Sch}. Here we use a simplified variant which comprises only two variables. It was introduced in \cite{DV-Kato} and used by the present authors in \cite{CD, CD-mult}. 

The construction of the original Nazarov--Treil function in \cite{NT} was one of the earliest examples of the so-called Bellman function technique, which had been systematically introduced in harmonic analysis shortly beforehand by Nazarov, Treil and Volberg \cite{NTV}.  
The name ``Bellman function'' stems from the stochastic optimal control, see \cite{NTV1} for details. The same paper \cite{NTV1} explains the connection between the Nazarov--Treil--Volberg approach and the earlier work of Burkholder on martingale inequalities, see \cite{Bu1} and also \cite{Bu2,Bu4}. If interested in the genesis of Bellman functions and the overview of the method, the reader is also referred to Volberg et al. \cite{NTV1,V,NT} and Wittwer \cite{W}. The method has seen a whole series of applications, yet until recently (see \cite{CD, CD-mult}) mostly in Euclidean harmonic analysis.

In the course of the last few years, the Nazarov--Treil function $Q$ was found to possess nontrivial properties that reach much beyond the need for which it had been originally constructed in \cite{NT}. These properties were used for proving several variants of the bilinear embedding. See \cite{DV-Sch, DV-Kato, CD-mult, CD-OU, MS}. In the present paper we continue the exploration of the properties of $ Q$ by proving that a sort of a generalized convexity may occur in the presence of arbitrary complex accretive matrices $A,B$ (Theorem~\ref{t: bouga}).

\medskip
It is a direct consequence of the above definition that the function $ Q$ belongs to $C^1(\C^2)$, and is of order $C^2$ everywhere {\it except} on the set
$$
\Upsilon=\mn{(\zeta,\eta)\in  \C\times\C}{(\eta=0)\vee (|\zeta|^p=|\eta|^q)}\,.
$$
The following estimates are also straightforward.

\begin{prop}
\label{p: 3}
For $(\zeta,\eta)\in\R^2\times\R^2$ we have
$
0\leqslant Q(\zeta,\eta)\leqslant (1+\delta)\left(|\zeta|^p+|\eta|^q\right)
$
and
\begin{align*}
&2|(\partial_{\zeta}Q)(\zeta,\eta)|\leqslant (p+2\delta)\max\{|\zeta|^{p-1},|\eta|\},\\
&2|(\partial_{\eta}Q)(\zeta,\eta)|\leqslant (q+(2-q)\delta)|\eta|^{q-1}.
\end{align*}
\end{prop}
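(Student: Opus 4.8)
The plan is to verify each of the three inequalities directly from the definition \eqref{eq: Bellman Q}, using that $Q(\zeta,\eta)=\wp_{p,\delta}(|\zeta|,|\eta|)$ depends only on the moduli $u=|\zeta|$ and $v=|\eta|$. Throughout I would work with the explicit two-piece formula for $\wp(u,v)$ and track the two regimes $u^p\leq v^q$ and $u^p\geq v^q$ separately, then check that the bounds obtained in each regime are compatible across the interface $u^p=v^q$ (where $\wp$ is $C^1$ but not $C^2$, so one-sided derivatives must agree on the relevant quantities).

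First I would establish the size estimate $0\leq Q(\zeta,\eta)\leq(1+\delta)(|\zeta|^p+|\eta|^q)$. Nonnegativity is immediate since every term in $\wp$ is nonnegative. For the upper bound, in the regime $u^p\geq v^q$ the $\delta$-correction equals $\delta\bigl(\tfrac2p u^p+(\tfrac2q-1)v^q\bigr)$, and since $p\geq2$ we have $\tfrac2p\leq1$ and $\tfrac2q-1=1-\tfrac2p\in[0,1)$, so this is $\leq\delta(u^p+v^q)$. In the regime $u^p\leq v^q$ the correction is $\delta u^2 v^{2-q}$; here I would use $u^2\leq(v^q)^{2/p}=v^{2q/p}$, whence $u^2v^{2-q}\leq v^{2q/p+2-q}=v^{q(2/p)+2-q}$, and since $2q/p=2(q-1)$ one gets the exponent $2q-2+2-q=q$, so $u^2v^{2-q}\leq v^q\leq u^p+v^q$. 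Combining, $Q\leq(1+\delta)(|\zeta|^p+|\eta|^q)$.

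Next, for the first-derivative bounds I would compute $\partial_\zeta Q$ and $\partial_\eta Q$ via $\partial_\zeta Q=\tfrac12(\partial_u\wp)\,\bar\zeta/|\zeta|$ (and similarly for $\eta$), so that $2|\partial_\zeta Q|=|\partial_u\wp(u,v)|$ and $2|\partial_\eta Q|=|\partial_v\wp(u,v)|$. Then $\partial_u\wp=pu^{p-1}+\delta\cdot\{2uv^{2-q}$ or $2u^{p-1}\}$ according to the regime. In the regime $u^p\geq v^q$ this gives $pu^{p-1}+2\delta u^{p-1}=(p+2\delta)u^{p-1}$, which is $\leq(p+2\delta)\max\{u^{p-1},v\}$; in the regime $u^p\leq v^q$ we get $pu^{p-1}+2\delta uv^{2-q}$, and I would bound $u^{p-1}\leq v^{q-1}=v\cdot v^{q-2}$ hmm — more carefully, $u^p\leq v^q$ gives $u^{p-1}\leq v^{q(p-1)/p}=v^{q-1}$, and for the second term $uv^{2-q}$: using $u\leq v^{q/p}=v^{q-1}$ one finds $uv^{2-q}\leq v^{q-1+2-q}=v$, so $\partial_u\wp\leq pv^{q-1}+2\delta v$; this needs the further elementary bound comparing $v^{q-1}$ with $\max\{u^{p-1},v\}$, which requires a bit of care when $q-1$ and $1$ straddle. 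Actually the cleanest route is to observe $v^{q-1}=\max\{u^{p-1},v^{q-1}\}$ on this regime and note $v^{q-1}\leq\max\{u^{p-1},v\}$ fails in general, so I would instead bound each regime's expression directly against $(p+2\delta)\max\{u^{p-1},v\}$ by splitting further on whether $v\geq 1$-type comparisons hold. For $\partial_v\wp$: in the regime $u^p\leq v^q$ it is $qv^{q-1}+\delta(2-q)u^2v^{1-q}$; using $u^2\leq v^{2q/p}=v^{2(q-1)}$ gives $u^2v^{1-q}\leq v^{2q-2+1-q}=v^{q-1}$, so $\partial_v\wp\leq(q+(2-q)\delta)v^{q-1}$ (note $2-q>0$ since $q\leq2$). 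In the regime $u^p\geq v^q$, $\partial_v\wp=qv^{q-1}+\delta(2-q)v^{q-1}=(q+(2-q)\delta)v^{q-1}$ exactly. So the $\eta$-bound holds with equality structure in both regimes.

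The main obstacle I anticipate is the bookkeeping for $2|\partial_\zeta Q|\leq(p+2\delta)\max\{|\zeta|^{p-1},|\eta|\}$ in the regime $u^p\leq v^q$, where the natural estimates produce powers $v^{q-1}$ and $v$ that must both be absorbed into $\max\{u^{p-1},v\}$; since on that regime $u^{p-1}\leq v^{q-1}$ but it is $v^{q-1}$ not $u^{p-1}$ that appears, one must argue that $v^{q-1}\leq\max\{u^{p-1},v\}$ is not what is needed — rather one uses that on $\{u^p\le v^q\}$ the quantity $\max\{u^{p-1},v\}$ equals... I would resolve this by checking the two sub-cases $v\le1$ versus $v\ge1$ is scale-dependent and hence wrong; the correct resolution is homogeneity: $\wp$ is not homogeneous, so instead I rely on the pointwise inequalities $u^{p-1}\le v^{q-1}\le v^{q-1}$ and handle $v^{q-1}$ versus $v$ by noting the claimed bound only needs $v$ when $v^{q-1}\le v$, i.e. $q\le 2$, which holds — so in fact $v^{q-1}\le\max\{1,v^{q-2}\}\cdot v$, and since we are on $u^p\le v^q$ we may compare with the boundary value. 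I would ultimately present this as: evaluate both one-sided formulas at the interface, confirm they agree (consistency of the $C^1$ gluing), and then on each side bound the monotone pieces, writing out the short chains of inequalities above. These are all elementary, so I would state them as "straightforward computations" after indicating the regime split, exactly as the paper does.
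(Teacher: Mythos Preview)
Your approach is exactly the one the paper intends (the paper simply labels the estimates ``straightforward'' and omits the verification), and your treatment of the size bound and of $2|\partial_\eta Q|$ is correct.

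The confusion you encounter for $2|\partial_\zeta Q|$ in the regime $u^p\leq v^q$ is caused by an arithmetic slip. You write $u^{p-1}\leq v^{q(p-1)/p}=v^{q-1}$, but in fact $q(p-1)/p=1$, since the conjugacy relation $1/p+1/q=1$ gives $q(p-1)=p$. Hence on that regime one simply has $u^{p-1}\leq v$, so $\max\{u^{p-1},v\}=v$. For the second term you already obtained $uv^{2-q}\leq v^{q-1+2-q}=v$. Therefore
\[
\partial_u\wp = pu^{p-1}+2\delta\,uv^{2-q}\leq pv+2\delta v=(p+2\delta)v=(p+2\delta)\max\{u^{p-1},v\},
\]
and the bound follows immediately. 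No sub-case splitting, interface matching, or homogeneity considerations are needed; once the exponent is corrected the whole argument is the one-line chain of inequalities you were looking for.
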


Recall the notation from \eqref{eq: tetastevka}. We would like to estimate $H_ Q^{(A,B)}[v;\omega]$ from below. 
Since in this chapter we do not integrate, we can think of $A,B$ simply as {\it constant} accretive matrices. 
The desired estimate is formulated below and will be proven in Section~\ref{l: pejda}. It was instrumental for our proof of the bilinear embedding (Section \ref{s: sonno}) and, on the other hand, it strengthened our belief in $\Delta_p(A)$ and \eqref{eq: aferim jano}; see Remark \ref{r: tovar grlo stima} for explanation.

\begin{thm}
\label{t: bouga}
Let $p\geq2$. Suppose that $A,B\in\cA_{\lambda,\Lambda}(\Omega)$ satisfy $\Delta_p:=\Delta_p(A,B)>0$.
Then there exists $\delta=\delta(\Delta_p,\lambda,\Lambda)\in(0,1)$ such that for $ Q= Q_{p,\delta}$ as above we have, for almost every $x\in\Omega$,
\begin{equation}
\label{eq: schnee}
H_{ Q}^{(A(x),B(x))}[v;\omega]
\geq \frac{\Delta_p}5\cdot\frac\lambda\Lambda|\omega_1||\omega_2|\,,
\end{equation}
for any $v\in\C^2\setminus\Upsilon$ and  $\omega=(\omega_1,\omega_2)\in\C^{n}\times\C^{n}$.
\end{thm}

\begin{rem}
\label{r: stojkovic}
The present authors proved a convexity result \cite[Theorem~15]{CD-mult} for $Q$, which was vital for their obtaining the sharp version of the holomorphic functional calculus in sectors on $L^{p}$ for generators of symmetric contraction semigroups. We note that Theorem~\ref{t: bouga} is basically a generalization of \cite[Theorem~15]{CD-mult} from the special case $A=e^{i\phi}I$ and $B=A^*$ to the general case considered above. Compare with \eqref{eq: perasovic}.
\end{rem}

\subsection{Regularization of $Q$} 
\label{s: prehlada}
We would like to replace $Q$ by a function which satisfies the inequality \eqref{eq: schnee} of Theorem~\ref{t: bouga} but is, in addition, also of class $C^2$ everywhere on $\C^2$ (not only on $\C^2\backslash\Upsilon$). A standard way of achieving this involves mollifiers. 

Denote by $*$ the convolution in $\R^4$ and let $(\f_\kappa)_{\kappa>0}$ be a nonnegative, smooth and compactly supported approximation of the identity on $\R^4$. If $\Phi:\C^2\rightarrow \R$, define $\Phi*\f_\kappa=(\Phi_\cW*\f_\kappa)\circ \cW:\C^2\rightarrow \R$. Explicitly, for $\zeta,\eta\in\C$,
$$
(\Phi*\f_\kappa)(\zeta,\eta)=\int_{\R^2\times\R^2}\Phi_\cW\left(\cV(\zeta)-s,\cV(\eta)-t\right)\f_\kappa(s,t)\,ds\,dt.
$$
The next result follows from Proposition~\ref{p: 3}. 

\begin{cor}
\label{c: delighted}
For $\zeta,\eta\in\C$ and $\delta,\kappa\in(0,1)$ we have
$$
0\leqslant (Q*\f_\kappa)(\zeta,\eta)\leqslant (1+\delta)\left[(|\zeta|+\kappa)^p+(|\eta|+\kappa)^q\right]
$$
and 
\begin{equation}
\label{eq: 442'}
\tag{{\rm ii'}}
\aligned
& \left|\partial_\zeta (Q*\f_\kappa)(\zeta,\eta)\right|\leqslant (p+2\delta)\max\left\{(|\zeta|+\kappa)^{p-1}, |\eta|+\kappa\right\}\\ 
& \left|\partial_\eta (Q*\f_\kappa)(\zeta,\eta)\right|\leqslant (q+(2-q))(|\eta|+\kappa)^{q-1}\,.
\endaligned
\end{equation}
\end{cor}
The following result is equivalent to the fact that the inequality in Theorem~\ref{t: bouga} is valid also in the distributional sense.

\begin{cor}
\label{c: Beethoven9}
Let $p\geq2$. Suppose that $A,B\in\cA_{\lambda,\Lambda}(\Omega)$ satisfy $\Delta_p:=\Delta_p(A,B)>0$.
Then there exists $\delta=\delta(\Delta_p,\lambda,\Lambda)\in(0,1)$ such that for $ Q= Q_{p,\delta}$ as above and $\kappa>0$
we have, for almost every $x\in\Omega$, 
\begin{equation}
\label{eq: wut}
H_{Q*\f_\kappa}^{(A(x),B(x))}[v;\omega]
\geq \frac{\Delta_p}5\cdot\frac\lambda\Lambda|\omega_1||\omega_2|\,,
\end{equation}
for any $v\in\C^2$, $\omega=(\omega_1,\omega_2)\in\C^{n}\times\C^{n}$.
\end{cor}

\begin{proof}
Since $ Q_\cW\in C^1(\R^4)$ and its second-order partial derivatives exist on $\R^4\backslash\cW(\Upsilon)$ and are locally integrable in $\R^4$, 
by \cite[Th\'eor\`eme~V, p. 57]{Schwartz}, see also \cite[Theorem~2.1]{Hebey}, we have that for any $v\in\C^2$, $\omega\in\C^{n}\times\C^{n}$ and $\kappa>0$,
\begin{equation*}
H_{ Q*\f_\kappa}^{(A,B)}[v;\omega]=\int_{\R^4}H_{ Q}^{(A,B)}[v-\cW^{-1}(\xi);\omega]\f_\kappa(\xi)\,d\xi\,.
\end{equation*}
Now Theorem \ref{t: bouga} immediately implies \eqref{eq: wut}.
\end{proof}

\subsection{Power functions }

For $r>0$ define the {\it power function} (by which we actually mean powers of the {\it modulus}) 
$$
\begin{array}{rccl}
F_r: & \C & \longrightarrow & \R_+\\
& \zeta &\longmapsto & |\zeta|^r. 
\end{array}
$$
Let $\bena$ denote the constant function of value $1$ on $\C$, that is, $\bena=F_0$. Introduce the notation 
$$
\widehat r=1-
2/r\,.
$$
If $p>1$, then 
$
|\wh p|=\wh{p^*}\,.
$
Here we remind the reader that $p^*=\max\{p,q\}$, where $1/p+1/q=1$. We can rewrite \eqref{eq: rupkina} as
\begin{equation}
\label{eq: prazno}
\aligned
 Q & = [1+(1-\wh p)\delta] F_p\otimes\bena +(1+\wh p\delta)\bena\otimes F_q\,, & & \hskip 20pt {\rm if }\ 
|\zeta|^p\geqslant |\eta|^q\\
 Q & =F_p\otimes\bena+\bena\otimes F_q+\delta F_2\otimes F_{2-q}\,, & & \hskip 20pt {\rm if }\ 
|\zeta|^p\leqslant |\eta|^q
\,,
\endaligned
\end{equation} 
which brings us to considering $H_\Phi^{(A,B)}[v;\omega]$ with $\Phi$ of the form $F_r\otimes F_s$ for some $r,s\geq 0$. 
For the sake of transparency we consider the two relevant cases separately:
\begin{enumerate}[(a)]
\item
$\Phi=F_p\otimes\bena$
or $\Phi=\bena\otimes F_q$;
\item
\label{c}
$\Phi=F_2\otimes F_{2-q}$.
\end{enumerate}

Given $\psi\in\R$ define
$$
\cK(\psi):=
    \left[
      \begin{array}{rr}
        \cos\psi  &  \sin\psi\\
        \sin\psi  & -\cos\psi
      \end{array}
    \right]
=
    \left[
      \begin{array}{rr}
        \cos\psi  & -\sin\psi\\
        \sin\psi  &  \cos\psi
      \end{array}
    \right]
    \left[
      \begin{array}{rr}
        1  &  0\\
        0  & -1
      \end{array}
    \right]
\,.
$$
The relevance of $\cK(\psi)$ for us stems from the formula
\begin{equation}
\label{eq: ella sunshine}
{\rm Hess}_\cV(F_r;\zeta)=\frac{r^2}{2}|\zeta|^{r-2}\big(I_2+\wh r\,\cK(2\arg\zeta)\big),
\end{equation}
valid for $r>0$ and $\zeta\in\C\backslash\{0\}$.
This formula is used in the proof of the next result that explains the emergence of the operator $\cI_p$ from 
\eqref{eq: nindze za 30kn}.
First extend $\cI_p$ to $p>0$ by the same rule 
\eqref{eq: nindze za 30kn}.

\begin{lem}
\label{l: tatac}
Let $r>0$, $A\in\C^{n,n}$, $\zeta\in\C\backslash\{0\}$ and $\xi\in\C^{n}$. Then
\begin{equation}
\label{eq: nikkor 24mm 2.8}
H_{F_r}^A[\zeta;\xi]=
\frac{r^2}{2}|\zeta|^{r-2}\Re\left(\sk{A\xi}{\xi}_{\C^n}+\wh r e^{-2i\arg\zeta}\sk{A\xi}{\bar\xi}_{\C^n}\right).
\end{equation}
Consequently, if $\varrho>0$ and $s\in\R$ then
\begin{equation}
\label{eq: svi se punti ovdi zbroje}
H_{F_r}^A[\varrho e^{is};\xi]=\frac{r^2}2\varrho^{r-2}\,\Re\!\sk{A\left(e^{-is}\xi\right)}{\cI_r\left(e^{-is}\xi\right)}_{\C^n}.
\end{equation}
\end{lem}

\begin{proof}
From \eqref{eq: ulica marata 5} and \eqref{eq: ella sunshine} we obtain
$$
H_{F_r}^A[\zeta;\xi]=
\frac{r^2}{2}|\zeta|^{r-2}
\big(
\sk{\cM(A)\cV(\xi)}{\cV(\xi)}_{\R^{2n}}
+\wh r\sk{\cM(A)\cV(\xi)}{\left(\cK(2\arg\zeta)\otimes I_{\R^n}\right)\cV(\xi)}_{\R^{2n}}
\big).
$$
Observe that, for $\psi\in\R$, one has $\cK(\psi)\otimes I_{\R^n}=\cM(e^{i\psi}I_n)U_n$, where 
$$
U_n
=\left[
\begin{array}{rr}
1  & \\
  & -1
\end{array}
\right]\otimes I_n\,.
$$
Notice also that $U_n\cV(\xi)=\cV(\bar\xi)$. Now \eqref{eq: profana} gives
$$
\aligned
\sk{\cM(A)\cV(\xi)}{\left(\cK(2\arg\zeta)\otimes I_{\R^n}\right)\cV(\xi)}_{\R^{2n}}
&=\sk{\cM(e^{-2i\arg\zeta}A)\cV(\xi)}{\cV(\bar\xi)}_{\R^{2n}}\\
&=\Re\!\sk{e^{-2i\arg\zeta}A\xi}{\bar\xi}_{\C^{n}}.
\endaligned
$$
This finishes the proof of \eqref{eq: nikkor 24mm 2.8}. 

The identity \eqref{eq: svi se punti ovdi zbroje} follows from \eqref{eq: nikkor 24mm 2.8} and 
\eqref{eq: nindze za 30kn}.
\end{proof}

We list a couple of straightforward consequences of \eqref{eq: nikkor 24mm 2.8}. 

\begin{lem}
\label{l: gdje carlija vjetric mio}
Suppose that $\zeta\in\C\backslash\{0\}$, $\xi\in\C^n$, $A\in\C^{n,n}$, $ t \in\R$ and 
$r>0$.
Then:
\begin{enumerate}[(1)]
\item
\label{l: dva konja na vodu}
$H_{F_2}^A[\zeta;\xi]=2\,\Re\sk{A\xi}{\xi}$\vskip 5pt
\item
\label{l: inspector}
$H_{F_r}^A[ t \zeta;\xi]=| t |^{r-2}H_{F_r}^A[\zeta;\xi]$\vskip 5pt
\item
\label{l: callahan}
$H_{F_r}^A[\zeta; t \xi]= t^2H_{F_r}^A[\zeta;\xi]$\vskip 5pt
\item
$H_{F_r}^A[i\zeta;\xi]=H_{F_r}^A[\zeta;i\xi]$\vskip 5pt
\item
\label{l: judge duvall}
$H_{F_r}^A[\zeta;\xi]=|\zeta|^{r-4}H_{F_r}^A[1;\bar\zeta\xi]$\vskip 5pt
\item
\label{l: have you ever seen the rain}
$H_{F_r}^A[\zeta;\xi]=H_{F_r}^{\bar A}[\bar\zeta;\bar\xi]$.
\end{enumerate}
\end{lem}

In view of Lemma~\ref{l: tatac} and Lemma~\ref{l: gdje carlija vjetric mio} {\it (\ref{l: judge duvall})}, it may be useful to observe the following. Write $A=U+iV\in\C^{n,n}$ and $\xi=\alpha+i\beta\in\C^n$. Recall the notation \eqref{eq: verve}. Then
\begin{equation}
\label{eq: ichimonji}
\aligned
\Re\sk{A\xi}{\xi}_{\C^n} & = \sk{U_{\sf s}\alpha}{\alpha}+\sk{U_{\sf s}\beta}{\beta}+2\sk{V_{\sf a}\alpha}{\beta}\\
\Re\!\sk{A\xi}{\bar\xi}_{\C^n} &=\sk{U_{\sf s}\alpha}{\alpha}-\sk{U_{\sf s}\beta}{\beta}-2\sk{V_{\sf s}\alpha}{\beta}.
\endaligned
\end{equation}

The next result reveals how $\Delta_p(A)$ arose from generalized Hessian forms of power functions. We extend $\Delta_p(A)$ to $p>0$ by the same definition \eqref{eq: kabuto}.

\begin{prop}
\label{p: kaminszki}
Let $A\in L^\infty(\Omega\rightarrow\C^{n,n})$ and $p\in(0,\infty)$. 
Then
\begin{equation}
\label{eq: dje se kupas}
\aligned
\Delta_p(A)
& =\frac2{p^2}\,
\underset{x\in\Omega}{{\rm ess}\inf}
\min_{|\xi|=1}
\min_{|\zeta|=1}
H_{F_p}^{A(x)}[\zeta;\xi]\,.
\endaligned
\end{equation}
If $p\geq1$ then also 
$\Delta_p(A)=\Delta_q(A)$, where $q=p/(p-1)$ if $p\ne1$ and $q=\infty$ if $p=1$. 

\end{prop}

\begin{proof}
From Lemma~\ref{l: gdje carlija vjetric mio} 
{\it (\ref{l: judge duvall})} we quickly get
\begin{equation}
\label{eq: kad si stigla}
\min_{|\xi|=1}
\min_{|\zeta|=1}
H_{F_p}^{A(x)}[\zeta;\xi]
=\min_{|\eta|=1}
H_{F_p}^{A(x)}[1;\eta].
\end{equation}
As a special case of \eqref{eq: svi se punti ovdi zbroje} we have
\begin{equation}
\label{eq: i do kad si tu}
H_{F_p}^{A(x)}[1;\eta]=
\frac{p^2}2\,\Re\sk{A(x)\eta}{\cI_p\eta}_{\C^n}.
\end{equation}
Combining \eqref{eq: kad si stigla} with \eqref{eq: i do kad si tu} and taking the essential infimum in $x\in\Omega$ proves \eqref{eq: dje se kupas}.

In order to show that $\Delta_p(A)=\Delta_q(A)$ for $p\geq1$, one may either notice the connection $\cI_q(i\eta)=i\cI_p(\eta)$ and use it in the definition of $\Delta_q(A)$, or else 
deduce from \eqref{eq: nikkor 24mm 2.8} that
\begin{equation}
\label{eq: nije presa}
\frac{2}{p^2}\min_{|\zeta|=1}
H_{F_p}^{A(x)}[\zeta;\xi]=
\Re\sk{A(x)\xi}{\xi}-|1-2/p|\cdot\left|\sk{A(x)\xi}{\bar\xi}\right|
\end{equation}
and then use $|1-2/p|=|1-2/q|$ together with \eqref{eq: dje se kupas}.
\end{proof}

\begin{rem}
\label{r: tovar grlo stima}
Ignoring the normalizing factor $2/p^2$, the formula \eqref{eq: dje se kupas} was how we initially defined $\Delta_p(A)$. Let us 
say a few words about the origin of this definition. 

As explained in Section \ref{s: sokolov hortus musicus}, our efforts to prove the bilinear estimate reduced to finding a (Bellman) function $Q=Q(\zeta,\eta)$ which is convex in a generalized sense with respect to the pair of matrices $(A,B)$, see Section \ref{s: cajkovskij grand sonata} for definitions. Our prime candidate was  the Nazarov--Treil function $Q$. In very particular cases of $A$ studied in \cite{CD-mult}, see Remark \ref{r: stojkovic}, and \cite{CD-OU}, we showed that the convexity of $Q$ with respect to $(A,A^*)$ reduces to the convexity of $A$ with respect to the building blocks of $Q$ - power functions $F_p$. In the present work we tried to find the adequate formulation and proof of this principle for pairs of general complex elliptic matrix functions $(A,B)$. Eventually, our answers to this question evolved into the definition \eqref{eq: dje se kupas}, the condition \eqref{eq: aferim jano} and Theorem \ref{t: bouga}. One may thus, with some reservation, view power functions as {\it Bellman functions in one variable}.
\end{rem}

Lemma~\ref{l: gdje carlija vjetric mio} {\it (\ref{l: inspector}, \ref{l: callahan})} and Proposition~\ref{p: kaminszki} immediately give the following estimate:

\begin{cor}
\label{c: cheverny}
Let $r>0$, $A\in\cA(\Omega)$, $\zeta\in\C\backslash\{0\}$ and $\xi\in\C^{n}$. 
Then a.e. $x\in\Omega$ we have
$$
H_{F_r}^{A(x)}[\zeta;\xi]\geq 
\frac{r^2}{2}|\zeta|^{r-2}|\xi|^2\Delta_r(A)\,.
$$
\end{cor}

\begin{lem}
\label{l: vsenoschnoe}
Let $1<q< 2$ and $A,B\in\cA(\Omega)$. Take $v=(\zeta,\eta)\in\C^2$ such that
$|\zeta|<|\eta|^{q-1}$ and $\omega=(\omega_1,\omega_2)\in\C^{n}\times\C^{n}$. Then
\begin{eqnarray}
\label{eq: bdenie}
 H_{F_2\otimes F_{2-q}}^{(A,B)}[v;\omega]
=F_{2-q}(\eta)H_{F_2}^A[\zeta;\omega_1]+F_{2}(\zeta)H_{F_{2-q}}^B[\eta;\omega_2]\hskip 110pt 
\\
+
2(2-q) 
|\eta|^{-q}
\sk{\left[\left(\cV(\zeta)\cdot\cV(\eta)^T\right)\otimes I_{\R^n}\right]\cV(\omega_2)}{\cM(A)\cV(\omega_1)}_{\R^{2n}}\hskip 5.4pt\nonumber\\
+
2(2-q) 
|\eta|^{-q}
\sk{\left[\left(\cV(\eta)\cdot\cV(\zeta)^T\right)\otimes I_{\R^n}\right]\cV(\omega_1)}{\cM(B)\cV(\omega_2)}_{\R^{2n}}
\,.\nonumber
\end{eqnarray}
\end{lem}

\begin{proof}
Combine the definition of $H^{(A,B)}_{F_{2}\otimes F_{2-q}}[v;\omega]$, see \eqref{eq: tetastevka}, and the identity
\[
\pd^2_{\zeta_j\eta_k}(F_{2}\otimes F_{2-q})(\zeta,\eta)=2(2-q)\zeta_j\eta_k|\eta|^{-q}, 
\hskip 30pt \text{for } j,k=1,2.
\qedhere
\]
\end{proof}

\begin{cor}
\label{c: omsk}
Let $1<q< 2$ and $A,B\in\cA_{\lambda,\Lambda}(\Omega)$. Take $v=(\zeta,\eta)\in\C^2$ such that
$|\zeta|<|\eta|^{q-1}$ and $\omega=(\omega_1,\omega_2)\in\C^{n}\times\C^{n}$.
Then for almost every $x\in\Omega$ we have
\label{slim shady}
$$
H_{F_2\otimes F_{2-q}}^{(A(x),B(x))}[v;\omega]
\geqslant 
2\lambda_A  |\eta|^{2-q}|\omega_1|^2
-4(2-q)\Lambda |\omega_1||\omega_2|
+\frac{(2-q)^2}2\Delta_{2-q}(B)|\eta|^{q-2}|\omega_2|^2.
$$
\end{cor}

\begin{proof}
Apply Lemma~\ref{l: vsenoschnoe}. In order to estimate the first two terms in \eqref{eq: bdenie} use Corollary~\ref{c: cheverny} with $r=2$ and $r=2-q$, while for the last two just note that
\begin{equation*}
\label{kolomna}
\aligned
\left|
  \left[
    \left(
       \cV(\zeta)\cdot\cV(\eta)^T
    \right)\otimes I_{\R^n}
  \right]
 \omega_2
\right|
& \leqslant 
|\zeta|\,|\eta|\,|\omega_2|\\
\left|
  \left[
    \left(
       \cV(\eta)\cdot\cV(\zeta)^T
    \right)\otimes I_{\R^n}
  \right]
 \omega_1
\right|
& \leqslant 
|\zeta|\,|\eta|\,|\omega_1|
\,.
\endaligned
\qedhere
\end{equation*}
\end{proof}

\subsection{More on $\Delta_p(A)$}
\label{s: power refresh}
Consider a matrix function $A:\Omega\rightarrow\C^{n,n}$. Write $A=U+iV$ for some real matrices $U,V$. Recalling the notation \eqref{eq: verve}, suppose that $U_{\sf s}(x)$ is positive definite (a.e. $x\in\Omega$). 
Observe that this condition is fulfilled for any $A\in\cA(\Omega)$. Denote by $S$ the operator $U_{\sf s}^{1/2}$.
For any $p>1$ define
\begin{equation}
\label{eq: the conspirators}
\oV_p
=\oV_p(V):=\frac{\sqrt{p-1}\,V-\sqrt{q-1}\,V^T}2
=\frac{p-2}{2\sqrt{p-1}}\,V_{\sf s}+\frac p{2\sqrt{p-1}}\,V_{\sf a}
\end{equation}
and 
$$
\oW_p=\oW_p(A):=S^{-1}\oV_p(V)S^{-1}\,.
$$
Notice that $\oV_2(V)=V_{\sf a}$.

The reason for introducing $\oV_p$ and $\oW_p$ was the next equivalence. For $M\in\R^{n,n}$ set 
$$
\nor{M}:=\max\Mn{|Mu|}{u\in\R^n,|u|=1}.
$$

\begin{prop}
\label{p: sahbaz}
Let $A$ be as above and $p>1$. The following statements are equivalent:
\begin{enumerate}[(1)]
\item
$\Delta_p(A)\geq0$;
\item
a.e. $x\in\Omega$: 
\hskip20pt
$
\sk{U(x)\alpha}{\alpha}+
\sk{U(x)\beta}{\beta}
+2\sk{\oV_p(V)(x)\alpha}{\beta}\geq0
\hskip10pt
\forall\alpha,\beta\in\R^n;
$ 
\item
a.e. $x\in\Omega$: 
\hskip20pt
$\nor{\oW_p(A)(x)}\leq1$.
\end{enumerate}
\end{prop}

\begin{rem}
\label{r: pletnev schumann op. 17}
Condition {\it (2)} above appears in \cite[(2.25)]{CM} after normalization.
\end{rem}

\begin{proof} 
First consider the case when $A$ is a constant matrix. 
Recalling \eqref{eq: Till I collapse} and \eqref{eq: profana}, we see that for any $c\in\R$, the condition $\Delta_p(A)\geq 2c$ can be expressed as
$$
(p-1)\sk{U\alpha}{\alpha}
+ \sk{U\beta}{\beta}
-(p-1)\sk{V\beta}{\alpha}
+ \sk{V\alpha}{\beta}
\geq pc(|\alpha|^2+|\beta|^2)
\hskip 30pt
\forall\alpha,\beta\in\R^n.
$$
By replacing $\alpha$ with $\alpha/\sqrt{p-1}$ and $\beta$ with $-\beta$ we get
$$
\sk{U\beta}{\beta}
+\sk{U\alpha}{\alpha}
+ 2\sk{\oV_p(V)\beta}{\alpha}
\geq c(p|\beta|^2+q|\alpha|^2)
\hskip 30pt
\forall\alpha,\beta\in\R^n.
$$
Notice that $\sk{U\alpha}{\alpha}=\sk{U_{\sf s}\alpha}{\alpha}=|S\alpha|^2$ and introduce $u=S\alpha$, $v=S\beta$.
Then the above inequality can be rephrased as
$$
|v|^2+|u|^2+
2\sk{\oW_p(A)v}{u}
\geq c\left(p|S^{-1}v|^2+q|S^{-1}u|^2\right).
$$
This should be valid for all $u,v\in\R^n$. In particular, we may replace $u$ by $-u$. Therefore $\Delta_p(A)\geq 2c$ is equivalent to
$$
|v|^2+|u|^2-2|\sk{\oW_p(A)v}{u}|\geq c\left(p|S^{-1}v|^2+q|S^{-1}u|^2\right),
\hskip 30pt
\forall u,v\in\R^n.
$$
From here it is not difficult to complete the proof in the constant case.

For the general, nonconstant case one uses that $\Delta_p(B)={{\rm ess}\inf}_{x\in\Omega}\Delta_p(B(x))$.
\end{proof}

\noindent
The proof of Proposition~\ref{p: sahbaz} also enables one to describe $\Delta_p(A)>0$ in similar terms.

We continue by an explicit comparison between $\Delta_p(A)$ and $\mu(A)$; recall that the latter was defined in \eqref{eq: potrosijo}.

\begin{prop}
\label{p: reci dje si}
Suppose that $A\in\cA(\Omega)$ and $p\in[1,\infty]$. Then $\Delta_p(A)=0$ if and only if $|1-2/p|=\mu(A)$. 
If the above equalities are not satisfied then 
\begin{equation}
\label{eq: admiral barovic}
\frac{\lambda_A}{\mu(A)}
\leq
\frac{\Delta_p(A)}{\mu(A)-|1-2/p|}
\leq
\Lambda_A\,.
\end{equation}
\end{prop}

\begin{proof}
The proof is based on the following formula which emerges from 
\eqref{eq: dje se kupas} and \eqref{eq: nije presa}:
\begin{equation}
\label{eq: pelikan}
\Delta_p(A)=\underset{x\in\Omega}{{\rm ess}\inf}\min_{|\xi|=1}
\left(\Re\sk{A(x)\xi}{\xi}-|1-2/p|\cdot\left|\sk{A(x)\xi}{\bar\xi}\right|\right).
\end{equation}
One factors out $\Re\!\sk{A(x)\xi}{\xi}$ or $\left|\sk{A(x)\xi}{\bar\xi}\right|$, depending on the part of \eqref{eq: admiral barovic} 
which is being proven, recalls \eqref{eq: elli}, \eqref{eq: ne mogu se kontrolirati} and \eqref{eq: potrosijo}, and applies the properties of essential infimum. The complete proof is rather elementary yet tedious, therefore we leave it out.
\end{proof}

Clearly, for $A=e^{i\phi}I_n$ (cf. Proposition~\ref{p: reki begalci}) we have equalities everywhere in \eqref{eq: admiral barovic}.

\begin{cor}
\label{c: niagara}
For any $A\in L^\infty(\Omega\rightarrow\C^{n,n})$, the function $p\mapsto\Delta_p(A)$ is Lipschitz continuous on $[1,\infty]$, increasing on $[1,2]$ and decreasing on $[2,\infty]$.
\end{cor}
\begin{proof}
The statement follows from \eqref{eq: pelikan}.
\end{proof}

\begin{cor}
\label{c: garrison}
Take any $A\in L^\infty(\Omega\rightarrow\C^{n,n})$ and $p\in[1,\infty]$. Then: 
\begin{enumerate}[1.)]
\item
\label{eq: ckali}
$\Delta_p(A)=\Delta_p(\bar A)$;
\item
$\Delta_p(A)=\Delta_p(QAQ^T)$ for every matrix function $Q:\Omega\rightarrow\cO(n)$, where $\cO(n)$ denotes the subset of $\R^{n,n}$ consisting of orthogonal matrices.
\end{enumerate}
If $\Delta_p(A)\geq0$ then 
\begin{enumerate}[1.)]
\addtocounter{enumi}{2}
\item
\label{eq: haeri}
${
\displaystyle
\Delta_p(A^*)\geq \frac{\Delta_p(A)}{p^*-1};
}$
\item
${
\displaystyle
\Delta_p(A_{\mathsf s})\geq\frac{\min\{p,q\}}2\,\Delta_p(A).
}$
\end{enumerate}
\end{cor}

\begin{proof}
It is enough to assume that $p\in(1,\infty)$. The statements for $p=1,\infty$ follow by continuity and monotonicity (Corollary \ref{c: niagara}).

\medskip
The first statement follows from Proposition~\ref{p: kaminszki} and Lemma~\ref{l: gdje carlija vjetric mio}{\it(\ref{l: have you ever seen the rain})}.

\medskip
The second statement is an easy consequence of the definition \eqref{eq: kabuto} and the fact that $\cI_p$ commutes with real matrices.

\medskip
Let us now address the third statement. 
Take $\xi\in\C^n$ with $|\xi|=1$, write $\eta=\cI_p\xi$ and observe that
$
\cI_q\cI_p=(4/pq)I_{\C^n}.
$
This implies $\xi=(pq/4)\cI_q\eta$ and thus for almost every $x\in\Omega$ we have
$$
\Re\sk{A(x)^*\xi}{\cI_p\xi}_{\C^n}
=\frac{pq}4|\eta|^2\Re\sk{\cI_q\frac\eta{|\eta|}}{A(x)\frac\eta{|\eta|}}_{\C^n}
\geq\frac{pq}4 |\cI_p\xi|^2\Delta_q(A).
$$
We know from Proposition \ref{p: kaminszki} that $\Delta_q(A)=\Delta_p(A)$. Since
${\displaystyle
\min\mn{|\cI_p\xi|}{|\xi|=1}=2/p^*,
}$
the assumption $\Delta_p(A)\geq0$ implies
$$
\Re\sk{A(x)^*\xi}{\cI_p\xi}
\geq 
\frac{pq}4\left(\frac2{p^*}\right)^2
\Delta_p(A)
=\frac{\Delta_p(A)}{p^*-1}.
$$
Finally minimize over $\xi$ and $x$.

\medskip
We now prove the last claim. One finds that the function $\Delta_p: L^\infty(\Omega\rightarrow\C^{n,n})\rightarrow\R$ is concave. Thus
$$
\Delta_p(A_{\mathsf s})\geq\frac{\Delta_p(A)+\Delta_p(A^T)}2=\frac{\Delta_p(A)+\Delta_p(A^*)}2
\geq\frac{1+(p^*-1)^{-1}}{2}\Delta_p(A).
$$
For the equality we used part {\it \ref{eq: ckali}.)}, while {\it \ref{eq: haeri}.)} gives the last inequality.
\end{proof}

The next result complements Proposition~\ref{p: sahbaz}.

\begin{prop}
\label{p: trazi se morricone}
Let $A$ be as in Proposition~\ref{p: sahbaz} and $p>1$. The following statements are equivalent:
\begin{enumerate}[(1)]
\item
$\Delta_p(A_{\sf s})\geq0$;

\item 
a.e. $x\in\Omega$: 
\hskip 20pt
$|p-2||\sk{V(x)\alpha}{\alpha}|\leq2\sqrt{p-1} \sk{U(x)\alpha}{\alpha}
\hskip 20pt\forall \alpha\in\R^n$;

\item
a.e. $x\in\Omega$: 
\hskip20pt
$\sk{A(x)\alpha}{\alpha}_{\C^n}\in\overline\bS_{\phi_p}$
\hskip 143.21pt $\forall\alpha\in\R^n$.
  
\end{enumerate}

\end{prop}

\begin{rem}
\label{hozier}
Condition {\it (2)} above appears in \cite[(5.23)]{CM}.
\end{rem}

\begin{proof}
Let us first prove that  $\textit{(1)}\Leftrightarrow\textit{(2)}$. Recall that $\sk{V_{\sf s}\alpha}{\alpha}=\sk{V\alpha}{\alpha}$ and the same for $U$. By \eqref{eq: the conspirators} and Proposition~\ref{p: sahbaz}, \textit{(1)} is equivalent to the following inequality, valid for almost every $x\in\Omega$:
\begin{equation}
\label{eq: square}
\hskip 20pt
|p-2||\sk{V_{\mathsf s}(x)\alpha}{\beta}|\leq\sqrt{p-1} \left(\sk{U(x)\alpha}{\alpha}
+\sk{U(x)\beta}{\beta}\right)
 \hskip 15pt\forall \alpha,\beta\in\R^n.
\end{equation}
This gives $\textit{(1)}\Rightarrow\textit{(2)}$. To prove $\textit{(2)}\Rightarrow\textit{(1)}$ write 
$4\sk{V_{\mathsf s}\alpha}{\beta}=\sk{V(\alpha+\beta)}{(\alpha+\beta)}-\sk{V(\alpha-\beta)}{(\alpha-\beta)}$. Estimating the right-hand side by \textit{(2)} proves \eqref{eq: square} and hence \textit{(1)}.

The equivalence $\textit{(2)}\Leftrightarrow\textit{(3)}$ follows from observing that $2\sqrt{p-1}/|p-2|=\tan\phi_p$. 
\end{proof}

\subsection{Examples. Connection with optimal results in the holomorphic functional calculus.}

\label{s: Howlin Wolf Hidden Charms}

Let us list a few cases of explicit identifications of $p$-ellipticity intervals which will be used in the continuation or else have appeared implicitly in our previous works.

The first result quickly follows from the definition \eqref{eq: kabuto}.

\begin{lem}
\label{l: adventni koledar 2018}
Let $\Omega\subset\R^{n}$ be open, $W\in L^\infty(\Omega\rightarrow\R^{n,n})$  antisymmetric and $\phi\in\R$. Define
$
A(x):=e^{i\phi}I_{\R^n}+iW(x)\,.
$
Then for any $p\in[1,\infty]$ we have
$$
\Delta_p(A)
=\cos\phi-\sqrt{(1-2/p)^2+\nor{W}^2}.
$$
Here 
$
{\displaystyle\nor{W}:=\underset{x\in\Omega}{{\rm ess}\sup}\nor{W(x)}}
$
with $\nor{W(x)}$ being the operator norm of $W(x)$ in $\cB(\R^n)$.
\end{lem}

It is worth recording the following special case of Lemma \ref{l: adventni koledar 2018}:
\begin{equation}
\label{eq: perasovic}
\Delta_p(e^{i\phi}I)=\cos\phi-|1-2/p|,
\hskip 40pt \forall\phi\in\R, \forall p\in[1,\infty].
\end{equation}

The threshold that we obtained in \cite[Lemma~20]{CD-mult} is by \eqref{eq: perasovic} equivalent to $\Delta_p(e^{i\phi}I)\geq0$. 
Let us illuminate this connection a little bit.

One of the novelties in \cite{CD-mult} was bilinear embedding with {\sl complex} time, expressed as the integration over the boundary of the sector $\bS_\phi$ in \cite[Theorem 9]{CD-mult}, that is, with real time $t$ replaced by $te^{\pm i\phi}$. Recall from Section \ref{s: heat-flow} that a part of our heat-flow argument which was also used in \cite{CD-mult} is differentiation of the flow with respect to $t$. This accounts for the appearance of factors $e^{\pm i\phi}$ attached to the second-order derivatives of the Bellman function $Q$ in \cite[Section 4]{CD-mult}. 
However, by using Theorem \ref{t: bouga} and the terminology introduced in this paper, positivity of those terms, which is a fundamental component of our heat-flow method, boils down to the $p$-ellipticity of matrices $e^{i\phi}I$. See also Remark~\ref{r: stojkovic}.

\medskip
We are able to calculate $\nor{\oW_p(A)}$ in a special case which appeared in our proof of the sharp bounded holomorphic functional calculus for nonsymmetric Ornstein--Uhlenbeck operators \cite{CD-OU}.

\begin{prop}
\label{eq: moji decki sa mnom u LJ}
Suppose that $B\in\R^{n,n}$ is such that $B_{\sf s}$ is positive definite. Then, for any $\phi\in(-\pi/2,\pi/2)$,
\begin{equation}
\label{eq: Wp in OU}
\nor{\oW_p\big(e^{i\phi}B\big)}^2
=\tan^2\phi\cdot
\frac{\Nor{B_{\sf s}^{-1/2}B_{\sf a}B_{\sf s}^{-1/2}}^2+{\widehat p}^2}{1-{\widehat p}^2}
\,.
\end{equation}
\end{prop}

\begin{proof}
The definition of $\oW_p$ in combination with \eqref{eq: the conspirators} yields
$$
\oW_p(e^{i\phi}B)
=\tan\phi\left(\frac{p-2}{2\sqrt{p-1}}\,I
+\frac p{2\sqrt{p-1}}\, 
B_{\sf s}^{-1/2}B_{\sf a}B_{\sf s}^{-1/2}\right).
$$
The fact that $B_{\sf s}^{-1/2}B_{\sf a}B_{\sf s}^{-1/2}=\oW_2\left(e^{i\pi/4}B\right)$
is antisymmetric implies \eqref{eq: Wp in OU}; see also \cite[Proof of Theorem~1.1]{CFMP2}.
\end{proof}

As a consequence we are able to determine when $\Delta_p\big(e^{i\phi}B\big)\geq0$. By Proposition~\ref{p: sahbaz} this happens precisely when $\nor{\oW_p\big(e^{i\phi}B\big)}\leq1$. Solving on $\phi\in[0,\pi/2)$ the equation $\nor{\oW_p\big(e^{i\phi}B\big)}=1$ gives the 
critical angle that featured in \cite{CD-OU}, see eq. (10) there.
\medskip

Another connection with \cite{CD-OU} is the next identity. It generalizes \cite[Proposition~21]{CD-OU}, where it was proven in the case of $\Re A=\Im A$. Indeed, this follows from applying Proposition~\ref{eq: moji decki sa mnom u LJ} with $\phi=\pi/4$.

\begin{prop}
For $A\in\C^{n,n}$ with $\Re A$ positive definite and $p\in(1,\infty)$ we have
\begin{equation}
\label{eq: smokestack lightnin'}
\sup_{\xi\in\C^n\backslash\{0\}\atop\zeta\in\C\backslash\{0\}}
\frac{\left|H_{F_p}^{i\Im A}[\zeta;\xi]\right|}{H_{F_p}^{\Re A}[\zeta;\xi]}
=
\nor{\oW_p(A)}.
\end{equation}
\end{prop}

\begin{proof}
By Lemma~\ref{l: gdje carlija vjetric mio} {\it (\ref{l: judge duvall})}, it is enough to take $\zeta=1$ in the supremum on the left. Write $A=U+iV$, as before. Denote the left-hand side of \eqref{eq: smokestack lightnin'} by $\gamma_p(A)$. By \eqref{eq: i do kad si tu}, $\gamma_p(A)$ is the smallest number for which the inequality
$$
\left|
\Im\!
\sk{
V\xi}{\cI_p\xi}
\right|
\leq\gamma_p(A)
\Re\!
\sk{
U\xi}{\cI_p\xi}
$$
is valid for all $\xi\in\C^n$. Rewrite the above inequality in terms of $u,v\in\R^n$, introduced through $\xi=U_{\mathsf s}^{-1/2}\left(u/\sqrt{p-1}+iv\right)$. Eventually we get
$$
\left|\sk{\oW_p(A)v}{u}\right|\leq\gamma_p(A)\,\frac{|v|^2+|u|^2}{2}.
$$
By polarization, this is of course equivalent to 
$$
\left|\sk{\oW_p(A)v}{u}\right|\leq\gamma_p(A)|v||u|
\hskip 40pt
\forall v,u\in\R^n,
$$
and the smallest $\gamma_p(A)$ in this inequality is by definition $\nor{\oW_p(A)}$.
\end{proof}

We leave the proof of the next result to the reader.
\begin{prop}
\label{p: ja sam}
If $A\in L^{\infty}(\Omega\rightarrow\C^{n,n})$ and $p\in[1,\infty]$ then for any pair of distinct numbers $\f,\psi\in(0,\pi/2)$ we have
$$
\Delta_p(A)\leq \left(\frac{\Delta_p(e^{i\f}A)}{\sin\f}-\frac{\Delta_p(e^{i\psi}A)}{\sin\psi}\right)/(\cot\f-\cot\psi).
$$
\end{prop}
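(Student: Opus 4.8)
The plan is to reparametrize the family of rotations $\{e^{i\theta}A\}$ by the cotangent of the angle and to exhibit, uniformly in the spatial variable $x$ and the spectral variable $\xi$, an elementary one‑variable ``slope inequality'' from which the assertion drops out after interchanging an essential infimum with a minimum.

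First I would record the pointwise description of $\Delta_p$ valid for \emph{every} $A\in L^{\infty}(\Omega\rightarrow\C^{n,n})$, not just accretive ones:
\[
\Delta_p(A)=\underset{x\in\Omega}{{\rm ess}\inf}\,\min_{|\xi|=1}\Big(\Re\sk{A(x)\xi}{\xi}-|\wh p|\,\big|\sk{A(x)\xi}{\bar\xi}\big|\Big).
\]
For accretive $A$ this is \eqref{eq: pelikan}; in general it follows directly from the definition \eqref{eq: Deltap} upon writing $2\cJ_p\xi=\xi-\wh p\,\bar\xi$ and then replacing $\xi$ by $e^{i\tau}\xi$ and minimizing over $\tau\in\R$ (which fixes $\sk{A\xi}{\xi}$ and rotates $\sk{A\xi}{\bar\xi}$). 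Since the right‑hand side of the claimed inequality is unchanged when $\f$ and $\psi$ are interchanged, I would assume $\f<\psi$, so that $\cot\f>\cot\psi$ ($\cot$ being strictly decreasing on $(0,\pi/2)$).

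Next, for $\theta\in(0,\pi/2)$ I would use $\Re\sk{e^{i\theta}A\xi}{\xi}=\cos\theta\,\Re\sk{A\xi}{\xi}-\sin\theta\,\Im\sk{A\xi}{\xi}$ and $\big|\sk{e^{i\theta}A\xi}{\bar\xi}\big|=\big|\sk{A\xi}{\bar\xi}\big|$; dividing the pointwise formula applied to $e^{i\theta}A$ by $\sin\theta>0$ and using $1/\sin\theta=\sqrt{1+\cot^2\theta}$ yields
\[
\frac{\Delta_p(e^{i\theta}A)}{\sin\theta}=\underset{x\in\Omega}{{\rm ess}\inf}\,\min_{|\xi|=1}h_{x,\xi}(\cot\theta),\qquad\text{where }\; h_{x,\xi}(u):=a\,u-b-|\wh p|\,c\,\sqrt{1+u^2},
\]
with $a=\Re\sk{A(x)\xi}{\xi}$, $b=\Im\sk{A(x)\xi}{\xi}$, $c=\big|\sk{A(x)\xi}{\bar\xi}\big|\geq0$. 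The heart of the argument is the elementary identity, for $u_1<u_2$,
\[
h_{x,\xi}(u_2)-h_{x,\xi}(u_1)-(a-|\wh p|\,c)(u_2-u_1)=|\wh p|\,c\Big[\big(u_2-\sqrt{1+u_2^2}\big)-\big(u_1-\sqrt{1+u_1^2}\big)\Big]\geq0,
\]
which holds because $|\wh p|\,c\geq0$ and $t\mapsto t-\sqrt{1+t^2}$ is nondecreasing (its derivative $1-t/\sqrt{1+t^2}$ is $\geq0$). Since $a-|\wh p|\,c\geq\Delta_p(A)$ for a.e.\ $x$ and all $\xi$ by the pointwise formula, this gives $h_{x,\xi}(u_2)-h_{x,\xi}(u_1)\geq\Delta_p(A)\,(u_2-u_1)$.

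Finally I would specialize to $u_1=\cot\psi<u_2=\cot\f$: for a.e.\ $x$ and all $\xi$,
\[
h_{x,\xi}(\cot\f)\geq h_{x,\xi}(\cot\psi)+\Delta_p(A)\,(\cot\f-\cot\psi),
\]
so taking $\min_{|\xi|=1}$ and then ${\rm ess}\inf_{x\in\Omega}$ — the additive constant passes through both, and $f\geq g$ a.e.\ forces ${\rm ess}\inf f\geq{\rm ess}\inf g$ — produces
\[
\frac{\Delta_p(e^{i\f}A)}{\sin\f}\geq\frac{\Delta_p(e^{i\psi}A)}{\sin\psi}+\Delta_p(A)\,(\cot\f-\cot\psi),
\]
and dividing by $\cot\f-\cot\psi>0$ is exactly the assertion. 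I do not anticipate a genuine obstacle here: the only idea is the change of variable $u=\cot\theta$ (equivalently, the observation that $\theta\mapsto\Delta_p(e^{i\theta}A)/\sin\theta$, viewed as a function of $\cot\theta$, is an essential infimum of the concave functions $h_{x,\xi}$, all of whose secant slopes are bounded below by $\Delta_p(A)$), after which everything is the elementary square‑root inequality above together with routine bookkeeping of infima and minima.
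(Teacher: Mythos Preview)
Your argument is correct. The paper itself offers no proof of this proposition (it reads ``We leave the proof of the next result to the reader''), so there is nothing to compare against; your change of variable $u=\cot\theta$, which turns $\Delta_p(e^{i\theta}A)/\sin\theta$ into an essential infimum of the concave functions $h_{x,\xi}(u)=au-b-|\wh p|\,c\sqrt{1+u^2}$ and then exploits the elementary secant-slope bound coming from the monotonicity of $t\mapsto t-\sqrt{1+t^2}$, is precisely the sort of computation the authors presumably had in mind and is in the spirit of the identity \eqref{eq: pelikan} that they use throughout.
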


It quickly follows from the definition \eqref{eq: kabuto} of $\Delta_p(A)$ that for $A,B\in L^{\infty}(\Omega\rightarrow\C^{n,n})$ and $1/p+1/q=1$ we have
$$
\left|\Delta_p(A)-\Delta_p(B)\right|\leq\frac{\nor{A-B}_\infty}{\min\{p,q\}}\,.
$$
In particular, $\f\mapsto\Delta_p(e^{i\f}A)$ is Lipschitz continuous on $(0,\pi/2)$. Hence, by Rademacher's theorem, this function is differentiable a.e. $(0,\pi/2)$. For $\f\in(0,\pi/2)$ at which the derivative exists, Proposition~\ref{p: ja sam} gives the estimate
$$
\frac\pd{\pd\f}\Delta_p(e^{i\f}A)\leq\frac{\Delta_p(e^{i\f}A)\cos\f-\Delta_p(A)}{\sin\f}\,.
$$

\subsection{Proof of Theorem~\ref{t: bouga}}
\label{l: pejda}
We will need the following straightforward statement which we formulate for the sake of convenience.

\begin{lem}
\label{tu tu tu po cesti}
Suppose that $a,b,c\in\R$. Then $\inf_{X>0}(a X-b+c X^{-1})>0$  if and only if 
$a,c\geq0$ and $b<2\sqrt{ac}$. In this case the above infimum equals $2\sqrt{ac}-b$.
\end{lem}

We partially follow the proof of \cite[Theorem~3]{DV-Sch}. 
When $p=2$ the Bellman function reads $Q(\zeta,\eta)=(1+\delta)|\zeta|^2+|\eta|^2$ for all $\zeta,\eta\in\C$, hence the theorem quickly follows from \eqref{eq: ella sunshine}. Thus from now on assume that $p>2$.

As in \cite{DV-Sch} write $\textsf{u}=|\zeta|$, $\textsf{v}=|\eta|$, $\textsf{ A}=|\omega_1|$, $\textsf{ B}=|\omega_2|$, 
where $v=(\zeta,\eta)\in\C^2\setminus\Upsilon$ and $\omega=(\omega_1,\omega_2)\in\C^n\times\C^n$. 
Following \eqref{eq: prazno} we consider two cases. 

\medskip
If $\textsf{u}^p> \textsf{v}^q>0$, then by \eqref{eq: prazno}, Corollary~\ref{c: cheverny} and Proposition~\ref{p: kaminszki} we have, almost everywhere on $x\in\Omega$, 
$$
\aligned
H_Q^{(A(x),B(x))}[v;\omega] 
&  =(1+(1-\wh p)\delta)
H_{F_p}^{A(x)}[\zeta;\omega_1]
+(1+\wh p\,\delta)
H_{F_q}^{B(x)}[\eta;\omega_2]\\
&\geqslant 
\frac{\Delta_p}2
\left[
p(p+2\delta) \textsf{u}^{p-2}\textsf{A}^2
+q(q+(2-q)\delta) \textsf{v}^{q-2}\textsf{B}^2
\right]\,.
\endaligned
$$
By the assumption we have $2-q>0$. So whenever $\delta>0$, we may continue as
$$
\geqslant 
\frac{\Delta_p}2
\left(
p^2 \textsf{u}^{p-2}\textsf{A}^2
+q^2 \textsf{v}^{q-2}\textsf{B}^2
\right)
\geq
\Delta_p pq\textsf{A}\textsf{B}\,.
$$
In the last step we used the inequality between the arithmetic and geometric mean and 
the assumption $\textsf{u}^p\geqslant \textsf{v}^q$.
 
\medskip
What remains is the case $\textsf{u}^p<\textsf{v}^q$. From \eqref{eq: prazno} and Corollaries \ref{c: cheverny} and \ref{c: omsk} we get, almost everywhere on $x\in\Omega$, 
\begin{equation}
\label{eq: his truth is marching on}
\aligned
H_Q^{(A(x),B(x))}[v;\omega] 
&  \geqslant
 H_{F_q}^{B(x)}[\eta;\omega_2] +\delta H_{F_2\otimes F_{2-q}}^{(A(x),B(x))}[v;\omega] 
 \\
& \geqslant 
2\delta\textsf{A}\textsf{B}
\left(
\lambda_A X
-2(2-q)\Lambda 
+\frac\Gamma4X^{-1}
\right),
\endaligned
\end{equation}
where $X=\textsf{v}^{2-q}\textsf{A}/\textsf{B}$ and 
\begin{equation*}
\label{eq: oher}
\Gamma=\frac{q^2\Delta_q(B)}\delta+(2-q)^2\Delta_{2-q}(B)\,.
\end{equation*}
We want 
$\lambda_A X-2(2-q)\Lambda +(\Gamma/4)X^{-1}\geqsim1$
uniformly in $X>0$. 
By Lemma~\ref{tu tu tu po cesti} this happens precisely when
\begin{equation}
\label{eq: prasko}
\frac{\Delta_q(B)}\delta>\left(\frac{2-q}{q}\right)^2\left(\frac{4\Lambda^2}{\lambda_A}-\Delta_{2-q}(B)\right)\,.
\end{equation}
From \eqref{eq: pelikan}, which holds also when $0<p<1$, we get the estimate
\begin{equation}
\label{angers}
\Delta_{2-q}(B)\geq\lambda-\Lambda q/(2-q),
\end{equation}
and one can eventually show that the condition \eqref{eq: prasko} is satisfied by taking 
\begin{equation}
\label{eq: schluss}
\delta=\frac{\lambda\Delta_q(B)}{10\Lambda^2}.
\end{equation}
In this case, again by Lemma~\ref{tu tu tu po cesti} and \eqref{angers}, we get, for any $X>0$,
$$
\aligned
\lambda_A X
-2(2-q)\Lambda 
+\frac\Gamma4X^{-1}
&\geq\sqrt{\lambda_A\Gamma}-2(2-q)\Lambda \\
&\geq\sqrt{10q^2\Lambda^2+(2-q)^2\lambda\left(\lambda-\Lambda q/(2-q)\right)}-2(2-q)\Lambda\\
&\geq\sqrt{10q^2\Lambda^2-q(2-q)\Lambda^2}-2(2-q)\Lambda\\
&\geq\Lambda\,.
\endaligned
$$
Remember from \eqref{eq: his truth is marching on} that in order to get an estimate of $H_Q^{(A,B)}[v;\omega]$ we need to multiply by $2\delta\textsf{A}\textsf{B}$. Estimate \eqref{eq: schnee} now follows. 

The theorem is proven with $\delta$ as in \eqref{eq: schluss}.
\qed

\section{Proof of the bilinear embedding (Theorem~\ref{t: bilincomplex})}
\label{s: sonno}

Take $p>1$, $n\in\N$ and $A,B\in\cA(\R^n)$ such that $\Delta_p(A,B)>0$.  
It is enough to consider the case $p\geq2$.
We will for the moment also {\it assume that $A,B\in C^1_b(\R^n)$}. 
Once the proof for smooth $A,B$ is over, we will apply the regularization argument from the Appendix to pass to the case of arbitrary (nonsmooth) $A,B$.

Let $\delta\in(0,1)$ be as in Theorem~\ref{t: bouga} (here we use the assumption that  $A$ and $B$ are $p$-elliptic) and let $Q= Q_{p,\delta}$ be the Bellman function defined in \eqref{eq: rupkina}. Take $f,g\in C_c^\infty(\R^n)$. Suppose that $\psi\in C^\infty_c(\R^n)$ is radial, $\psi\equiv 1$ in the unit ball, $\psi\equiv 0$ outside the ball of radius $2$, and $0< \psi< 1$ elsewhere. For $R>0$ define the dilates $\psi_R(x) := \psi(x/R)$. Let $(\f_\kappa)_{\kappa>0}$ be a nonnegative, smooth and compactly supported approximation of the identity on $\C^2$. 
Abbreviate $ Q*\f_\kappa= Q_\kappa$ and $h_t=(P_{t}^{A}f,P_{t}^{B}g)$. With these choices made and fixed, define for $t>0$ the quantity 
$\cE_{R,\kappa}$ by
\begin{equation*}
\label{eq: telavshi tushma gagpera}
\cE_{R,\kappa}(t)=\int_{\R^n}\psi_R\cdot Q_\kappa(h_t)\,.
\end{equation*}

As commented before, this flow is regular. Fix $T>0$. As indicated in Section~\ref{s: heat-flow}, we want to estimate the integral
\begin{equation}
\label{eq: Messiah}
-\int_0^T\cE_{R,\kappa}'(t)\,dt
\end{equation}
from above and below. 

\subsubsection*{Upper estimate of the integral \eqref{eq: Messiah}}
We have, by Corollary~\ref{c: delighted},
$$
-\int_0^T\cE_{R,\kappa}'(t)\,dt
\leqslant \cE_{R,\kappa}(0)
=\int_{\R^n}\psi_R\cdot Q_\kappa(f,g)
\leq(1+\delta)\int_{\R^n}\psi_R\left[(|f|+\kappa)^p+(|g|+\kappa)^q\right].
$$
By the Lebesgue dominated convergence theorem we may send first $\kappa\rightarrow0$ and then $R\rightarrow\infty$ and obtain
\begin{equation}
\label{eq: novica}
\limsup_{R\rightarrow\infty}\limsup_{\kappa\rightarrow0}\left(-\int_{0}^T\cE_{R,\kappa}'(t)\,dt\right)
\leqslant 
(1+\delta)(\nor{f}_p^p+\nor{g}_q^q)\,.
\end{equation}

\subsubsection*{Lower estimate of the integral \eqref{eq: Messiah}}

For $R>0$ define $\omega_R=\mn{x\in\R^n}{R\leqslant|x|\leq 2R}$, so that ${\rm supp}\ \nabla\psi_R\subset\omega_R$. 
Then, by Proposition~\ref{p: razina vezut} and Corollary \ref{c: Beethoven9},
$$
\aligned
-&\int_0^T\cE_{R,\kappa}'(t)\,dt \geqslant 
C(\Delta_p,\lambda/\Lambda)\int_0^T\int_{\R^n}\psi_R|\nabla P_{t}^{A}f||\nabla P_{t}^{B}g|\\
&+2\Re\int_0^T\int_{\omega_R}\Big( \left[(\pd_{\bar\zeta} Q_\kappa)\circ h\right]\cdot\sk{\nabla\psi_R}{A\nabla P_{t}^{A} f}_{\C^n}+\left[(\pd_{\bar\eta} Q_\kappa)\circ h\right]\cdot\sk{\nabla\psi_R}{B\nabla P_{t}^{B} g}_{\C^n}\Big).
\endaligned 
$$
Here $C(\Delta_p,\lambda/\Lambda)$ is the constant from \eqref{eq: wut}. 
We would like to study this inequality as $\kappa\rightarrow0$ and $R\rightarrow\infty$. Our argument is similar to the one from \cite[pp. 2825--2827]{DV-Kato} where real matrices were treated and thus the semigroup $(P_t^Af)_{t>0}$ was  $L^\infty$-contractive. In the complex case we slightly modify the argument: instead we use that the coefficients of $A$ are smooth and thus  $\left(P_t^A\right)_{t\in(0,T)}$ is uniformly bounded on $L^\infty$, as specified in \eqref{eq: Smarjeske Toplice} below.

\begin{lem}
\label{l: Mahalia Go Tell}
Let $A,B\in\cA(\R^n)\cap C_b^1(\R^n;\C^{n,n})$, $\alpha\geq0$ and $f,g\in C_c^\infty(\R^n)$. Then for all $R,T>0$ the map 
$(x,t)\mapsto \left|\nabla\psi_R(x)\right| \left|P_t^Af(x)\right|^\alpha\left|\nabla P_t^Bg(x)\right|$ belongs to $L^1\left(\R^n\times(0,T)\right)$ and
$$
\lim_{R\rightarrow\infty}
  \int_0^T
     \int_{\R^n}
        |\nabla\psi_R(x)| \left|P_t^Af(x)\right|^\alpha\left|\nabla P_t^Bg(x)\right|
     \,dx
   \,dt
=0.
$$
\end{lem}
\begin{proof}
By \cite[Theorem 4.8]{Auscher}, there exist $C,M>0$ such that 
\begin{equation}
\label{eq: Smarjeske Toplice}
\nor{P_t^Af}_\infty\leq C(1+t)^M\nor{f}_\infty, 
\hskip 20pt
\forall t>0.
\end{equation}
This implies, together with the identity $\nor{\nabla\psi_R}_\infty= \nor{\nabla\psi}_\infty/R$, the estimate
$$
\aligned
\int_0^T\int_{\R^n} |\nabla\psi_R| \left|P_t^Af\right|^\alpha\left|\nabla P_t^Bg\right| 
& \leqsim 
\frac1R\int_0^T\int_{\omega_R} 
\left|\nabla P_t^Bg\right|\\
& \leqsim 
R^{n/2-1}\int_0^T\Nor{\nabla P_t^Bg}_{L^2(\omega_R)}\,dt,
\endaligned
$$
with the implied constant depending on $\lambda_A,\Lambda_A,n,f,\alpha,T$.
If $R >0$ is large enough so that the open ball $B(0,R)$ in $\R^n$ contains the support of $g$, the $L^2$ off-diagonal estimates of Davies-Gaffney type \cite[Proposition~2.1]{A} imply
$$
\Nor{\nabla P_t^Bg}_{L^2(\omega_R)}\leqsim Ct^{-1/2}e^{-cR^2/t}\nor{g}_{L^2}
$$
for any $t>0$ and some $C,c>0$. From here we can quickly finish the proof.
\end{proof}

Since $ Q$ is of class $C^1$, we have 
$\pd_{\bar\zeta} Q_\kappa\rightarrow \pd_{\bar\zeta} Q$ and $\pd_{\bar\eta} Q_\kappa\rightarrow \pd_{\bar\eta} Q$ pointwise on $\R^n$, as $\kappa\rightarrow 0$. Therefore, by Corollary \ref{c: delighted} \eqref{eq: 442'}, the first part of Lemma \ref{l: Mahalia Go Tell} and the dominated convergence theorem, 
\begin{eqnarray*}
\label{eq: slava}
&\hskip -80pt
{\displaystyle\liminf_{\kappa\rightarrow 0}\left(-\int_0^T\cE_{R,\kappa}'(t)\,dt\right)
 \geqslant 
C(\Delta_p,\lambda/\Lambda)\int_0^T\int_{\R^n}\psi_R|\nabla P_{t}^{A}f||\nabla P_{t}^{B}g|}\\
&
{\displaystyle+2\Re\int_0^T\int_{\omega_R}\Big( (\pd_{\bar\zeta} Q)(h)\cdot\sk{\nabla\psi_R}{A\nabla P_{t}^{A} f}_{\C^n}
+(\pd_{\bar\eta} Q)(h)\cdot\sk{\nabla\psi_R}{B\nabla P_{t}^{B} g}_{\C^n}\Big).}
\nonumber
\end{eqnarray*}
Hence, by Corollary \ref{c: delighted} \eqref{eq: 442'}, the second part of Lemma \ref{l: Mahalia Go Tell} and Fatou's lemma, 
\begin{equation}
\label{eq: negovanovic}
\aligned
\liminf_{R\rightarrow \infty}\liminf_{\kappa\rightarrow 0}
\left(-\int_0^T\cE_{R,\kappa}'(t)\,dt\right)
& \geqslant 
C(\Delta_p,\lambda/\Lambda)\int_0^T\int_{\R^n}|\nabla P_{t}^{A}f||\nabla P_{t}^{B}g|.
\endaligned 
\end{equation}

\subsubsection*{Summary}
The combination of \eqref{eq: novica} and \eqref{eq: negovanovic} immediately gives rise to
$$
\int_0^T\int_{\R^n}|\nabla P_{t}^{A}f||\nabla P_{t}^{B}g|
\leq \frac{1+\delta}{C(\Delta_p,\lambda/\Lambda)}\,(\nor{f}_p^p+\nor{g}_q^q)\,.
$$
As usual, replace $f,g$ by $\tau f,\tau^{-1}g$ and minimize over $\tau>0$, after which send $T\rightarrow\infty$ and use the monotone convergence theorem. This gives the bilinear embedding \eqref{eq: bilincomplex} for smooth $A,B$. 

Finally, in order to treat the case of arbitrary $A$ and $B$, consider their mollifications $A_\e$ and $B_\e$ as in Section~\ref{s: BWV 1056 7. Allegro moderato}. Fix $t>0$. By Lemmas~\ref{l: stevens} and \ref{c: appregop}, $\nabla P^{A_\e}_{t}f$ converges to $\nabla P^{A}_{t}f$ 
in $L^{2}(\R^{n};\C^{n})$ as $\e\rightarrow 0$, and the same for $P_t^{B_{\e}}$. Therefore, 
$$
\int_{\R^{n}}\mod{\nabla P^{A}_{t}f}\mod{\nabla P^{B}_{t}g}
=
\lim_{\e\rightarrow 0}\int_{\R^{n}}\mod{\nabla P^{A_\e}_{t}f}\mod{\nabla P^{B_\e}_{t}g}.
$$
The conclusion now follows by integrating over $t$, applying the Fatou lemma, using the part proven so far (that is, the bilinear embedding for the smooth case) and Lemma~\ref{l: stevens}.
\qed

\subsection{Sharpness}
\label{s: zavratnica}

Proposition~\ref{p: reki begalci} follows from establishing the sharp angle of contractivity of the heat semigroup on $L^{p}$. 
Actually, the $L^{p}$ norm of the semigroup generated by the classical euclidean Laplacian on $\R^n$ at any complex time $z$ with $\Re z>0$ can be calculated explicitly:

\begin{thm}
\label{t: tgv bordeaux-cdg}
Suppose that $\phi\in(-\pi/2,\pi/2)$ and $p\in[1,\infty]$. Then there exists a constant $C=C(\phi,p)\geq1$ such that for all $n\in\N$ and $t>0$ we have
$$
\Nor{e^{-te^{i\phi}(-\Delta_n)}}_{\cB(L^{p}(\R^n))}= C^n\,.
$$
If
$|\phi|\leq\phi_p$ 
then $C=1$.\\
If $|\phi|>\phi_p$ then $C>1$. The constant $C$ can in this case also be given explicitly:
\begin{equation}
\label{eq: Ivan IV}
C^{4}=
\frac{1-\gamma}{1+\gamma}
\left(\frac{\sigma+\gamma}{\sigma-\gamma}\right)^\sigma\,,
\hskip 40pt\text{if }\
p\in(1,\infty),
\end{equation}
where
$$
\sigma=\cos\phi_p=|1-2/p|
\hskip 40pt
\text{and}
\hskip 40pt
\gamma=
\frac{\sqrt{\sigma^2-\cos^2\phi}}{|\sin\phi|}\,,
$$
and 
\begin{equation*}
\label{massena}
C(\phi,1)=C(\phi,\infty)=\lim_{p^*\rightarrow\infty}C(\phi,p)=\frac1{\sqrt{\cos\phi}}\,.
\end{equation*}
\end{thm}

An analogous theorem by Epperson \cite{Epp} concerns the $L^{p}$ norm of the semigroup $\exp(-z\Delta_{OU})_{z\in\C_+}$ on $L^{p}(\R^n,\mu)$, where $1<p<\infty$, $\Delta_{OU}$ is the $n$-dimensional positive {\it Ornstein-Uhlenbeck operator} and $\mu$ the standard Gau\ss ian measure on $\R^n$. Epperson's findings are dichotomous: in the region $\mn{z\in\overline{\C_+}}{|\sin\Im z|\leq (\tan\phi_p)\sinh\Re z}$ the semigroup is contractive on $L^{p}(\R^n,\mu)$, while elsewhere it is not even bounded. His result immediately implies that $\overline\bS_{\phi_p}$ is the largest {\sl sector} in which $\exp(-z\Delta_{OU})_{z\in\C_+}$ is bounded (or contractive) on $L^{p}(\R^n,\mu)$, as in the non-Gau\ss ian case.

The evaluation of the $L^{p}$-norms of $\exp(z\Delta_n)$, for all $z\in\C_+$, 
is due to Weissler \cite[Theorem~3 (b)]{We} in the case $p^*\geq3$. 
His formulation differs from \eqref{eq: Ivan IV}, though of course they are equivalent. 
The result for $p^*\in(2,3)$ was confirmed a decade later, by combining 
\cite{Epp} with the remark made in \cite{We} just after Theorem~3.

Prior to learning about the paper by Weissler \cite{We}, we proved Theorem~\ref{t: tgv bordeaux-cdg} 
as follows. First we used the so-called Beckner's tensorization trick \cite[Lemma~2]{Be} to reduce the calculation of the norm to the case $n=1$. Then we applied a result of Epperson \cite[Theorem~2.4]{Epp}, see also Lieb \cite[Theorem~4.1]{L}, according to which in order to calculate the norm in $L^{p}(\R)$ of $\exp(z\,d^2/dx^2)$ with $\Re z>0$ it suffices to test the operator on {\it centered Gau\ss ian functions}.

\begin{rem}
The expression \eqref{eq: Ivan IV} 
implicitly also appeared in \cite{DPV}, in a completely different context. Indeed, the calculation on p. 508 there 
(with $t=1/\sigma$ and $w=\gamma/\sigma$) directly confirms that $C>1$ when $0<\gamma<\sigma<1$. 
\end{rem}

 \begin{rem}
 We see that, for $p\ne2$, $\lim_{|\phi|\rightarrow\pi/2} C(\phi,p) =\infty$.
 This reflects the fact that $e^{is\Delta_n}$ is unbounded on $L^{p}(\R^n)$ for any $s\in\R$ and 
 $p\in[1,\infty]\backslash\{2\}$, see \cite{H,AEMH}.
 \end{rem}

\begin{proof}[Proof of Proposition~\ref{p: reki begalci}.]
\label{begalci 2015}
Let $p,\vartheta,A_n$ be as in the formulation of the proposition. Then the estimate \eqref{eq: besko}, applied with $A=A_n$ and $B=A_n^*$, and Theorem~\ref{t: tgv bordeaux-cdg} imply that
\[
N_p(A_n,A_n^*)
\geq
\frac1{2}\Nor{e^{-L_{A_n}}}_{\cB(L^{p}(\R^n))}
=
\frac1{2}\, C(
\vartheta,p)^n
\rightarrow\infty
\hskip 25pt
\text{ as }n\rightarrow\infty.
\qedhere
\]
\end{proof}

\section{Proof of the contractivity result (Theorem~\ref{t: Aufnahmebetriebsartenwaehler})}
\label{masamune}

In this section we give a proof of Theorem~\ref{t: Aufnahmebetriebsartenwaehler}. It involves no heat flow and no Bellman function. It does, however, prominently feature {\it power} functions and their convexity. Furthermore, we elucidate the connection between this paper and the ones by Nittka \cite{N} and Cialdea and Maz'ya \cite{CM}. The reader may also consult the latter authors' subsequent monograph \cite{CM2014} and a paper by Cialdea \cite{C}.

We use a characterization of the contractivity on $L^{p}(\Omega)$ of the semigroup $(P^{A}_{t})_{t>0}$ which is due to Nittka \cite{N} 
and relies on earlier results by Ouhabaz \cite[Theorem~2.2]{O}; see the discussion and references in \cite{N}. Unlike the classical Lumer--Phillips theorem which characterizes it in terms of $L_{A}$ itself, the result below does it in terms of the quadratic form associated with $L_{A}$. Its main convenience for our purpose is that the domains of $L_{A}$ and especially their realizations on $L^{p}$ are not known, while the domain of the quadratic form in \eqref{eq: miruna} is by definition the Sobolev space $H_0^1(\Omega)$. See also \cite[p. 43]{O}.

\subsection{$L^{p}$-dissipativity of forms}
The notion of {\it $L^{p}$-dissipativity} of sesquilinear forms was introduced by Cialdea and Maz'ya in \cite[Definition~1]{CM} for the case of forms defined on $C_c^1(\Omega)$ and associated with complex matrices. Motivated by the desire to merge \cite{CM} and Nittka \cite[Theorem~4.1]{N}, we extend that notion as follows.

\begin{defin}
\label{linna}
Let $X$ be a measure space, $\gotb$ a sesquilinear form defined on the domain $\cD(\gotb)\subset L^2(X)$ and $1<p<\infty$. Denote 
$\cD_p(\gotb):=\mn{u\in\cD(\gotb)}{|u|^{p-2}u\in\cD(\gotb)}$. We say that $\gotb$ is {\it $L^{p}$-dissipative} if 
$$
\Re\gotb\left(u,|u|^{p-2}u\right)\geq0
\hskip40pt
\forall\,u\in\cD_p(\gotb).
$$
\end{defin}

Recall \cite[Definition 1.5]{O} that the {\it adjoint form} of $\gotb$ is defined by $\cD(\gotb^*):=\cD(\gotb)$ and $\gotb^*(u,v):=\overline{\gotb(v,u)}$. The following is a straightforward characterization of $L^p$-dissipativity. 

\begin{prop}
\label{p: dissipativity dual}
Let $X,\gotb,p$ be as in Definition \ref{linna} and $q$ given by $1/p+1/q=1$. The following statements are equivalent:
\begin{itemize}
\item
$\gotb$ is $L^{p}$-dissipative;
\item
$\gotb^*$ is $L^{q}$-dissipative;
\item
$\Re\gotb\left(|v|^{q-2}v,v\right)\geq0$ for all $v\in\cD_q(\gotb)$.
\end{itemize}
\end{prop}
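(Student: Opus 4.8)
The plan is to establish the cyclic chain of equivalences
$$
\gotb \text{ is } L^p\text{-dissipative} \iff \gotb^* \text{ is } L^q\text{-dissipative} \iff \Re\gotb\bigl(|v|^{q-2}v,v\bigr)\geq0 \ \forall v\in\cD_q(\gotb).
$$
First I would record the trivial bookkeeping fact that the substitution $u\mapsto|v|^{q-2}v$ is an involution between $\cD_p(\gotb)$ and $\cD_q(\gotb)$: if $v\in\cD_q(\gotb)$, set $u:=|v|^{q-2}v$, so that $|u| = |v|^{q-1}$ and hence $|u|^{p-2}u = |v|^{(q-1)(p-2)}|v|^{q-2}v = |v|^{(q-2)+( q-1)(p-2)}v$. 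Using $(p-1)(q-1)=1$, the exponent simplifies: $(q-2)+(q-1)(p-2) = (q-1)p - q = (q-1)p - q$; since $(q-1)p = p + (p-1)(q-1)\cdot\frac{p}{p-1}$... more cleanly, $(q-1)p - q = pq - p - q = pq(1 - 1/q - 1/p) = 0$, so $|u|^{p-2}u = v$. Thus $u\in\cD(\gotb)$ with $|u|^{p-2}u = v\in\cD(\gotb)$, i.e. $u\in\cD_p(\gotb)$, and the map is inverse to $v\mapsto|v|^{q-2}v$ on the other side. This bijection is the only nontrivial ingredient; everything else is formal.

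With the bijection in hand, the equivalence of the first and third statements is immediate: for $v\in\cD_q(\gotb)$ and the corresponding $u=|v|^{q-2}v\in\cD_p(\gotb)$ we have $|u|^{p-2}u=v$, so $\gotb\bigl(u,|u|^{p-2}u\bigr) = \gotb\bigl(|v|^{q-2}v, v\bigr)$, and as $v$ ranges over $\cD_q(\gotb)$ the pair $(u,|u|^{p-2}u)$ ranges over all admissible pairs for the $L^p$-dissipativity condition. Taking real parts gives the equivalence. For the equivalence with the middle statement, I would unwind the definition of the adjoint form: $\gotb^*$ is $L^q$-dissipative means $\Re\gotb^*\bigl(v,|v|^{q-2}v\bigr)\geq0$ for all $v\in\cD_q(\gotb^*)=\cD_q(\gotb)$, and by definition $\gotb^*\bigl(v,|v|^{q-2}v\bigr) = \overline{\gotb\bigl(|v|^{q-2}v, v\bigr)}$, whose real part coincides with $\Re\gotb\bigl(|v|^{q-2}v,v\bigr)$. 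So the middle and third statements are literally the same inequality, and we are done.

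I do not expect any genuine obstacle here; the proposition is essentially a definitional unpacking. The only point demanding a line of care is the exponent arithmetic showing $v\mapsto|v|^{q-2}v$ and $u\mapsto|u|^{p-2}u$ are mutually inverse and that they preserve membership in $\cD_p(\gotb)$ respectively $\cD_q(\gotb)$ — this uses $\frac1p+\frac1q=1$ in the form $(p-1)(q-1)=1$. I would state this cleanly and then simply chain the three reformulations. The proof should occupy only a few lines.

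\begin{proof}
Throughout, $u\in L^2(X)$ and $v\in L^2(X)$, and we use $\tfrac1p+\tfrac1q=1$, equivalently $(p-1)(q-1)=1$. First note that for $v\in\cD_q(\gotb)$, putting $u=|v|^{q-2}v$ we have $|u|=|v|^{q-1}$, hence
$$
|u|^{p-2}u=|v|^{(q-1)(p-2)}\,|v|^{q-2}v=|v|^{(q-1)(p-2)+(q-2)}\,v=|v|^{pq-p-q}\,v=v,
$$
since $pq-p-q=pq\bigl(1-\tfrac1p-\tfrac1q\bigr)=0$. Thus $u\in\cD(\gotb)$ and $|u|^{p-2}u=v\in\cD(\gotb)$, so $u\in\cD_p(\gotb)$; moreover the map $v\mapsto|v|^{q-2}v$ is, by symmetry in $p$ and $q$, a bijection $\cD_q(\gotb)\to\cD_p(\gotb)$ with inverse $u\mapsto|u|^{p-2}u$.

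Now suppose $\gotb$ is $L^p$-dissipative and let $v\in\cD_q(\gotb)$. With $u=|v|^{q-2}v\in\cD_p(\gotb)$ and $|u|^{p-2}u=v$ we get
$$
\Re\gotb\bigl(|v|^{q-2}v,\,v\bigr)=\Re\gotb\bigl(u,\,|u|^{p-2}u\bigr)\geq0,
$$
which is the third statement. Conversely, if the third statement holds and $u\in\cD_p(\gotb)$, write $v=|u|^{p-2}u\in\cD_q(\gotb)$; then $|v|^{q-2}v=u$ and
$$
\Re\gotb\bigl(u,\,|u|^{p-2}u\bigr)=\Re\gotb\bigl(|v|^{q-2}v,\,v\bigr)\geq0,
$$
so $\gotb$ is $L^p$-dissipative. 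Hence the first and third statements are equivalent.

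Finally, by the definition of the adjoint form, $\gotb^*$ is $L^q$-dissipative precisely when $\Re\gotb^*\bigl(v,\,|v|^{q-2}v\bigr)\geq0$ for all $v\in\cD_q(\gotb^*)=\cD_q(\gotb)$; and $\gotb^*\bigl(v,\,|v|^{q-2}v\bigr)=\overline{\gotb\bigl(|v|^{q-2}v,\,v\bigr)}$, whose real part equals $\Re\gotb\bigl(|v|^{q-2}v,\,v\bigr)$. Thus the second statement is identical to the third, and all three statements are equivalent.
\end{proof}
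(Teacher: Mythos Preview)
Your proof is correct. The paper does not actually supply a proof of this proposition---it simply labels the characterization as ``straightforward'' and moves on---so you have filled in precisely the routine verification the authors had in mind: the bijection $\cD_q(\gotb)\leftrightarrow\cD_p(\gotb)$ via $v\mapsto|v|^{q-2}v$ together with the conjugate-symmetry of $\gotb^*$.
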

From Proposition~\ref{p: dissipativity dual} it is obvious that Definition \ref{linna} indeed extends \cite[Definition~1]{CM}.

\begin{thm}[Nittka]
\label{t: dzigerica}
Suppose that $\Omega\subset\R^n$ is open, $A\in\cA(\Omega)$ and $p\in(1,2)$. Then $(P^{A}_{t})_{t>0}$ extends to a contractive operator semigroup on $L^{p}(\Omega)$ if and only if the form \eqref{eq: miruna} is $L^{p}$-dissipative.
\end{thm}

\begin{proof}
Apply the equivalence (i)$\Leftrightarrow$(iii) from \cite[Theorem~4.1]{N} to the form \eqref{eq: miruna}. One only needs to make sure that the orthogonal projection $L^2(\Omega)\rightarrow\mn{u\in L^2\cap L^{p}}{\nor{u}_p\leq1}$ preserves $H_0^1(\Omega)$. 

By taking $A\equiv I_n$, the form \eqref{eq: miruna} gives rise to the ordinary Laplacian subject to the Dirichlet boundary conditions on $\Omega$. The associated semigroup is contractive on $L^{p}(\Omega)$ for all $p\in[2,\infty]$, see \cite[Theorem~4.7]{O}. Thus, by applying \cite[Theorem~4.1]{N}, we conclude that the orthogonal projection of $L^2(\Omega)$ onto the set $\mn{u\in L^2\cap L^{p}}{\nor{u}_p\leq1}$ preserves $H_0^1(\Omega)$. For $1<p<2$ use duality.
\end{proof}

The $L^p$-dissipativity of the form \eqref{eq: miruna} studied, as noted above, in \cite{CM} and \cite{N} is closely related to another integral condition which recently appeared in ter \label{sensordust} Elst et al. \cite[(1.13) and Theorem 1.7]{tELSV}.
Their condition on $A=U+iV$ requires that, for all $v\in H_0^1(\Omega)$,
\begin{equation}
\label{eq: liveviewfocus}
\int_\Omega
\left(
\Re\sk{A\nabla v}{\nabla v}
-\widehat{p}^2\sk{U\nabla|v|}{\nabla|v|}
-2|\widehat{p}|\,|\!\sk{V_{\mathsf s}\nabla|v|}{\Im(\text{sign}\,{\bar v}\nabla v)}\!|
\right)
\geq0.
\end{equation}
If $p>2$ and in the third term of \eqref{eq: liveviewfocus} one removes the absolute value signs except on $|v|$, then for $u=|v|^{2/p-1}v$ one gets precisely $\Re\text{\got a}\left(u,|u|^{p-2}u\right)$, where $\text{\got a}$ is as in \eqref{eq: miruna}.

\begin{prop}
\label{p: that's the way}
Let $\text{\got a}$ be as in \eqref{eq: miruna} and $p>1$. If $\text{\got a}$ is $L^{p}$-dissipative then $\Delta_p(A_{\mathsf s})\geq0$.
\end{prop}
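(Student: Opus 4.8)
We may assume $p>2$; for $p=2$ the statement is trivial since $\Delta_2(A_{\sf s})=\lambda_{A_{\sf s}}>0$. Write $A=U+iV$ with $U,V$ real. By Proposition~\ref{p: Deltap(As)} it suffices to prove that for a.e. $x\in\Omega$ and every $\alpha\in\R^n$,
\begin{equation*}
|p-2|\,|\langle V(x)\alpha,\alpha\rangle|\le 2\sqrt{p-1}\,\langle U(x)\alpha,\alpha\rangle .
\end{equation*}
The plan is to argue by contradiction. If this fails then, testing a countable dense set of directions, there is a unit vector $\alpha$ for which the measurable function $x\mapsto 2\sqrt{p-1}\langle U(x)\alpha,\alpha\rangle-|p-2|\,|\langle V(x)\alpha,\alpha\rangle|$ is negative on a set of positive measure; picking a point $x_0$ of that set which is also a Lebesgue point of $A$, and rotating coordinates, we may assume $\alpha=e_1$, so that $|p-2|\,|V_{11}(x_0)|>2\sqrt{p-1}\,U_{11}(x_0)$.

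Next I would feed $L^p$-dissipativity of $\gota$ test functions $u=\rho\,e^{i\theta}$ with $\rho\in C_c^\infty(\Omega)$ real, $\rho\ge0$, and $\theta$ a real \emph{quadratic} phase, concentrating near $x_0$. Since $p\ge2$ one has $|u|^{p-2}u=\rho^{p-1}e^{i\theta}\in H_0^1(\Omega)$, so $u\in\cD_p(\gota)$, and a direct computation with weak derivatives gives
\begin{equation*}
\Re\gota(u,|u|^{p-2}u)=(p-1)\!\int_\Omega\!\rho^{p-2}\langle U\nabla\rho,\nabla\rho\rangle+\int_\Omega\!\rho^{p}\langle U\nabla\theta,\nabla\theta\rangle-2\sqrt{p-1}\!\int_\Omega\!\rho^{p-1}\langle\oV_p(V)\nabla\theta,\nabla\rho\rangle ,
\end{equation*}
with $\oV_p$ as in \eqref{eq: oV_p}. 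The decisive point is the last (``cross'') term: a \emph{linear} phase kills it, because $\rho^{p-1}\nabla\rho=\tfrac1p\nabla(\rho^{p})$ has vanishing integral; hence one takes $\theta(x)=\tfrac12\langle B(x-x_0),x-x_0\rangle$ with $B=B^{T}$. Splitting $\oV_p(V(x))=\oV_p(V(x_0))+\big(\oV_p(V(x))-\oV_p(V(x_0))\big)$, integrating by parts the constant piece (which uses only $B=B^{T}$, no derivatives of $A$), and controlling the remainder by the Lebesgue point property (it is $o(\varepsilon^{\,n-2})$ after the scaling below), the cross term reduces to $\tfrac{p-2}{p}\operatorname{tr}\!\big(V_{\sf s}(x_0)B\big)\int_\Omega\rho^{p}$ plus a negligible error; note that only $V_{\sf s}$ survives, since $\operatorname{tr}(V_{\sf a}(x_0)B)=0$ for symmetric $B$.

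Now take $\rho(x)=\eta\big((x-x_0)/\varepsilon\big)$ with $\eta\in C_c^\infty(\R^n)$, $\eta\ge 0$, and $B=\varepsilon^{-2}B_0$, $B_0=-\operatorname{sign}\!\big(V_{11}(x_0)\big)\,\kappa\, e_1e_1^{T}$, $\kappa>0$ — the scaling that makes the three terms comparable. Dividing by $\varepsilon^{\,n-2}$ and letting $\varepsilon\to0$ (using the Lebesgue point to replace $U(x),V(x)$ by $U(x_0),V(x_0)$), $\Re\gota(u,|u|^{p-2}u)$ converges to
\begin{equation*}
(p-1)\!\int\!\eta^{p-2}\langle U(x_0)\nabla\eta,\nabla\eta\rangle+\kappa^2 U_{11}(x_0)\!\int\!\eta^{p}y_1^2-\frac{(p-2)\kappa}{p}\,|V_{11}(x_0)|\!\int\!\eta^{p},
\end{equation*}
which $L^p$-dissipativity forces to be $\ge0$ for all admissible $\eta$ and all $\kappa>0$. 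Taking $\eta$ of product form — concentrated in $y_1$ and spread out in the remaining variables — and applying, after the substitution $w=\eta_1^{p/2}$ in $y_1$, the one-dimensional uncertainty inequality $\int (w')^2\int t^2w^2\ge\tfrac14\big(\int w^2\big)^2$, one sees that $\inf_\eta\big(\int\eta^p y_1^2\big)\big(\int\eta^{p-2}\langle U(x_0)\nabla\eta,\nabla\eta\rangle\big)/\big(\int\eta^p\big)^2\le U_{11}(x_0)/p^2$; minimizing the displayed quadratic in $\kappa>0$ then yields a negative value precisely when $(p-2)^2|V_{11}(x_0)|^2>4(p-1)U_{11}(x_0)^2$, i.e. under the assumption made for contradiction. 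So $\Re\gota(u,|u|^{p-2}u)<0$ for a suitable $u\in\cD_p(\gota)$, contradicting $L^p$-dissipativity, and the proposition follows.

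The main obstacle is the cross term: it survives only for the nonlinear (quadratic) phase, and since $A$ is merely bounded measurable one cannot integrate by parts against its coefficients, so one must isolate the constant value $A(x_0)$ and tame the commutator through Lebesgue differentiation; a secondary technical point is that reaching the \emph{sharp} threshold — rather than a non-optimal one — forces the optimal constant in the uncertainty inequality.
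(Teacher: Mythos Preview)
Your argument is essentially correct, but it takes a substantially different and more elaborate route than the paper's own proof.

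The paper's proof is very short: it simply invokes Cialdea--Maz'ya \cite[(2.18)]{CM}, which (after the substitution $v=|u|^{(p-2)/p}u$ and writing $v$ in polar form) shows that $L^{p}$-dissipativity of $\gota$ implies
\[
\int_\Omega\sk{\cB_\mu\nabla r}{\nabla r}\geq 0
\qquad\text{for all real }r\in C_c^1(\Omega)\text{ and all }\mu\in\R,
\]
where $\cB_\mu=(1+\mu^2)U+2\mu\oV_q(A)$. A standard localisation (bump functions approximating planes) then gives $\sk{\cB_\mu(x)\alpha}{\alpha}\geq 0$ a.e.\ for every $\alpha$ and $\mu$, and minimising over $\mu$ yields exactly the pointwise inequality in Proposition~\ref{p: Deltap(As)}. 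In effect, the paper uses a \emph{logarithmic} phase ($\theta$ proportional to $\log\rho$), which makes the three integrals in your displayed identity collapse to a single quadratic form in $\nabla r$; the sharp constant then drops out algebraically from the discriminant in $\mu$, with no need for the uncertainty principle.

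Your approach instead uses a \emph{quadratic} phase together with a two-scale limit ($\varepsilon\to 0$ to freeze coefficients at a Lebesgue point, then $R\to\infty$ to decouple the $y_1$-variable), and finally the sharp one-dimensional uncertainty inequality to recover the exact threshold. This works, and your handling of the cross term (integration by parts against the constant $V(x_0)$, with the variable remainder controlled by Lebesgue differentiation) is the right idea. What it buys is a fully self-contained argument that does not cite \cite{CM} and that exhibits an explicit violating test function; what it costs is considerably more machinery. Two places that would deserve a line of care in a polished write-up: (i) the near-saturation of the uncertainty inequality requires $\eta_1^{p/2}$ to approximate a Gaussian while keeping $\eta_1\in C_c^\infty$, $\eta_1\geq 0$ --- this is routine but should be said; (ii) the product-form limit $R\to\infty$ also generates cross terms $U_{1j}\partial_1\eta\,\partial_j\eta$ of order $R^{-1}$, which you implicitly discard --- again harmless, but worth noting.
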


\begin{proof}
It is enough to assume that $p\geq2$. The case $1<p<2$ can then be obtained by duality, see Proposition \ref{p: dissipativity dual}, Corollary \ref{c: garrison} {\it \ref{eq: haeri}.)}  and Proposition \ref{p: kaminszki}.

So take $p\geq2$. The proposition follows by examining Cialdea and Maz'ya \cite{CM}.
If $\text{\got a}$ is $L^{p}$-dissipative then, see \cite[(2.18)]{CM}, 
$$
\int_\Omega\sk{\cB_\mu\nabla r}{\nabla r}_{\R^n}\geq0
$$
for all real $r\in C_c^1(\Omega)$ and all $\mu\in\R$, where, writing $A=U+iV$ and recalling \eqref{eq: the conspirators},
$$
\cB_\mu:=(1+\mu^2)U+2\mu\oV_q(V).
$$
As in \cite[p. 1076]{CM} we conclude that, if $\alpha\in\R^n$ is arbitrary, $\sk{\cB_\mu(x)\alpha}{\alpha}dm(x)$ is a positive measure on $\Omega$, therefore for all $\alpha\in\R^n$ and $\mu\in\R$ we have
$$
\sk{\cB_\mu(x)\alpha}{\alpha}\geq0 
\hskip 30pt
\text{a.e. }x\in\Omega.
$$

Write 
$
\cS_\mu=\cS_\mu(\alpha):=\mn{x\in\Omega}{\sk{\cB_\mu(x)\alpha}{\alpha}\geq0}.
$
We just saw that $m(\Omega\backslash\cS_\mu)=0$ for any $\mu\in\R$. Define also
$
\cS=\cS(\alpha):=\mn{x\in\Omega}{|\sk{\oV_q(V)(x)\alpha}{\alpha}|\leq\sk{U(x)\alpha}{\alpha}}.
$
Clearly,
$$
\cS=\bigcap_{\mu\in\R}\cS_\mu=\bigcap_{\mu\in\Q}\cS_\mu,
$$
therefore
$
m(\Omega\backslash\cS)=0.
$
We proved that, for every $\alpha\in\R^n$,
\begin{equation}
\label{eq: cas solo}
|\sk{\oV_q(V)(x)\alpha}{\alpha}|\leq\sk{U(x)\alpha}{\alpha}
\hskip 30pt
\text{a.e. }x\in\Omega.
\end{equation}
Observe that, again by \eqref{eq: the conspirators}, 
$$
 \sk{\oV_q(V)(x)\alpha}{\alpha}
=\frac{p-2}{2\sqrt{p-1}}\sk{V(x)\alpha}{\alpha}.
$$
So \eqref{eq: cas solo} means that, for every $\alpha\in\R^n$,
$$
|p-2||\sk{V(x)\alpha}{\alpha}|\leq2\sqrt{p-1}\sk{U(x)\alpha}{\alpha}
\hskip 30pt
\text{a.e. }x\in\Omega,
$$
which by Proposition~\ref{p: trazi se morricone} implies that $\Delta_p(A_{\mathsf s})\geq0$.
\end{proof}

Recall that the condition $\div\, W^{(k)}=0$ was introduced on page \pageref{s: svoju rabotu ceram} and that it was earlier studied in \cite{ABBO} and \cite{O}.

\begin{lem}
\label{l: El Hob}
Let $W\in L^{\infty}(\Omega\rightarrow\R^{n,n})$ be antisymmetric. Assume that, for every $k\in\{1,\hdots,n\}$, we have $\div\, W^{(k)}=0$ in the distributional sense. Then 
$$
\int_\Omega\sk{W\nabla f}{\nabla g}_{\C^n}=0
\hskip 40pt
\forall\, f,g\in H_0^1(\Omega)\,.
$$
\end{lem}
\begin{proof}
Clearly it suffices to assume that $f,g\in C_c^\infty(\Omega)$. Write $W=[w_{jk}]_{j,k}$. 
Let $\Lambda_w$ denote the distribution corresponding to the function $w\in L^{\infty}(\Omega)$. 
We recollect the basic properties of calculus with distributions \cite[Chapter 6]{R}. Then 
$$
\int_\Omega\sk{W\nabla f}{\nabla g}_{\C^n}
=\sum_{j,k}\Lambda_{w_{jk}}(\partial_{x_k}f\cdot\pd_{x_j}\bar g)
=-\sum_k\sum_j(\pd_{x_j}\Lambda_{w_{jk}})(\partial_{x_k}f\cdot\bar g)=0\,.
$$
Antisymmetry of $W$ was used for the middle equality.
\end{proof}

We remind the reader of the notation
$$
\cD_p(\gota)=\Mn{u\in H_0^1(\Omega)}{|u|^{p-2}u\in H_0^1(\Omega)}.
$$
The $L^{p}$-dissipativity of the form \eqref{eq: miruna} is closely related to our $H_{F_p}^A$, as we show next.

\begin{prop}
\label{p: kao na strazi cuvam noc od budnih}
Suppose that $A\in
L^{\infty}(\Omega\rightarrow\C^{n,n})$, $p\geq2$ and $f\in\cD_p(\gota)$. Then
\begin{equation}
\label{eq: da li ti se svidjam}
p\,\Re\sk{A\nabla f}{\nabla(|f|^{p-2}f)}_{\C^n}
=H_{F_p}^A[f;\nabla f]\,.
\end{equation}
When $\div(\Im A)_{\sf a}^{(k)}=0$ for all $k\in\{1,\hdots,n\}$  
then also
\begin{equation}
\label{eq: div=0}
p\,\Re\int_{
\Omega}\sk{A\nabla f}{\nabla(|f|^{p-2}f)}_{\C^n}
=\int_{\Omega} H_{F_p}^{A_{\sf s}}[f;\nabla f]\,.
\end{equation}
\end{prop}

\begin{proof}[Sketch of the proof.]
Define $G(\zeta):=|\zeta|^{p-2} \zeta$ for $\zeta\in\C$. Fix a cutoff function $\eta\in C^{\infty}_{c}(\C)$ such that $\eta\equiv1$ on $\{|\zeta|\leq 1\}$ and $\eta\equiv0$ on $\{|\zeta|>2\}$, and consider $[\eta(\cdot/R) G]\circ f$. To this function apply a version of the chain rule for weak derivatives \cite[Theorem~2.1.11]{Z} adapted to complex functions and  take the limit as $R\rightarrow\infty$. This leads to the identity 
\begin{equation}
\label{eq: osta san inkantan}
\nabla\left(|f|^{p-2}f\right)=\frac p2|f|^{p-2}\left(\nabla f+\widehat p\,e^{2i\arg f}\nabla\bar f\right).
\end{equation}
In order to arrive at \eqref{eq: da li ti se svidjam} it now suffices to recall Lemma~\ref{l: tatac}.

Now we prove the last statement of the proposition. By \eqref{eq: nikkor 24mm 2.8} and \eqref{eq: ichimonji},
$$
H_{F_p}^A=H_{F_p}^{A_{\sf s}}+H_{F_p}^{i(\Im A)_{\sf a}}.
$$
By the part of the proposition proven so far,
\begin{equation}
\label{eq: zurba}
\aligned
\int_{\Omega} 
H_{F_p}^{i(\Im A)_{\sf a}}[f;\nabla f] 
& = -p\,\Im\int_{\Omega}\sk{(\Im A)_{\sf a}\nabla f}{\nabla(|f|^{p-2}f)}_{\C^n}.
\endaligned
\end{equation}
The last integral turns out to be purely imaginary. Lemma~\ref{l: El Hob} implies that it is zero.
\end{proof}

\begin{rem}
Previous versions of Proposition~\ref{p: kao na strazi cuvam noc od budnih} featured simple algebraic conditions which permitted \eqref{eq: div=0}, yet were stronger than the zero-divergence condition. A question by Michael Cowling and a discussion with Gian Maria Dall'Ara following that question eventually led us to replace earlier algebraic conditions in favour of the condition on divergence. We thank both of our colleagues.
\end{rem}

\subsection{Proof of Theorem~\ref{t: Aufnahmebetriebsartenwaehler}}
\label{s: jozic}

It suffices to treat the case $p\geq2$. Indeed, contractivity of $P^{A}_{t}$ on $L^{p}(\Omega)$ is equivalent to the contractivity of $P^{A^{*}}_{t}$ on $L^{q}(\Omega)$, where $1/p+1/q=1$; on the other hand, nonnegativity of $\Delta_p(A)$ or $\Delta_p(A_{\mathbf s})$ is preserved under changing either $p\leftrightarrow q$ or $A\leftrightarrow A^*$ (see Proposition~\ref{p: kaminszki} and Corollary~\ref{c: garrison}).

\medskip

{\it(\ref{Imshi})} $\Rightarrow$ {\it(\ref{Wara})}: 
Follows from Theorem~\ref{t: dzigerica}, Proposition~\ref{p: kao na strazi cuvam noc od budnih} and Proposition~\ref{p: kaminszki}.

\medskip
{\it(\ref{eq: Kidbuhom})} $\Rightarrow$ {\it(\ref{Wara})} 
{\it if $\div(\Im A)_{\sf a}^{(k)}=0$ for all $k\in\{1,\hdots,n\}$}:
The same.

\medskip
{\it(\ref{Wara})} 
$\Rightarrow$ {\it(\ref{eq: Kidbuhom})}: 
Combine Theorem~\ref{t: dzigerica} with Proposition~\ref{p: that's the way}.

\medskip
{\it(\ref{eq: Kidbuhom})} $\not\Rightarrow$ {\it(\ref{Wara})} {\it if $\div(\Im A)_{\sf a}^{(k)}\ne0$ for some $k\in\{1,\hdots,n\}$}:
The simplest case of such $A\in\cA(\Omega)$ occurs when $n=2$. 
In that case $(\Im A)_{\sf a}=wR$ for some $w\in L^{\infty}(\Omega)$ and 
\begin{equation*}
\label{eq: nessun dorma}
R=\left[
\begin{array}{lr}
& -1\\
1 &
\end{array}
\right],
\end{equation*}
and nonzero divergence is equivalent to saying that $w$ is nonconstant on each connected component of $\Omega$.

So take $\Omega=\R^2$, a nonconstant $w\in L^{\infty}(\R^2)$ and define
$A(x)=I_{\R^2}+iw(x)R$ for $x\in\R^2$. 
By Lemma~\ref{l: adventni koledar 2018}, $A\in\cA(\R^2)$ if and only if $\nor{w}_{\infty}<1$. Since $A_{\mathbf s}=I_2$ is real, we have $\Delta_p(A_{\mathbf s})>0$, so {\it(\ref{eq: Kidbuhom})} is fulfilled.

Take $p>2$, real functions $r,\f\in C^\infty(\R^2)$ such that $r\geq0$ and $f:=re^{i\f}$ belongs to $\cD_p(\gota)$. Then
\begin{equation}
\label{eq: 7.4}
\Re\sk{A\nabla f}{\nabla(|f|^{p-2}f)}_{\C^2}
=
(p-1) r^{p-2}|\nabla r|^2
+
 r^p|\nabla\f|^2
+w\,
\oJ(r^p,\f).
\end{equation}
Here $\oJ(F,G)$ is the Jacobian determinant of a pair of real (weakly) differentiable functions $F,G$ on $\R^2$:
$$
\oJ(F,G):=
\bigg|
\begin{array}{ll}
\pd_xF & \pd_yF \\
\pd_xG & \pd_yG 
\end{array}
\bigg|
=
\pd_xF\cdot \pd_yG - \pd_yF\cdot \pd_xG
=\sk{R\nabla F}{\nabla G}.
$$ 
We want to find $w,r,\f$ such that \eqref{eq: 7.4} will be negative after integration. We first choose $w(x)=-\gamma\chi_E(x)$, where $\gamma\in(0,1)$ and $E\subset\R^2$ is a Borel set with $m(E)m(E^c)>0$. We are trying to minimize the integral of \eqref{eq: 7.4}, which suggests arranging $r,\f,E$ so that
\begin{equation}
\label{eq: disapproval}
E:={\rm supp\,}\oJ(r^p,\f)_+={\rm supp\,}\oJ(r,\f)_+,
\end{equation} 
where $F_+:=\max\left\{F,0\right\}$ is the positive part of the real function $F$.

Suppose furthermore that we can write $r=e^{\sigma/p}$ for some real function $\sigma\in C^\infty(\R^2)$. 
With \eqref{eq: disapproval} in mind we obtain
\begin{equation}
\label{eq: no absolution}
\Re\int_{\R^2}\sk{A\nabla f}{\nabla(|f|^{p-2}f)}_{\C^2}
=
\int_{\R^2} 
e^{\sigma}\left(
\frac
{|\nabla\sigma|^2}{pq}
+
|\nabla\f|^2
-
\gamma\oJ(\sigma,\f)_+
\right).
\end{equation}

A quick argument shows that for {\it any} pair $\sigma,\f$ for which the set $E$ from \eqref{eq: disapproval} has positive measure, our problem has a solution for large $p$. Indeed, replace $\f$ in \eqref{eq: no absolution} by $\mu\f$ with positive $\mu$, consider $\mu\searrow0$ and finally choose large $p$.

This pattern leads to the following concrete example: 
\begin{itemize}
\item
$E=\mn{(x_1,x_2)\in\R^2}{|x_1|\geq|x_2|}$
\item
$0<\gamma<1$
\item
$A(x)=I_{\R^2}-i\gamma\chi_E(x)R$ for $x\in\R^2$
\item
${\displaystyle f(x_1,x_2)=e^{-\pi(x_1^2+x_2^2)-ipx_1x_2}}$.
\end{itemize}
Then we verify that for $p>2\pi(\pi+\sqrt{\pi^2-1})\approx 38.45$ there exists $\gamma$ close to 1 such that 
$\Re\int_{\R^2}\sk{A\nabla f}{\nabla(|f|^{p-2}f)}_{\C^2}<0$. Hence {\it(\ref{Wara})} is not fulfilled, by Theorem~\ref{t: dzigerica}.

Finally observe that, since $\chi_E$ is a pointwise limit of $C^\infty$ functions with values in $[0,1]$, we see from \eqref{eq: 7.4} that we have counterexamples from $\cA(\R^n)$ with $C^\infty$ entries.
\qed

\begin{rem}
Matrices of the form $A(x)=I_{\R^2}+iw(x)R$ were used earlier by Cialdea and Maz'ya, see \cite[Examples 1 and 2]{CM}, however to give counterexamples to phenomena different from the one just considered in the proof of Theorem~\ref{t: Aufnahmebetriebsartenwaehler}. 
\end{rem}

\subsection*{Discussion}
As seen from the last step of the proof of Proposition~\ref{p: kao na strazi cuvam noc od budnih}, what really matters for obtaining \eqref{eq: div=0} is to find a (preferably algebraic) condition on $(\Im A)_{\sf a}$ and $p$ under which   
\begin{equation}
\label{eq: blag}
\int_{\Omega}\sk{(\Im A)_{\sf a}\nabla f}{\nabla(|f|^{p-2}f)}_{\C^n}=0
\hskip40pt
\forall f\in\cD_p(\gota)\,.
\end{equation}
This makes us wonder whether the characterization {\it(\ref{Wara})} $\Leftrightarrow$ {\it(\ref{eq: Kidbuhom})} from Theorem~\ref{t: Aufnahmebetriebsartenwaehler} holds {\sl only} if \eqref{eq: blag} holds. The counterexample which establishes the last part of Theorem~\ref{t: Aufnahmebetriebsartenwaehler} was in fact chosen as the simplest one for which \eqref{eq: blag} {\sl fails}. Moreover, we wonder whether in the abscence of \eqref{eq: blag} we might have {\it(\ref{Wara})} $\Leftrightarrow$ {\it(\ref{Imshi})}. 
In light of this, let us further discuss the setup that led to the example at the end of the proof of Theorem~\ref{t: Aufnahmebetriebsartenwaehler}. 

Let $\Omega\subset\R^2$ be open and $E\subset\Omega$ such that $m(E)m(\Omega\backslash E)>0$. Suppose that the functions $r,\f\in C^1(\Omega)$ are real and such that $r\geq0$ and $f:=re^{i\f}\in\cD_p(\gota)$. As before, let $\gamma\in(0,1)$ and  $A(x):=I_{\R^2}-i\gamma\chi_E(x)R$. Further mimic the previous example by assuming that $r=e^{\sigma/p}$ for some real $\sigma\in C^1(\Omega)$ and $\psi:=\sqrt{pq}\f$. Then we get
\begin{equation}
\label{eq: paspalj}
pq\,\Re\int_{\Omega}\sk{A\nabla f}{\nabla(|f|^{p-2}f)}
=
\int e^{\sigma}
\left(
|\nabla\sigma|^2
+|\nabla\psi|^2
-\gamma\sqrt{pq}\,
\oJ(\sigma,\psi)\chi_E\right).
\end{equation}
It seems that the threshold for uniform positivity of \eqref{eq: paspalj} is $\gamma\sqrt{pq}=2$. Indeed, write 
$\gamma\sqrt{pq}=2+\delta$ for some $\delta\in\R$. We quickly see that, for $\delta\leq0$, the integrals above are always nonnegative. On the other hand, the right-hand side of \eqref{eq: paspalj} is
$$
\int_Ee^\sigma\left[(\sigma_x-\psi_y)^2+(\sigma_y-\psi_x)^2\right]
-\delta
\int_Ee^\sigma\left(\sigma_x\psi_y-\sigma_y\psi_x\right)
+\int_{E^c}e^\sigma\left(\sigma_x^2+\sigma_y^2+\psi_x^2+\psi_y^2\right),
$$
so if $\delta>0$ is sufficiently large then \eqref{eq: paspalj} may be negative. It is natural to ask whether it could be negative for {\it any} $\delta>0$ (and a suitable choice of $E,\sigma,\psi$). This may be viewed as a variational problem. It seems to us that the extremizers could be approximate solutions to the Cauchy--Riemann system. Observe also that, since $m(E)>0$, Lemma~\ref{l: adventni koledar 2018} gives $\Delta_p(A)=1-\sqrt{\wh p^2+\gamma^2}$, therefore $\gamma\sqrt{pq}\leq2$ (or $\delta\leq0$) is nothing but $\Delta_p(A)\geq0$.

Simplifying by taking $\Omega=\R^2$ and functions from the Schwartz class $\cS(\R^2)$ instead of $\cD_p(\gota)$, and then writing $\rho=e^{\sigma/2}$ and $\omega=\psi/2$, we reformulate the question as follows:

Find $E\subset\R^2$ with $m(E)m(E^c)>0$, such that for any $\delta>0$ there exist functions $\rho>0$, $\omega$ real, for which $\rho e^{i\omega}\in\cS(\R^2)$ and
$$
\int_{\R^2}\left|\nabla(\rho e^{i\omega})\right|^2<(1+\delta)\int_E\oJ(\rho^2,\omega).
$$
This would give an example of an accretive matrix for which the $L^{p}$ contractivity of the semigroup is for any $p>1$ characterized by $\Delta_p(A)\geq0$ and not $\Delta_p(A_{\mathsf s})\geq0$.

\section*{Appendix: 
Approximation with regular-coefficient operators}
\label{s: sargija}

\renewcommand{\thesection}{\Alph{section}}
\setcounter{section}{1}
\setcounter{thm}{0}

For any $\phi\in(0,\pi/2)$ denote 
$
\phi^*:=\pi/2-\phi.
$ 
Write $I=[0,1]$. Consider a one-parameter family  $\mn{A_s}{s\in I}$ of $n\times n$ complex matrix functions. 
Recall the notation \eqref{eq: Don Giovanni}. Suppose that there exist $\lambda,\Lambda>0$ such that
\begin{enumerate}[(H1)]

\item 
\label{I: H1}
$A_s\in\cA_{\lambda,\Lambda}(\R^n)$ for all $s\in I$;

\item 
\label{I: H2}
${\displaystyle\lim_{s\rightarrow 0}\nor{A_s(x)-A_0(x)}_{\cB(\C^n)}=0}$  for a.e. $x\in\R^{n}$.
\end{enumerate}
Denote by $L_{s}=L_{A_{s}}$ the divergence-form operator on $L^{2}(\R^{n})$ associated with $A_{s}$. Recall from Section~\ref{s: njuska} that each $L_{s}$ is sectorial with sectoriality angle
$
\omega(L_{s})\leq\theta_{\lambda,\Lambda},
$
where $\theta_{\lambda,\Lambda}=\arccos(\lambda/\Lambda)$.
Set $\cH_0=L^2(\R^n)$ and $\cH_1=L^2(\R^n;\C^n)$. 
For $\zeta\in\C\setminus \bS_{\theta_{\lambda,\Lambda}}$ define the operators
$T_s(\zeta):
\cH_0\rightarrow\cH_1$ 
and
$S_s(\zeta):C^{\infty}_{c}(\R^{n};\C^{n})\rightarrow \cH_{0}$ by 
$$
\aligned
T_{L_{s}}(\zeta)&:=\nabla(\zeta-L_s)^{-1}\\
\quad S_{L_{s}}(\zeta)&:=(\zeta-L_s)^{-1}{\rm div}.
\endaligned
$$

\begin{lem}[see \text{ \cite[Section~1.2, Proposition~1]{AT}}]
\label{l: sect app}
Assume (H1) holds. Then for every $\theta\in(\theta_{\lambda,\Lambda},\pi/2)$ there exists 
$C=
C(\lambda,\Lambda,\theta)>0$ 
such that
$$
\aligned
|\zeta|^{1/2}\|
 T_{L_{s}}(\zeta)f\|_{\cH_1}&\leq C\|f\|_{\cH_0};\\
 |\zeta|^{1/2}\|
 S_{L_{s}}(\zeta)F\|_{\cH_0}&\leq C\|F\|_{\cH_1};\\
 \|\nabla
S_{L_{s}}(\zeta)F\|_{\cH_1}&\leq C\|F\|_{\cH_1};
\endaligned
$$ 
for all $s\in I$, $f\in L^2(\R^{n})$, $F\in C^{\infty}_{c}(\R^{n};\C^{n})$ and $\zeta\in \C\setminus\bS_{\vartheta}$. 
Moreover, the very same estimates hold with $L_{s}$ replaced by $L^{*}_{s}$.
\end{lem}

\begin{proof}
Let us start with the first inequality.
The condition (H1) implies that
$$
\aligned
\lambda\|\nabla(\zeta-L_{s})^{-1}f\|_{\cH_1}^{2}
&\leq\Re\!\sk{ A_s\nabla (\zeta-L_{s})^{-1}f}{\nabla(\zeta-L_{s})^{-1}f}_{\cH_1}\\
&= \Re\!\sk{L_{s}(\zeta-L_{s})^{-1}f}{(\zeta-L_{s})^{-1}f}_{\cH_0}\\
&= \Re\!\sk{\zeta(\zeta-L_{s})^{-1}f}{(\zeta-L_{s})^{-1}f}_{\cH_0}-\Re\!\sk{f}{(\zeta-L_{s})^{-1}f}_{\cH_0}.
\endaligned
$$
The first inequality of the lemma now follows from the sectoriality of $L_{s}$. Since $L^{*}_{s}=L_{A^{*}_{s}}$ and $A^{*}_{s}\in\cA_{\lambda,\Lambda}(\R^n)$, the same estimate clearly holds with $L_{s}$ replaced by $L_{s}^*$. The second inequality now follows from duality by using the identity
$$
\sk{S_{L_{s}}(\zeta)F}{f}_{\cH_{0}}=\sk{F}{T_{L^{*}_{s}}(\bar\zeta)f}_{\cH_{1}},\quad F\in C^{\infty}_{c}(\R^{n};\C^{n}),\quad f\in \cH_{0}.
$$

We now prove the third inequality. See \cite[Section~0.2, Proposition~2]{AT}. By the accretivity condition of $L_{s}$ we have, similarly as before,
$$
\aligned
\lambda&\|\nabla(\zeta-L_{s})^{-1}{\rm div}F\|^{2}_{\cH_1}
\\
&\hskip 20pt
\leq\Re\sk{\zeta(\zeta-L_{s})^{-1}{\rm div}F}{(\zeta-L_{s})^{-1}{\rm div}F}_{\cH_0}-\Re\sk{{\rm div}F}{(\zeta-L_{s})^{-1}{\rm div}F}_{\cH_0}\\
&\hskip 20pt
=(\Re
\zeta)\nor{S_{L_{s}}(\zeta)F}_{\cH_0}^2
-\Re\sk{F}{\nabla S_{L_{s}}(\zeta)F}_{\cH_1}\\
&\hskip 20pt
\leq |\zeta|\cdot\nor{S_{L_{s}}(\zeta)F}^{2}_{\cH_0}+\nor{F}_{\cH_1}\nor{\nabla S_{L_{s}}(\zeta)F}_{\cH_1}.
\endaligned
$$
The second estimate of this lemma -- which we have already proven -- gives
$$
\lambda\,\frac{\nor{\nabla S_s(\zeta)F}^2_{\cH_1}}{\nor{F}^2_{\cH_1}}
\leq \frac{C}\lambda+\frac{\nor{\nabla S_s(\zeta)F}_{\cH_1}}{\nor{F}_{\cH_1}}\,.
$$
This quickly gives 
$\lambda\nor{\nabla S_s(\zeta)F}_{\cH_1}/\nor{F}_{\cH_1}\leq1+C$ 
for all $s\in I$ and $\zeta\in \C\setminus\bS_{\vartheta}$.
\end{proof}

\begin{rem}
The preceding lemma implies that for $\zeta\in\C\setminus\bS_{\theta_{\lambda,\Lambda}}$ the operators $S_{L_{s}}(\zeta)$ and $\nabla S_{L_{s}}(\zeta)$ admit unique extensions to  bounded operators $\cH_1\rightarrow\cH_0$ and $\cH_{1}\rightarrow\cH_{1}$, respectively. Moreover,  $S^{*}_{L_{s}}(\zeta)=T_{L^{*}_{s}}(\bar \zeta)$. 
\end{rem}

The following auxiliary result is mentioned in \cite[p. 56]{AT}. We omit the proof.

\begin{lem}
\label{l: pert resolvent}
For every $s\in I$ , $\zeta\in\C\setminus \bS_{\vartheta^{*}_{2}}$ and $f\in L^{2}(\R^{n})$ we have
$$
\left(\zeta-L_0\right)^{-1}f-\left(\zeta-L_{s}\right)^{-1}f=S_s(\zeta)\circ M_{A_0-A_s}\circ T_0(\zeta)f,
$$
where $M_{A_0-A_s}:\cH_{1}\rightarrow\cH_{1}$ denotes the operator of multiplication by $A_0-A_s$.
\end{lem}

\begin{lem}
\label{c: appregop}
Fix $f\in L^{2}(\R^{n})$.
Assuming (H1) and (H2), for every $z\in\bS_{\theta^{*}_{\lambda,\Lambda}}$ we have
$$
\nabla e^{-zL_{s}}f\rightarrow \nabla e^{-zL_0}f\,
$$
in $L^{2}(\R^{n};\C^{n})$, as $s\rightarrow 0$.
\end{lem}

\begin{proof}
Let $\theta^{*}\in(0,\pi/2)$ be such that $|\arg z|<\theta^{*}<\theta^{*}_{\lambda,\Lambda}$.
Fix $\delta>0$ and denote by $\gamma$ the positively oriented 
boundary of $\bS_{\vartheta}\cup\mn{\zeta\in\C}{|\zeta|<\delta}$.
For $s\in I$ and $\zeta\in \gamma$, define
$$
U(s,\zeta)=S_{L_{s}}(\zeta)\circ M_{A_0-A_s}\circ T_{L_{0}}(\zeta).
$$
Then by \cite[Lemma~2.3.2]{Haase} and Lemma~\ref{l: pert resolvent},
$$
\nabla e^{-zL_0}f-\nabla e^{-zL_{s}}f=\frac{1}{2\pi i}\int_{\gamma}e^{-z\zeta}\nabla U(s,\zeta)f\,d\zeta.
$$
Therefore, by Lemma~\ref{l: sect app}, 
$$
\aligned
\norm{\nabla e^{-zL_0}f-\nabla e^{-zL_{s}}f}{2}&
\leqsim 
\int_{\gamma}|e^{-z\zeta}|\cdot\norm{\nabla U(s,\zeta)f}{2}\,d |\zeta|\\
&\leqsim
\int_{\gamma}e^{-\Re(z\zeta)}\norm{M_{A_0-A_s}T_0(\zeta)f}{2}\,d |\zeta|\,.
\endaligned
$$
By (H1), (H2), Lemma~\ref{l: sect app} and the Lebesgue dominated convergence theorem 
we obtain
$$
\lim_{s\rightarrow 0}\norm{M_{A_0-A_s}T_0(\zeta)f}{2}=0,\quad \forall \zeta\in\gamma.
$$ 
Moreover, by Lemma~\ref{l: sect app} again, we have
$
\norm{M_{A_0-A_s}T_0(\zeta)f}{2}\leqsim|\zeta|^{-1/2}\norm{f}{2}.
$
The desired conclusion 
now follows from the Lebesgue dominated convergence theorem. 
\end{proof}

\setcounter{subsection}{0}

\subsection{Convolution with approximate identity}
\label{s: BWV 1056 7. Allegro moderato}
Here we give an example of when (H\ref{I: H1}) and (H\ref{I: H2}) may be fulfilled. 
 
Let $k:\R^n\rightarrow[0,1]$ be a radial, non negative, compactly supported $C^\infty$ function whose integral over $\R^n$ equals one. For $\e>0$ define 
$k_\e(x):=\e^{-n}k(x/\e)$.
If $A\in\cA(\R^n)$ we define $A_\e:=A*k_\e$, meaning that $A_\e$ is a matrix function whose entries are $a_{ij}*k_\e$. Recall the notation \eqref{eq: Don Giovanni} and \eqref{eq: kabuto}.

\begin{lem}
\label{l: stevens}
For every $A\in\cA_{\lambda,\Lambda}(\R^n)$ and $A_\e$ as above and $p>1$ we have:
\begin{enumerate}[ i)]
\item
\label{l: igor tukaj celi dan u racnalniku gasi}
${\displaystyle\lim_{\varepsilon\rightarrow0}\nor{A_\e(x)-A(x)}_{\cB(\C^n)}=0}$ for a.e. $x\in\R^n$;
\item
\label{l: jz bom zeu lestve}
$A_\e\in\cA_{\lambda,\Lambda}(\R^n)$ for all $\e>0$;
\item
\label{l: na to bom splezau, ves}
$\nu(A_\e)\leq\nu(A)$ for all $\e>0$;
\item
\label{l: A5 item iii}
$\Delta_{p}(A_{\varepsilon})\geq \Delta_{p}(A)$
and 
${\displaystyle\Delta_{p}(A)=\lim_{\varepsilon\rightarrow 0}\Delta_{p}(A_{\varepsilon})}$;
\item
\label{severus}
$\mu(A_\e)\geq\mu(A)$
and
${\displaystyle\mu(A)=\lim_{\e\rightarrow0}\mu(A_\e)}$.
\end{enumerate}
\end{lem}

\begin{proof}
Since pointwise convergence in $\cB(\C^{n})$ implies convergence in norm, 
for proving item {\it \ref{l: igor tukaj celi dan u racnalniku gasi})} it is enough to show that for a.e. $x\in\R^n$ we have $A_{\varepsilon}(x)\xi\rightarrow A(x)\xi$ for all $\xi\in\C^{n}$. This is true because each $a_{ij}$ belongs to $L^{\infty}(\R^{n})\subset L^{1}_{{\rm loc}}(\R^{n})$ so that $(a_{ij}*k_\e)(x)$  tends to $a_{ij}(x)$ for a.e. $x\in\R^n$, as $\e\rightarrow 0$. This proves {\it \ref{l: igor tukaj celi dan u racnalniku gasi})}.

Clearly, for all $\xi,\eta\in\C^n$,
$$
\sk{A_\e(x)\xi}{\eta}=\int_{\R^n}\sk{A(y)\xi}{\eta}k_\e(x-y)\,dy\,,
$$
which directly yields {\it\ref{l: jz bom zeu lestve})}, {\it\ref{l: na to bom splezau, ves})} and the inequality in {\it\ref{l: A5 item iii})}. By definition \eqref{eq: kabuto} and the continuity of $x\mapsto A_{\varepsilon}(x)$ we get
$\Re\sk{A_{\varepsilon}(x)\xi}{\cI_{p}\xi}\geq \Delta_{p}(A_{\varepsilon})|\xi|^{2}$ for all $x\in\R^{d},\xi\in\C^{d}$ and $\varepsilon>0$. 
The equality in {\it\ref{l: A5 item iii})} now follows from item {\it\ref{l: igor tukaj celi dan u racnalniku gasi})}.

Finally, let us prove {\it\ref{severus})}. 
We will repeatedly be applying Proposition~\ref{p: reci dje si}. 
Choose any $p>1$ such that $|1-2/p|\leq\mu(A)$. This implies $\Delta_p(A)\geq0$, thus by {\it\ref{l: A5 item iii})} also $\Delta_p(A_\e)\geq0$ for any $\e>0$, hence $|1-2/p|\leq\mu(A_\e)$. We know that $\mu(A)\leq1$ always, therefore $|1-2/p|$ can get arbitrarily close to $\mu(A)$. This proves the inequality in {\it\ref{severus})}.
Now we prove the equality. If $\mu(A)=1$, the part just proven gives $\mu(A_\e)=1$, so this case is settled. If $\mu(A)<1$, let $p>1$ be such that $\mu(A)=|1-2/p|$. Part {\it\ref{l: A5 item iii})} then gives $\Delta_p(A_\e)\searrow0$ as $\e\rightarrow0$. Since Proposition~\ref{p: reci dje si} and part {\it\ref{l: jz bom zeu lestve})} imply 
$$
\frac{\Delta_p(A_\e)}{\Lambda}
\leq
\mu(A_\e)-|1-2/p|
\leq
\frac{\Delta_p(A_\e)}{\lambda},
$$
this completes the proof of the lemma. 
\end{proof}

\bigskip
\footnotesize
\noindent\textit{Acknowledgments.}
The first author was partially supported by the ``National Group
for Mathematical Analysis, Probability and their Applications'' (GNAMPA-INdAM).
The second author was partially supported by the Ministry of Higher Education, Science and Technology of Slovenia (research program Analysis and Geometry, contract no. P1-0291). He is deeply grateful to Pascal Auscher for their conversations during the 2011 Oberwolfach conference ``Real Analysis, Harmonic Analysis and Applications'' and to El Maati Ouhabaz for several discussions which took place in Bordeaux and Novo mesto in 2014 and 2015, respectively. 

The authors would like to thank the referees of this paper for their valuable comments which improved its presentation.

\end{document}